\newcommand\sectionpage{}
\renewcommand\sectionpage{\newpage}
\newtheorem{lem}{Lemma}[section]
\newtheorem{cor}[lem]{Corollary}
\newtheorem{prop}[lem]{Proposition}
\newtheorem{thm}[lem]{Theorem}
\theoremstyle{definition}
\newtheorem{exam}[lem]{Example}
\newtheorem{construction}[lem]{Construction}
\newtheorem{qn}[lem]{Question}
\numberwithin{equation}{section}
\numberwithin{table}{section}
\numberwithin{figure}{section}
\newcommand\cl{\operatorname{cl}}
\newcommand\bbF{\mathbb{F}}
\newcommand\bbQ{\mathbb{Q}}
\newcommand\bbR{\mathbb{R}}
\newcommand\bbZ{\mathbb{Z}}
\newcommand\HP{\mathrm{HP}}
\newcommand\Hc{\mathrm{H}}
\newcommand\cF{{\mathcal F}}
\newcommand\cI{{\mathcal I}}
\newcommand\cL{{\mathcal L}}
\newcommand\cP{{\mathcal P}}
\newcommand\fG{\mathfrak{G}}
\renewcommand\ell{l}
\newcommand\bF{\mathbf{F}}
\newcommand\bK{\mathbf{K}}
\newcommand\PR{\ensuremath{\mathbb{PR}}}
\newcommand\bV{\mathbf{V}}
\newcommand\PP{\Pi}
\newcommand\h{\mathbf{h}}
\newcommand\hi{\mathbf{h}^\infty}
\renewcommand\setminus\smallsetminus
\newcommand\inv{^{-1}}
\begin{document}
\pagestyle{myheadings}
\markboth{\sc Fl\'orez and Zaslavsky}{\sc Projective Rectangles: Harmonic Conjugation}


\thispagestyle{empty}

\title{Projective Rectangles: Harmonic Conjugation}

\author{Rigoberto Fl\'orez}
\thanks{Fl\'orez's research was partially supported by a grant from The Citadel Foundation.}
\address{Dept.\ of Mathematical Sciences, The Citadel, Charleston, South Carolina 29409}
\email{\tt rigo.florez@citadel.edu}

\author{Thomas Zaslavsky}
\address{Dept.\ of Mathematics and Statistics, Binghamton University, Binghamton, New York 13902-6000}
\email{\tt zaslav@math.binghamton.edu}

\date{\today}

\begin{abstract}
A projective rectangle is like a projective plane that has different lengths in two directions.  We develop harmonic conjugation in projective rectangles.  We construct projective rectangles in some harmonic matroids (matroids where harmonic conjugation is defined on every collinear point triple), such as Desarguesian projective planes of finite characteristic, by harmonic conjugation from extended lift matroids based on finite fields.  Similar results follow for countable fields with characteristic $0$.  We also show that projective rectangles are almost harmonic matroids.
\end{abstract}

\subjclass[2010]{Primary 51A99; Secondary 05B35, 05C22, 51A20, 51E26}

\keywords{Projective rectangle; incidence geometry; Pasch axiom; harmonic conjugation; conjugate sequence; Desargues's theorem}

\maketitle

\thispagestyle{empty}

\tableofcontents

\sectionpage\section {Introduction}

We generalize the concept of a projective plane to a projective rectangle, which is like a projective plane that has different sizes in two directions.  Projective rectangles include projective planes as trivial examples; more broadly they are substructures found in harmonic matroids (matroids in which harmonic conjugation is well defined), in particular in full algebraic matroids, which is where we first encountered them.

Nontrivial projective rectangles have many properties not generally found in projective planes. For example, they have remarkable internal structure, as we showed in \cite{pr1} and summarize in Section \ref{background}, and they are almost, but apparently not quite, harmonic matroids (see Theorem \ref{thm:harmonicconjugation}).  
In this paper we develop the line of thought that originally inspired projective rectangles, namely the first author's exploration of harmonic conjugation as applied to a certain matroid we call $L_p$ (a type of complete lift matroid to be defined later in this article, closely related to the well-known Reid cycle matroid).  We develop a theory of harmonic conjugation in harmonic matroids (which include Desarguesian and Moufang projective planes as well as full algebraic matroids) and show that multidimensional generalizations of $L_p$,  generate projective rectangles by means of iterated harmonic conjugation.  Our main conclusion is the existence of nontrivial projective rectangles in some harmonic matroids.  We also show that a multidimensional generalization of the Reid cycle matroid is a small matroid such that, when embedded in a harmonic matroid, it generates a projective rectangle.

We wish to acknowledge the deep influence of Bernt Lindstr\"om's ground-breaking work on abstract harmonic conjugation \cite{ldt, lhc}, which, as extended by the first author in \cite{rfhc}, inspired this whole project.

\sectionpage\section {Background about projective rectangles}\label{background}

An \emph{incidence structure} is a triple $(\cP,\mathcal{L},\mathcal{I})$ of sets with $\mathcal{I}
\subseteq \cP \times \mathcal{L}$.  The elements of $\cP$
are \emph{points}, the elements of $\mathcal{L}$ are \emph{lines}.
A point $p$ and a line $l$ are \emph{incident} if $(p,l) \in \mathcal{I}$.  A set $P$ of points is said to be
\emph{collinear} if all points in $P$ are in the same line.  We say that two
distinct lines \emph{intersect in a point} if they are incident with the same point.

A \emph{projective rectangle} is an incidence structure $(\cP,\mathcal{L},\mathcal{I})$ that satisfies the following axioms:

\begin{enumerate} [({A}1)]
\item \label{Axiom:A1}   Every two distinct points are incident with exactly one line.

\medskip

\item \label{Axiom:A2} There exist four points with no three of them collinear.

\medskip

\item \label{Axiom:A3}  Every line is incident with at least three distinct points.

\medskip

\item \label{Axiom:A4}  There is a \emph{special point} $D$.
A line incident with $D$ is called \emph{special}.  A line that is not incident with $D$ is called \emph{ordinary}, and a point that is not $D$ is called \emph{ordinary}.

\medskip

\item \label{Axiom:A5}  Each special line intersects every other line in exactly one point.

\medskip

\item \label{Axiom:A6}  If two ordinary lines $l_1$ and $l_2$ intersect
in a point, then every two lines that intersect both  $l_1$ and $l_2$ in four
distinct points, intersect in a point.

\end{enumerate}

\emph{Notation}: We write \PR\ for a projective rectangle.  We write $\overline{pq}$ for the unique line that contains two points $p$ and $q$.  After we establish the existence of projective planes in \PR, we use the notation $\overline{abc\dots}$ to mean the unique line (if $abc\dots$ are collinear) or plane (if they are coplanar but not collinear) that contains the points $abc\dots$.

\begin{exam}\label{ex:L2k}
The matroid $L_2^k$ is an example of a projective rectangle
(see Figure \ref{figure1}).  It has $m+1=3$ special lines.  Let $A:= \{ a_g \mid g \in (\bbZ_2)^k\}
\cup \{D \}$, $B:=\{ b_g \mid g \in (\bbZ_2)^k\} \cup \{D \}$ and
$C:=\{ c_g \mid g \in (\bbZ_2)^k\} \cup \{D \}$.  Let
$L_2^k$ be the simple matroid of rank 3 defined on the ground set
$E:= A\cup B\cup C$ by its rank-2 flats.  The non-trivial rank-2 flats are $A$, $B$, $C$, which are the special lines,
and the sets $\{a_g, b_{g +h}, c_h \}$ with $g$ and $h$ in $(\bbZ_2)^k$, which are the ordinary lines.

\begin{figure} [htbp]
\begin{center}
\includegraphics[width=8cm]{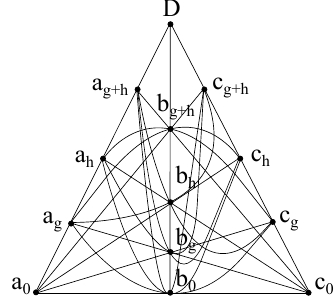}
\caption{The matroid $L_2^2$ with group $\bbZ_2 \times \bbZ_2$.} 
\label{figure1}
\end{center}
\end{figure}
\end{exam}

If a projective rectangle $\PR$ with exactly $m+1$ special lines has one of them with $n+1$ points, then we say that the \emph{order} of $\PR$ is $(m,n)$.  We do not assume $m$ or $n$ is finite unless we so state. 
The following result states basic properties from \cite{pr1}.

\begin{thm} \label{numberofpoinsinlines} 
If\/ $\PR$ is a projective rectangle of order $(m,n)$, then the following hold:
\begin{enumerate}[{\rm(a)}]

\item  \label{partitionPR:i} The ordinary points are partitioned by all special lines deleting $D$.

\item  \label{numberofpoinsinlines:g} $n \geq m$.

\item  \label{numberofpoinsinlines:c} All ordinary lines have $m+1$ points.

\item  \label{numberofpoinsinlines:f} All special lines have $n+1$ points.

\item  \label{numberofpoinsinlines:d} There are $m+1$ special lines and $n^2$ ordinary lines.  

\item  \label{numberofpoinsinlines:e} There are $n+1$ lines incident with each ordinary point.

\item  \label{numberofpoinsinlines:b} If $l$ is a line and $p$ is a point not in $l$, then the number of lines incident with $p$ intersecting $l$ equals the number of points on $l$.

\item \label{cor:numberofpoinsinlines:b} For a given point $p$ in an ordinary line $l$, there are $n-1$ ordinary lines intersecting $l$ at $p$.

\end{enumerate}
\end{thm}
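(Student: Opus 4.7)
The plan is to prove the eight parts in the logical order (g), (a), (c), (d), (f), (b), (e), (h), each building on the previous. The central tool is the elementary bijection behind (g): for any point $p\notin l$, the map $q\mapsto\overline{pq}$ sends points of $l$ to lines through $p$ that meet $l$, and A1 makes it a bijection (a line through $p$ meets $l$ in at most one point, else the two lines coincide). Part (a) is almost immediate from the definition of \emph{special}: every ordinary $p$ lies on $\overline{pD}$, which is the unique special line through $p$, since two special lines through $p$ would share both $p$ and $D$ and collapse by A1.

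For (c), an ordinary line $l$ avoids $D$, so each of its points is ordinary and lies on a unique special line by (a); conversely each of the $m+1$ special lines meets $l$ in exactly one point by A5, and these intersection points are distinct by (a). The resulting bijection between points of $l$ and special lines yields $|l|=m+1$. For (d), fix two special lines $l_1,l_2$ and an ordinary point $p\notin l_1\cup l_2$; setting $\phi(q)=\overline{pq}\cap l_2$ for $q\in l_1\setminus\{D\}$ produces a bijection onto $l_2\setminus\{D\}$, because each $\overline{pq}$ is ordinary (else $p\in l_1$) and hence meets the special $l_2$ at a single ordinary point by A5, with the inverse construction symmetric. Hence $|l_1|=|l_2|$, and all special lines share the cardinality $n+1$.

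With (c) and (d) in hand, the remaining parts are short. For (f), apply (g) to an ordinary point $p$ and a special line $l_s$ not through $p$; every line through $p$ meets $l_s$ (ordinary lines by A5, the special line through $p$ at $D$), so the count of lines through $p$ equals $|l_s|=n+1$. Part (b) follows by applying (g) to an ordinary line $l$ and an ordinary point $p\notin l$: only $m+1$ of the $n+1$ lines through $p$ meet $l$, so $n+1\geq m+1$. Part (e) is a double count of (ordinary point, ordinary line) incidences using (a), (c), (d), (f), yielding $(m+1)n\cdot n/(m+1)=n^2$ ordinary lines. Part (h) subtracts $l$ itself and the unique special line through $p$ from the $n+1$ lines through $p\in l$.

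The main obstacle will be producing the external ordinary points needed in the bijections of (d) and (b) (and the off-point special line in (f)). Each is a small existence claim to be extracted from A2 together with A3 and (c): the latter two force $m\geq 2$, so ordinary lines have at least three points, and a short case analysis on where $D$ lies relative to the four points supplied by A2 then produces the required point by taking the ordinary line through two suitably chosen points from the quadruple and using its extra points off the given special or ordinary lines.
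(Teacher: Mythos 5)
The paper does not prove this theorem; it is quoted as background from \cite{pr1}, so there is no in-paper proof to compare against. Judged on its own, your argument is correct and is the standard one for such incidence structures: the pencil bijection (g), the partition (a), the special-line/point-of-$l$ bijection for (c), projection between two special lines from an external point for (d), and then (f), (b), (e), (h) by counting. Each step checks out, including the injectivity/surjectivity claims, which all reduce to (A1) and (A5) as you say.

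The only places needing a little more care are exactly the existence claims you flag at the end, and your route to them works but should be spelled out in the right order. To get $m\geq 2$ from (A3) and (c) you first need an ordinary line to exist; this follows from (A2) since among the six lines joining the four points in general position, $D$ lies on at most three (at most two if $D$ is not one of the four), so some joining line is ordinary. With $m+1\geq 3$ special lines in hand, the external point for (d) comes most cleanly from a third special line (its $\geq 2$ ordinary points avoid $l_1\cup l_2$ by (a)), or, as you suggest, from the third point of an ordinary line; the special line missing $p$ in (f) exists because only one of the $m+1\geq 3$ special lines passes through $p$; and the ordinary point off an ordinary line $l$ in (b) sits on any special line, which meets $l$ only once. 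One last caveat: the paper allows $m,n$ infinite, so the arithmetic in (e) and (h) should be read as cardinal arithmetic or the statement restricted to the finite case, but that is an issue with the statement as quoted, not with your argument.
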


A projective rectangle is a combination of projective planes, in the strong sense that every two intersecting ordinary lines are lines of a substructure that is a projective plane.  
For this, we clarify the notion of substructure of an incidence structure $(\cP,\cL,\cI)$.
An \emph{incidence substructure} of $(\cP,\cL,\cI)$ is an incidence structure $(\cP',\cL',\cI')$ in which $\cP' \subseteq \cP$, $\cL' \subseteq \cL$, and the incidence relation is the restriction of that in the superstructure.  If $(\cP',\cL',\cI')$ is a projective plane, it may contain an ordinary line and all its points; we call that kind \emph{full}.  A subplane need not be full; it also need not be a maximal subplane.  However:

\begin{prop}\label{prop:maximalsubplane}
In a projective rectangle, every maximal projective subplane is full.
\end{prop}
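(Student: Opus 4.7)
The plan is to exhibit a \emph{full} subplane $\Pi$ of $\PR$ that contains the given maximal subplane $\pi$, so that the maximality of $\pi$ forces $\pi=\Pi$ and $\pi$ is full. First I would verify that $\pi$ contains two intersecting ordinary lines. Applying the plane axiom (A2) inside $\pi$ gives a quadrangle $\{p_1,p_2,p_3,p_4\}\subseteq\pi$ with no three points collinear, and at most one $p_i$ equals $D$. If every line joining two of the three non-$D$ points were special, two such lines through a common $p_i$ would coincide with the unique special line through $p_i$, forcing three of the $p_i$ to be collinear; a contradiction. So $\pi$ has an ordinary line, and the analogous count at any non-$D$ point of $\pi$ yields two ordinary lines $\ell_1,\ell_2$ of $\pi$ meeting at some $p_0\in\pi$.

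Next I would invoke the structural fact stated in the paragraph preceding the proposition (and established in \cite{pr1}): two intersecting ordinary lines of $\PR$ generate a full projective subplane $\Pi=\Pi(\ell_1,\ell_2)$ of $\PR$ containing every $\PR$-point of $\ell_1$ and $\ell_2$. The key step is then to show $\pi\subseteq\Pi$. Take $q\in\pi$; if $q\in\ell_1\cup\ell_2$, there is nothing to prove. Otherwise, $\pi$ is a projective plane and at least three lines of $\pi$ pass through $q$, so we may pick two such lines $m,m'$ both distinct from $\overline{qp_0}$. Each such $m$ meets $\ell_1$ in a point $a_1\in\pi\setminus\{p_0\}$ and $\ell_2$ in a point $a_2\in\pi\setminus\{p_0\}$. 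Since $a_1,a_2\in\Pi$ and $\Pi$ is a plane, the line $m=\overline{a_1a_2}$ lies in $\Pi$; the same is true for $m'$, and hence $q=m\cap m'\in\Pi$. Thus $\pi\subseteq\Pi$, and the maximality of $\pi$ gives $\pi=\Pi$, which is full.

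The main obstacle will be ensuring that $\Pi(\ell_1,\ell_2)$ is genuinely available as a full projective subplane of $\PR$. If it is not imported as a ready lemma from \cite{pr1}, I would construct it directly from axiom (A6): begin with all transversals $\overline{a_1a_2}$ for $a_1\in\ell_1\setminus\{p_0\}$ and $a_2\in\ell_2\setminus\{p_0\}$, and close under intersections, with (A6) guaranteeing that any two transversals meet in a point and that the closure satisfies the projective plane axioms.
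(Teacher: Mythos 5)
The paper itself gives no proof of Proposition \ref{prop:maximalsubplane}; it is imported from \cite{pr1} as background, so there is no in-paper argument to compare against. Judged on its own terms, your argument is sound and complete in its main line: the quadrangle count showing that at most one of the lines of $\pi$ through a non-$D$ point can be special (two special lines through an ordinary point would share that point and $D$, hence coincide by (A\ref{Axiom:A1})) correctly produces two intersecting ordinary lines $\ell_1,\ell_2$ of $\pi$; and the two-perspectivity step ($q=m\cap m'$ with $m,m'$ recovered inside $\Pi$ from their traces on $\ell_1,\ell_2$) correctly yields $\pi\subseteq\Pi$, after the routine checks you omit ($a_1\neq a_2$, and $\overline{a_1a_2}\neq\overline{a_1'a_2'}$ as $\PR$-lines, both of which follow from (A\ref{Axiom:A1})). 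Maximality then finishes it.

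The one substantive caveat is the dependency you yourself flag. Your proof derives Proposition \ref{prop:maximalsubplane} from Theorem \ref{prop:twolinesintersectingpp} (two intersecting ordinary lines lie in a unique \emph{full} plane), which in this paper is stated \emph{after} the proposition and, like it, is quoted from \cite{pr1}. If in \cite{pr1} the existence of the full plane on $\ell_1,\ell_2$ is established independently, your argument is a legitimate and clean deduction; if instead that theorem is proved there by passing to a maximal subplane containing the two lines and invoking fullness of maximal subplanes, your argument is circular. Your fallback — building $\Pi(\ell_1,\ell_2)$ from scratch as the closure of the transversal family under intersections, using (A\ref{Axiom:A6}) — is the right idea but is only a sketch: verifying that this closure satisfies all the projective-plane axioms (in particular handling transversals through $p_0$, the special lines through $D$, and the fact that the closure stabilizes with every ordinary line full) is where essentially all of the work of \cite{pr1} lies, and cannot be waved through in one sentence. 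So the proposal is correct conditional on Theorem \ref{prop:twolinesintersectingpp} being available first, and incomplete if it must be self-contained.
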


From now on, by a \emph{plane} in a projective rectangle we mean a full projective subplane.  Also, when we say some points or lines are \emph{coplanar}, we mean there is a plane $\pi$ such that each of the points is a point of $\pi$, each line that is ordinary is a line of $\pi$, and for each line $s$ that is special, $s \cap \pi$ is a line of $\pi$.

\begin{thm}\label{prop:twolinesintersectingpp} 
Let $\PR$ be a projective rectangle.  The special point $D$ is a point of every plane.  
If $s$ is a special line in the projective rectangle $\PR$ and $\pi$ is a plane in $\PR$, then $s \cap \pi$ is a line of $\pi$.

If two ordinary lines in $\PR$ intersect in a point, then both lines are lines of a unique plane in $\PR$.

Given three noncollinear ordinary points in a projective rectangle $\PR$, there is a unique plane in $\PR$ that contains all three points.
Given an ordinary line $l$ and an ordinary point $p$ not in $l$, there is a unique full projective plane in $\PR$ that contains both.
\end{thm}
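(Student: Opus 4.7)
The plan is to establish the assertions roughly in the order they appear, with the technical core being the claim that two intersecting ordinary lines lie in a unique plane; the remaining statements then follow with modest additional work. Throughout, Axiom A6 is the central tool, and the main challenge is choosing the right reference pair for each application of A6.

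\textbf{Step 1: two intersecting ordinary lines span a unique plane.} Given ordinary lines $l_1, l_2$ meeting at $q$, I call a $\PR$-line $\overline{ab}$ a \emph{transversal} when $a \in l_1 \setminus \{q\}$ and $b \in l_2 \setminus \{q\}$. The fundamental lemma is that any two transversals meet: either they share an endpoint, or their four endpoints on $l_1 \cup l_2$ are distinct and A6 applies directly with $l_1, l_2$ as the reference pair. I then take the points of $\pi$ to be the points of $l_1 \cup l_2$ together with all transversal--transversal intersections, and the lines of $\pi$ to be $l_1$, $l_2$, all transversals, and every $\PR$-line of the form $\overline{pq}$ for $p$ a transversal intersection off $l_1 \cup l_2$. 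The projective-plane axioms reduce to closure under join and meet: joining $x \in l_1 \setminus \{q\}$ with an off-line intersection $y$ uses A6 with reference pair $l_1$ and a transversal through $y$, showing $\overline{xy}$ meets $l_2$; meeting a line $\overline{pq}$ with a transversal $t$ uses A6 with a hybrid pair such as $l_2$ and a transversal through $p$ whose endpoints differ from those of $t$, since the natural pairs $l_1, l_2$ and two transversals through $p$ both fail the ``four distinct points'' hypothesis. The four-point axiom is witnessed by two points each of $l_1 \setminus \{q\}$ and $l_2 \setminus \{q\}$, and fullness follows from $l_1 \subset \pi$. Uniqueness: any plane $\pi'$ containing $l_1, l_2$ is forced to have order $m$ by its own full line, hence contains all $\PR$-points of $l_1, l_2$; closure of $\pi'$ under join and meet gives $\pi' \supseteq \pi$, and equal cardinalities give $\pi' = \pi$.

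\textbf{Step 2: $D$ and special lines in $\pi$.} Let $\pi$ be a plane containing the ordinary line $l$ with all $m+1$ of its points, so $\pi$ has order $m$. For each $a \in l$ let $s_a$ denote the unique special $\PR$-line through $a$ (Theorem \ref{numberofpoinsinlines}(a)); these exhaust all $m+1$ special lines of $\PR$ (Theorem \ref{numberofpoinsinlines}(d)). To show $s_a$ is a line of $\pi$: of the $m^2$ $\pi$-lines avoiding $a$, at most $m$ are special in $\PR$, so for $m \geq 2$ some ordinary $\PR$-line $l''$ of $\pi$ misses $a$ and has all $m+1$ of its $\PR$-points in $\pi$. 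By A5, $s_a$ meets $l''$ at a point $p \in \pi$ with $p \neq a$; by A1 in $\PR$, the $\pi$-line through $a, p$ is $s_a$. Hence $s_a$ is a line of $\pi$ and $s_a \cap \pi$ is its $\pi$-point-set. Two distinct such lines $s_a, s_b$ meet in $\pi$ at a $\pi$-point equal in $\PR$ to $\{D\}$ (A5, since both contain $D$), so $D \in \pi$.

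\textbf{Steps 3--4 and main obstacle.} For noncollinear ordinary points $p_1, p_2, p_3$: two special joining lines would share a common vertex and $D$, forcing coincidence and collinearity; so at least two joining lines are ordinary and share a vertex, and Step~1 supplies the unique containing plane. For an ordinary line $l$ and ordinary $p \notin l$, any two distinct $q_1, q_2 \in l$ (ordinary because $D \notin l$) with $p$ form a noncollinear ordinary triple, reducing to the previous case. The main obstacle is the meet-closure verification in Step~1: both natural reference pairs ($l_1, l_2$ and two transversals through $p$) violate A6's ``four distinct points'' hypothesis for lines through $q$, so a hybrid reference pair must be chosen with care.
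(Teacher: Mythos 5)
This theorem is stated in Section \ref{background} as background imported from \cite{pr1}; the present paper contains no proof of it, so there is nothing in-paper to compare your argument against, and I can only assess the proposal on its own terms. Your architecture is sensible: build the plane on two intersecting ordinary lines out of their transversals using (A\ref{Axiom:A6}), then derive the assertions about $D$, special lines, three noncollinear points, and a line plus a point. Steps 2--4 are essentially correct granted Step 1 (and Step 2's counting is a nice touch), but Step 1 as written has genuine gaps.

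First, you verify only two of the closure cases needed for $\pi$ to be a projective plane. Missing entirely is the join of two off-line intersection points $y_1,y_2$ (one must show $\overline{y_1y_2}$ is a transversal or passes through $q$; the natural reference pairs can violate the four-distinct-points hypothesis when transversals through $y_1$ and through $y_2$ share endpoints on $l_1$ or $l_2$). Also missing is the point-closure half of the meet axiom: when $\overline{pq}$ meets a transversal $t$ at some $z\notin l_1\cup l_2$, you must show $z$ is itself a point of $\pi$, i.e.\ lies on a \emph{second} transversal --- another join-type argument you have not supplied. Second, every use of (A\ref{Axiom:A6}) with a transversal as a reference line silently assumes that transversal is ordinary, but transversals can pass through $D$; you need the remark that at most one line through an ordinary point is special (by (A\ref{Axiom:A1})), plus a separate (A\ref{Axiom:A5}) argument when the relevant intersection point is $D$ itself, in which case the line in question is special and meets everything anyway. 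Third, the ``hybrid pair'' for the meet step is asserted but not shown to exist: the two transversals defining $p$ may each share an endpoint with $t$, and one of them may be special, so ``a transversal through $p$ whose endpoints differ from those of $t$'' need not be available from the data at hand. A workable repair is to take the reference pair $(l_1,t_1)$ or $(l_2,t_1)$ for an ordinary transversal $t_1\ni p$: two distinct transversals cannot share both endpoints, so one of these two choices yields four distinct intersection points. Finally, your uniqueness argument concludes with ``equal cardinalities give $\pi'=\pi$,'' which is valid only for finite $m$; since the paper allows infinite order, you should instead argue directly that every point of $\pi'$ off $l_1\cup l_2$ lies on two transversals and hence belongs to $\pi$.
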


A projective rectangle is nontrivial if and only if it contains more than one plane.  

\begin{cor}\label{coro;lineinaprojectiveplane}
Let $\PR$ be a projective rectangle.  Every ordinary line in $\PR$ is a line of a plane in $\PR$.  If\/ $\PR$ is nontrivial, then every ordinary line $l$ is a line of at least three planes that contain $l$.
\end{cor}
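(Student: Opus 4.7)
For the first statement, the approach is short: I would pick any point $p \in l$, noting that $p$ is ordinary because $l$ is ordinary (Axiom~(\ref{Axiom:A4})); then Theorem~\ref{numberofpoinsinlines}(\ref{cor:numberofpoinsinlines:b}) combined with $n \geq m \geq 2$ (from Theorem~\ref{numberofpoinsinlines}(\ref{numberofpoinsinlines:g}) and Axiom~(\ref{Axiom:A3})) supplies at least one other ordinary line $l' \neq l$ through $p$; applying Theorem~\ref{prop:twolinesintersectingpp} to the intersecting pair $\{l, l'\}$ then produces a plane containing $l$.

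For the second statement, the plan is to compute the number $N$ of planes containing $l$ exactly and then apply integrality to rule out $N \leq 2$. Fix any ordinary point $p \in l$. Any plane $\pi$ through $l$ contains $p$ and has order $m$ (since $\pi$ contains the ordinary line $l$ with $m+1$ points, by Theorem~\ref{numberofpoinsinlines}(\ref{numberofpoinsinlines:c})), so $\pi$ has $m+1$ lines through $p$; exactly one of these is the trace $s_p \cap \pi$ of the unique special line $s_p$ through $p$ (the uniqueness of $s_p$ comes from Theorem~\ref{numberofpoinsinlines}(\ref{partitionPR:i}), and the fact that $s_p \cap \pi$ is a line of $\pi$ from Theorem~\ref{prop:twolinesintersectingpp}), while the remaining $m$ are ordinary lines of $\PR$ inside $\pi$. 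Excluding $l$ itself, the plane $\pi$ accounts for exactly $m-1$ of the $n-1$ ordinary lines through $p$ other than $l$ (count from Theorem~\ref{numberofpoinsinlines}(\ref{cor:numberofpoinsinlines:b})). Conversely, each such line $l'$ lies in the unique plane determined by $l$ and $l'$, and distinct planes through $l$ partition these lines by the uniqueness in Theorem~\ref{prop:twolinesintersectingpp}. Hence
\[
N = \frac{n-1}{m-1}.
\]

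Nontriviality is equivalent to $m < n$, so $N \geq 2$. To rule out $N = 2$, I would suppose $N = 2$, so $n = 2m-1$, and count (plane, ordinary-line-in-plane) incidences two ways: each plane is a projective plane of order $m$ containing $m^2$ ordinary lines of $\PR$ (its $m^2+m+1$ lines minus the $m+1$ lines through $D$, which are precisely the traces of the $m+1$ special lines); each ordinary line of $\PR$ lies in $N = 2$ planes; and there are $n^2$ ordinary lines in total by Theorem~\ref{numberofpoinsinlines}(\ref{numberofpoinsinlines:d}). The total number of planes is therefore
\[
P \;=\; \frac{n^2 \cdot N}{m^2} \;=\; \frac{2(2m-1)^2}{m^2}.
\]
Since $\gcd(m, 2m-1) = 1$, integrality of $P$ forces $m^2 \mid 2$, hence $m = 1$, contradicting $m \geq 2$. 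Therefore $N \geq 3$.

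The main obstacle is the bookkeeping that gives $N = (n-1)/(m-1)$; the critical point is identifying the trace of the unique special line through $p$ as the sole non-ordinary-line-of-$\PR$ among the $m+1$ lines of $\pi$ through $p$, which is where the partition property in Theorem~\ref{numberofpoinsinlines}(\ref{partitionPR:i}) is essential. Once this formula is in hand, the short integrality step is the only extra ingredient needed to jump from $N \geq 2$ to $N \geq 3$.
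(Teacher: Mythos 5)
The paper does not actually prove this corollary---it is imported from \cite{pr1} as background---so I can only judge your argument on its own terms. Your first paragraph is fine (indeed the last sentence of Theorem~\ref{prop:twolinesintersectingpp} gives it even faster from any ordinary point off $l$). Your second paragraph is a correct and rather elegant argument \emph{provided $m$ and $n$ are finite}: the partition of the $n-1$ ordinary lines through $p$ meeting $l$ into blocks of size $m-1$, one block per plane through $l$, does give $N=(n-1)/(m-1)$; the double count $Pm^2=Nn^2$ is legitimate (the $m+1$ lines of a plane through $D$ are exactly the traces of the $m+1$ special lines, and a line of the plane avoiding $D$ cannot lie inside a special line, so it is a full ordinary line of $\PR$); and the divisibility step $m^2\mid 2(2m-1)^2$ together with $\gcd(m,2m-1)=1$ cleanly rules out $N=2$. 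This is almost certainly a different route from the source's, and in the finite setting it buys you the exact count $N=(n-1)/(m-1)$, which is more than the corollary asks for.

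The genuine gap is finiteness. The corollary carries no finiteness hypothesis, the paper states explicitly that $m$ and $n$ are not assumed finite, and Section~\ref{Infinitecase} constructs projective rectangles of order $(|\bbQ|,|\bV|)$, where $m$ is infinite. When $m$ is infinite, both the formula $N=(n-1)/(m-1)$ and the integrality of $P=2(2m-1)^2/m^2$ lose their meaning: each plane through $l$ then absorbs $m-1=m$ of the $n-1=n$ ordinary lines through $p$, and cardinal arithmetic cannot distinguish $N=2$ from $N=3$. (Your assertion that nontriviality is equivalent to $m<n$ is also left unproved; the needed direction---that $m=n$ forces a single plane---is easy when everything is finite but is exactly the kind of statement that becomes delicate for infinite cardinals.) So what you have is a proof for finite projective rectangles only; the infinite case requires a non-counting argument, for instance one that exhibits a third plane through $l$ directly from two given ones.
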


\begin{prop}\label{threelinesintersect}
Suppose three lines in a projective rectangle $\PR$ intersect pairwise in three different points.  Then they are a coplanar triple.

Equivalently, if three lines intersect pairwise (i.e., are pairwise coplanar) but are not a coplanar triple, then they all intersect in the same point.
\end{prop}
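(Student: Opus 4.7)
\textbf{Proof plan for Proposition \ref{threelinesintersect}.}

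Let $l_1,l_2,l_3$ be three distinct lines meeting pairwise in distinct points $p_{12}=l_1\cap l_2$, $p_{13}=l_1\cap l_3$, $p_{23}=l_2\cap l_3$. The plan is to split on how many of the $l_i$ are special and, in each case, produce a plane $\pi$ witnessing coplanarity. All three lines cannot be special, because every special line contains $D$ and by Axiom (A1) two distinct special lines intersect only in $D$; that would collapse the three intersection points to one.

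For the main case, in which all three lines are ordinary, I would begin with the plane $\pi$ guaranteed by Theorem \ref{prop:twolinesintersectingpp} as the unique plane containing the intersecting ordinary lines $l_1$ and $l_2$. Since planes are full (Proposition \ref{prop:maximalsubplane}), every point of $l_1$ and every point of $l_2$ lies in $\pi$; in particular, $p_{13}\in l_1\subset\pi$ and $p_{23}\in l_2\subset\pi$. In the projective plane $\pi$, the line through the distinct points $p_{13}$ and $p_{23}$ is a line of $\pi$, and by Axiom (A1) applied in $\PR$ this is the same as $l_3$. Hence $l_3$ is a line of $\pi$ and the triple is coplanar.

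If exactly one $l_i$ is special, say $l_1$, then $l_2$ and $l_3$ meet at $p_{23}$, so Theorem \ref{prop:twolinesintersectingpp} produces the unique plane $\pi$ containing them; the same theorem asserts $l_1\cap\pi$ is a line of $\pi$, so the definition of coplanarity for a special line is satisfied. If exactly two $l_i$ are special, say $l_1,l_2$, then $p_{12}=D$ and $l_3$ must be ordinary (otherwise $l_3$ would contain $D$ and meet $l_1$ only there, contradicting $p_{13}\ne D$). Now take any plane $\pi$ containing $l_3$ (Corollary \ref{coro;lineinaprojectiveplane}). By Theorem \ref{prop:twolinesintersectingpp}, $l_i\cap\pi$ is a line of $\pi$ for $i=1,2$, finishing this case. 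The hardest bookkeeping here is simply making sure in each case that the relevant intersection points actually lie in $\pi$, which follows automatically because $l_3\subset\pi$ forces $p_{13},p_{23}\in\pi$ and $D\in\pi$ always.

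For the equivalent formulation, I would argue by contrapositive: suppose three pairwise intersecting lines are not a coplanar triple. By the first part of the proposition, their three pairwise intersections are not three distinct points, so two of them coincide, say $l_1\cap l_2=l_1\cap l_3=p$. Then $p\in l_2$ and $p\in l_3$, and since $l_2\ne l_3$ meet (by hypothesis) in at most one point by Axiom (A1), that point must be $p$. Hence all three lines pass through the common point $p$. I do not anticipate a serious obstacle; the only delicate point is invoking the correct clause of Theorem \ref{prop:twolinesintersectingpp} to handle special lines uniformly in the definition of coplanarity.
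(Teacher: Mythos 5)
The paper does not actually prove Proposition \ref{threelinesintersect}: it is quoted as background from \cite{pr1}, so there is no in-paper argument to compare yours against. Judged on its own, your proof is correct and is the natural derivation from the other imported facts. The only substantive case is the one with three ordinary lines, and your key step there is sound: $p_{13}$ and $p_{23}$ are distinct points of the full plane $\pi$ determined by $l_1$ and $l_2$; the line of $\pi$ joining them is, by the definition of an incidence substructure, a line of $\PR$ incident with both points, so Axiom (A\ref{Axiom:A1}) forces it to be $l_3$; and fullness then places all of $l_3$ inside $\pi$, giving a coplanar triple. The cases involving special lines are, as you in effect observe, nearly vacuous once one combines the paper's definition of ``coplanar'' with the clause of Theorem \ref{prop:twolinesintersectingpp} that $s\cap\pi$ is a line of $\pi$ for every special line $s$ and every plane $\pi$. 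The derivation of the second formulation from the first is also fine; the only thing worth saying explicitly is that whichever two of the three intersection points coincide, the coinciding point lies on all three lines, so the ``say $l_1\cap l_2=l_1\cap l_3$'' step is a genuine without-loss-of-generality. One cosmetic remark: fullness of the plane produced by Theorem \ref{prop:twolinesintersectingpp} comes from the paper's convention that ``plane'' means full subplane, for which Proposition \ref{prop:maximalsubplane} is the supporting fact rather than the statement you need verbatim.
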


\sectionpage\section{Harmonic conjugation}\label{sec:harmonicconjugation}

The first key concept of this paper was that of a projective rectangle.  
We now define the second: abstract harmonic conjugation (following Lindstr\"om \cite{lhc} and Fl\'orez \cite{rfhc}).
The main result in this section, Theorem \ref{thm:harmonicconjugation}, says that a nontrivial projective rectangle has harmonic conjugation almost everywhere.  

Let $d,e,f$ be three points in a matroid $M$ that are collinear in a line $l$.  Suppose
there are four more points $a_1,a_2,a_3,a_4$ in $M$ such that $\{d,a_1,a_2\}$,
$\{d,a_4,a_3\}$, $\{e,a_4,a_2\}$, $\{f,a_4,a_1\}$, and $\{f,a_3,a_2\}$ are collinear, while
$\{a_1,e,a_3\}$ may or may not be collinear.  Then, the seven points
form a \emph{harmonic configuration} $\HP(d,e,f;a_1,a_2,a_3,a_4)$
based on $(d,e,f)$.  There may be a point $h$ that lies
at the intersection of the lines spanned by $\{a_1,a_3\}$ and $\{d,f\}$,
so that $\{a_1,a_3,h\}$ and $\{d,e,f,h\}$ are
collinear.  If the point $h$ always exists and is the same
point for any harmonic configuration based on $\{d,e,f\}$, then
$h$ is the \emph{harmonic conjugate} of $e$ with respect to
$d$ and $f$.  Note that $e$ and $h$ are not necessarily
distinct.  We define $\Hc(d,f;e,h)$ to mean that there are elements
$d, e, f, a_1, a_2, a_3, a_4$ with $\HP(d, e, f; a_1, a_2, a_3, a_4)$ and
that $h$ is the harmonic conjugate of $e$ with respect to $d$ and $f$.

\begin{figure}[htb]
\includegraphics[width=5cm]{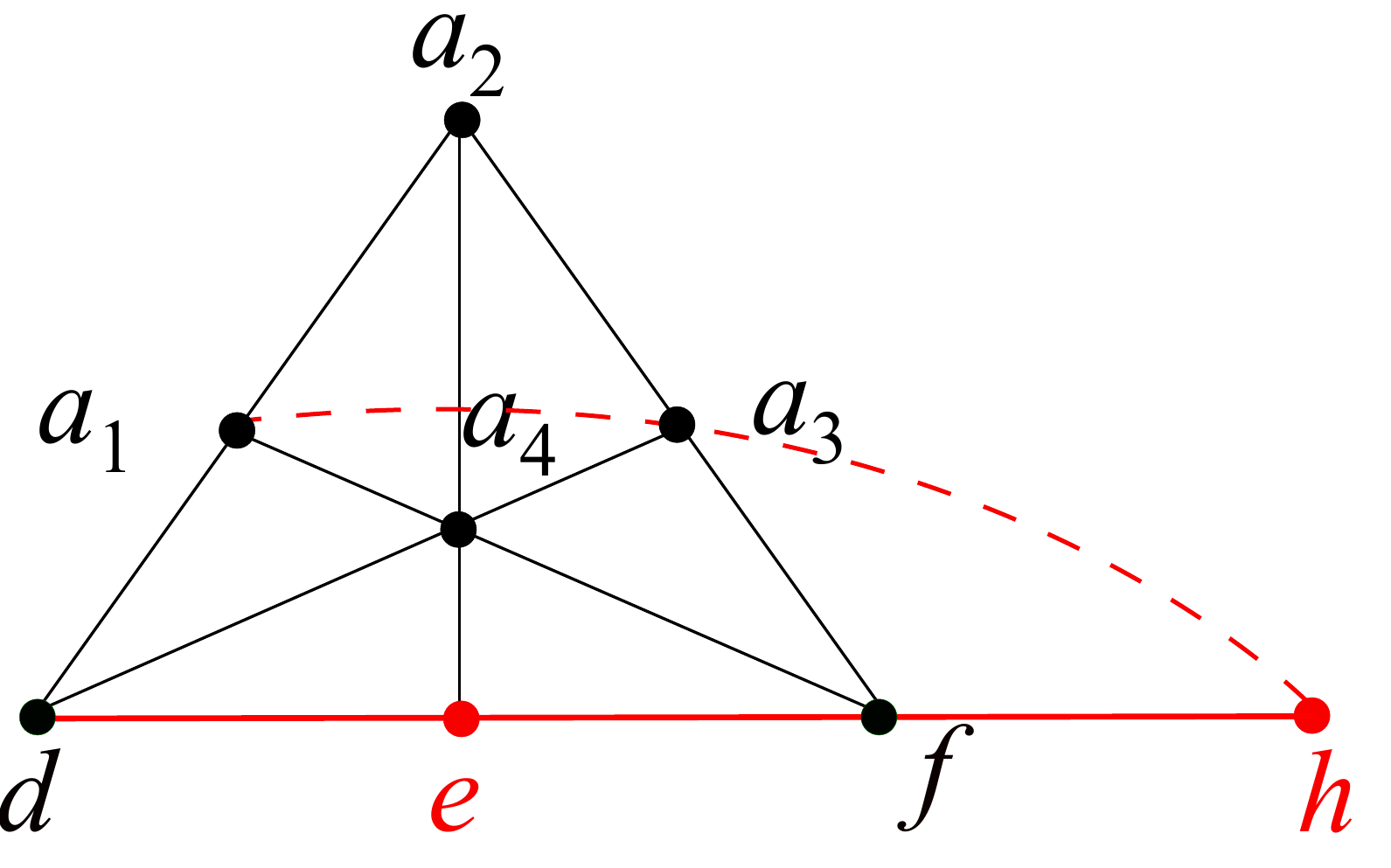}
\caption{The points and lines of a harmonic configuration, with the harmonic conjugate $h$.}
\label{F:harmonicgen}
\end{figure}

\begin{lem}\label{lem:harmonicconfigplane}
A harmonic configuration in a projective rectangle is contained in a unique plane.
\end{lem}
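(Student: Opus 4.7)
\emph{Plan.} The plan is to find three noncollinear ordinary points of the harmonic configuration, invoke Theorem~\ref{prop:twolinesintersectingpp} to obtain the unique plane $\pi$ through them, and then show every remaining point of the configuration lies in $\pi$ by propagating along the sides.

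I first check that $a_1, a_2, a_3, a_4$ form a quadrangle: if three of them were collinear, the common line would coincide with two of the five listed configuration lines $\overline{d a_1 a_2}$, $\overline{d a_3 a_4}$, $\overline{e a_2 a_4}$, $\overline{f a_1 a_4}$, $\overline{f a_2 a_3}$, contradicting distinctness. Because the special point $D$ is unique, at most one of the seven configuration points equals $D$; hence at least three of $a_1, a_2, a_3, a_4$ are ordinary, and after relabeling I may take $a_1, a_2, a_3$ to be ordinary. Theorem~\ref{prop:twolinesintersectingpp} then supplies a unique plane $\pi$ containing these three noncollinear ordinary points.

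The propagation principle is: if $m$ is an ordinary line of $\PR$ with at least two distinct points in $\pi$, then $m$ is the unique line of $\PR$ through those two points and coincides with the line of $\pi$ through them, so by fullness $m \subseteq \pi$; in particular, every listed point of $m$ lies in $\pi$. Cascading along the sides: $\overline{a_1 a_2}$ places $d \in \pi$; then $\overline{a_2 a_3}$ places $f \in \pi$; then $l = \overline{d f}$ places $e \in \pi$; and finally any of $\overline{d a_3}$, $\overline{e a_2}$, $\overline{f a_1}$ places $a_4 \in \pi$. If the harmonic conjugate $h$ is defined, it lies on $\overline{a_1 a_3}$ and on $l$, each of which by now has two points in $\pi$, so $h \in \pi$ by the same principle. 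Uniqueness of $\pi$ is then immediate: any plane containing the configuration contains $a_1, a_2, a_3$, and Theorem~\ref{prop:twolinesintersectingpp} says such a plane is unique.

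The main obstacle is accounting for the cases in which some configuration line is special (contains $D$). The key observation keeping the case analysis tractable is that any two configuration lines that are both special meet at $D$, so $D$ must coincide with their unique common point, which is one of the seven configuration points; this severely restricts the possible patterns. In each affected step the rescue is one of: (i) $D$ is the very point being placed, so that point lies in $\pi$ because $D$ lies in every plane by Theorem~\ref{prop:twolinesintersectingpp}; (ii) an alternative configuration line through the point under consideration is ordinary and can be used instead; or (iii) in the most delicate subcase I invoke Proposition~\ref{threelinesintersect} on a triple of ordinary configuration lines of the configuration to produce an auxiliary plane $\pi'$ which must coincide with $\pi$ by the uniqueness clause of Theorem~\ref{prop:twolinesintersectingpp} (both $\pi'$ and $\pi$ end up containing a common ordinary line together with an ordinary point not on it). I expect this case analysis to be the bulk of the work.
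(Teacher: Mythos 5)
Your proposal is correct and takes essentially the same approach as the paper's proof: both obtain the plane $\pi$ from Theorem~\ref{prop:twolinesintersectingpp} (you from three noncollinear ordinary quadrangle vertices, the paper from two intersecting ordinary configuration lines), propagate membership in $\pi$ by fullness of ordinary lines, and dispose of special configuration lines by observing that two of them would have to meet at $D$, which lies in every plane. One small remark: your rescue (iii) is never actually needed, since every configuration point other than $D$ lies on at least two configuration lines, at most one of which can pass through $D$, so rescues (i) and (ii) already cover all cases.
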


\begin{proof}
At least two lines $l_1,l_2$ of the harmonic configuration $S$ are ordinary; these two lines intersect so they are contained in a unique plane $\pi$, by Theorem \ref{prop:twolinesintersectingpp}.  
There are four kinds of line pair in $S$: outer pairs like $l, \overline{da_1a_2}$ which intersect at $d$, inner pairs like $\overline{da_4a_3}, \overline{fa_4a_1}$ which intersect at $a_4$, mixed pairs like $l, \overline{da_4a_3}$ which intersect at $d$, and mixed pairs like $l, \overline{ea_4a_2}$, which intersect at $e$.  (The first three types are the same under automorphisms of $S$.)  
The lines $l_1$ and $l_2$, being ordinary, are entirely contained in $\pi$, so their five points of $S$ are in $\pi$.  
At most three lines of $S$ can be special (and only if one of the points of $S$ is $D$) so there is a third ordinary line, which is also entirely in $\pi$ and whose additional point (or points) are therefore in $\pi$.  That leaves at most one point $p$ that could conceivably not lie in $\pi$, but $p$ is the intersection of two lines of $S$, say $l_3, l_4$.  If either of those lines is ordinary, say $l_3$, it is entirely in $\pi$ so $p \in l_3 \subseteq \pi$.  If neither of them is ordinary, then $p=D$, but $D$ is in every plane of the projective rectangle (Theorem \ref{prop:twolinesintersectingpp}), so $p \in \pi$.  In any case, all points of $S$ are in $\pi$.
\end{proof}

\begin{thm}\label{thm:harmonicconjugation}
In a nontrivial projective rectangle, the harmonic conjugate exists for any three points in an ordinary line and for any three collinear points of which one is the special point.
\end{thm}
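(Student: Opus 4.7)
My plan is, in each of the two cases of the theorem, to verify three things: that a harmonic configuration based on $(d,e,f)$ exists, that the intersection point $h$ is defined for every such configuration, and that $h$ is the same for every choice of configuration.

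For the first two tasks I would work inside a plane $\pi$. In case (i), where $l=\overline{def}$ is ordinary, Corollary \ref{coro;lineinaprojectiveplane} supplies a plane $\pi\supset l$; I then pick $a_2\in\pi\setminus l$ and $a_1\in\overline{da_2}\setminus\{d,a_2\}$, and set $a_4:=\overline{fa_1}\cap\overline{ea_2}$ and $a_3:=\overline{da_4}\cap\overline{fa_2}$, the intersections existing because $\pi$ is a projective plane. In case (ii), where $d=D$ and $l$ is special, I apply Theorem \ref{prop:twolinesintersectingpp} to $e$, $f$, and an ordinary point off $l$ to obtain a plane $\pi$ containing $e,f$; since $D$ lies in every plane, $d,e,f$ are collinear on $l\cap\pi$, and the same construction inside $\pi$ yields a configuration. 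For existence of $h$, Lemma \ref{lem:harmonicconfigplane} confines any configuration to a unique plane $\pi$, and the join $\overline{a_1a_3}$ meets the line $l\cap\pi$ of $\pi$ at a point $h\in l$.

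Invariance is the serious step. When two configurations $C,C'$ lie in a common plane $\pi$, I would pass from $C$ to $C'$ by a chain of elementary replacements, changing one auxiliary point at a time and tracking that the intersection of $\overline{a_1a_3}$ with $l$ is preserved at each step via axiom (A\ref{Axiom:A6}) and Proposition \ref{threelinesintersect}. When $C\subset\pi$ and $C'\subset\pi'$ lie in different planes, I would bridge the two with an auxiliary configuration built from the abundance of planes through $l$ (Corollary \ref{coro;lineinaprojectiveplane}, case (i)) or from the universality of $D$ as a point of every plane (Theorem \ref{prop:twolinesintersectingpp}, case (ii)), ensuring that the bridging configuration shares $\overline{a_1a_3}$ or its intersection with $l$ with both endpoints of the chain.

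The main obstacle is this invariance. Axioms (A\ref{Axiom:A1})--(A\ref{Axiom:A6}) do not obviously furnish a Desargues-type uniqueness statement inside a single plane, so the argument cannot stay entirely within one plane and must exploit the global $\PR$ structure. Case (ii) adds the further complication that two lines of the configuration (those through $D=d$) are forced to be special, so the invariance argument must carefully track how these special lines interact with the remaining ordinary lines of the configuration and with the planes being used for comparison.
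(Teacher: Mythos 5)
The heart of this theorem is the invariance step, and your proposal describes the problem there without actually solving it; the key idea of the paper's argument is missing. The paper handles the case of two configurations $S\subset\pi$ and $S'\subset\pi'$ in \emph{different} planes directly and first: the three joining lines $\overline{a_ia_i'}$ for $i\in\{1,2,4\}$ are pairwise coplanar (each pair lies in a plane such as $\overline{a_1a_1'd}=\overline{a_2a_2'd}$ spanned by two corresponding lines of $S$ and $S'$ through a common base point), so by Proposition \ref{threelinesintersect} they are concurrent at a point $r$; the same applies to $i\in\{2,3,4\}$ with a point $t$; since $r,t\in l_2\cap l_4$ they coincide, whence $l_1$ and $l_3$ are coplanar, whence $\overline{a_1a_3}$, $\overline{a_1'a_3'}$, and $l$ are pairwise coplanar and therefore concurrent by Proposition \ref{threelinesintersect} again. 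This spatial, Desargues-style concurrence argument is the engine of the whole proof, and nothing in your proposal supplies it. Your ``bridge with an auxiliary configuration'' for the two-plane case presupposes that some comparison of configurations is already known to preserve $h$, which is exactly what has to be established.

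The reduction direction in your plan is also backwards, and that is where it would fail. For two configurations in the \emph{same} plane you propose a chain of elementary replacements inside $\pi$ tracked by Axiom (A\ref{Axiom:A6}) and Proposition \ref{threelinesintersect}; but, as you yourself note, the axioms give no Desargues-type uniqueness within a single plane (whether every plane of a \PR\ is a little Desargues plane is an open question in this paper), so an in-plane chain cannot be closed. The paper instead uses nontriviality to pick a point $a_2'\notin\pi$, builds a third configuration $S'$ in a different plane $\pi'$ on the same base $(d,e,f)$, and applies the two-plane case twice; the same-plane case is thus reduced to the different-plane case by escaping the plane, not resolved inside it. (Your existence constructions for the configuration and for $h$, including the case $d=D$ or $f=D$ on a special line, are essentially in line with the paper's; the gap is confined to, but fatal for, the uniqueness of $h$.)
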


We do not know whether the harmonic conjugate must exist if the three points lie on a special line but none is the special point.

\begin{proof}
Consider a nontrivial projective rectangle $\PR$ and a harmonic configuration $S$ of seven points as in Figure \ref{F:harmonicgen} (excluding $h$).  Since $S$ contains (at least) two ordinary lines that intersect in a point, it is in a plane $\pi$.

Suppose there is a second harmonic configuration $S'$ with corresponding points $a_i'$, $d^{\prime}=d$, $e^{\prime}=e$, and $f^{\prime}=f$, in a different plane $\pi'$ (see Figure \ref{F:harmonic2planes}).
From the figure we can see that the lines $l_i=\overline{a_ia_i'}$ for $i \in \{1,2,4\}$ are pairwise coplanar (for example, $l_1, l_2 \in \overline{a_1a_{1}^{\prime}d}= \overline{a_2a_2^{\prime}d}$).  From this and Proposition \ref{threelinesintersect}, we infer that $l_i=\overline{a_ia_i'}$ for $i \in \{1,2,4\}$ intersect in a point $r$.  Similarly, we prove that $l_i=\overline{a_ia_i^{\prime}}$ for $i \in \{2,3,4\}$ are pairwise coplanar.  So, by Proposition \ref{threelinesintersect}, $l_i=\overline{a_ia_i^{\prime}}$ for $i \in \{2,3,4\}$ intersect in a point $t$.  Since $l_2 \cap l_4=t$ and $l_2 \cap l_4=r$ are in both $l_2$ and $l_4$ they must be equal.  This implies that $l_1$ and $l_3$ are coplanar.  Therefore, the lines $\ell_{13}=\overline{a_1a_3}$ and $\ell_{13}^{\prime}=\overline{a_1^{\prime}a_3^{\prime}}$ are coplanar (see Figure \ref{F:harmonic4lines}).  This proves that $\ell_{13}$, $\ell_{13}^{\prime}$, and $l$ are pairwise coplanar.  From Proposition \ref{threelinesintersect} we obtain the desired result.

Now suppose we have two harmonic configurations in the same plane $\pi$, call them $S$ and $S''$, with corresponding points  $a_i''$, $d''=d$, $e''=e$, and $f''=f$.  First we assume $l$ is an ordinary line.  We construct a harmonic configuration $S'$ not in $\pi$ as follows.  Choose a point $a_2' \notin \pi$ (whence $a_2' \neq D$); the lines $da_2'$, $ea_2'$, and $fa_2'$ exist by Axiom (A\ref{Axiom:A1}) and there is a unique plane $\pi'$ containing two of them by Theorem \ref{prop:twolinesintersectingpp}.  Because $d,e,f$ are collinear, they are all in that same plane.  Then by applying the first, noncoplanar case twice we arrive at the conclusion.

If $l$ is a special line we have to be more careful.  We assume one of $d,e,f$ is the special point.  Suppose first that $e=D$.  Then $\overline{da_2'}$ and $\overline{fa_2'}$, being ordinary, determine a plane $\pi'$ that necessarily contains $D = e$.  The line $\overline{ea_2'}$ exists as a special line in $\pi'$.  Pick $a_3' \in \overline{ea_2'}$, different from $e$ and $a_2'$; the ordinary line $\overline{da_3'}$ intersects $\overline{ea_2'}$ at a point we name $a_4'$.  The line $\overline{da_4'a_e''}$ is in $\pi'$, so $a_4' \in \pi'$.  Thus, finally, the line $\overline{fa_4'}$ exists and intersects $\overline{da_2'}$ at a point we call $a_1'$.  This completes the construction of $S'$.

If $f=D$ (and similarly if $d=D$), the details are slightly different.  The lines $\overline{da_2'}$ and $\overline{ea_2'}$ are ordinary so they determine a plane $\pi'$.  The special line $\overline{fa_2'}$ meets $\pi'$ in a line of $\pi'$.  Pick $a_1' \in \overline{da_2'}$.  Then $\overline{fa_1'}$ meets $\overline{ea_2'}$ because the lines are lines of $\pi'$.  This point is $a_4'$.  Now $\overline{da_4'}$ intersects $\overline{fa_2'} \cap \pi'$ because both are lines of $\pi'$.  The point of intersection is $a_3'$.  Thus, the harmonic configuration $S'$ exists in this case.
\end{proof}

\begin{figure}[htb]
\includegraphics[width=7cm]{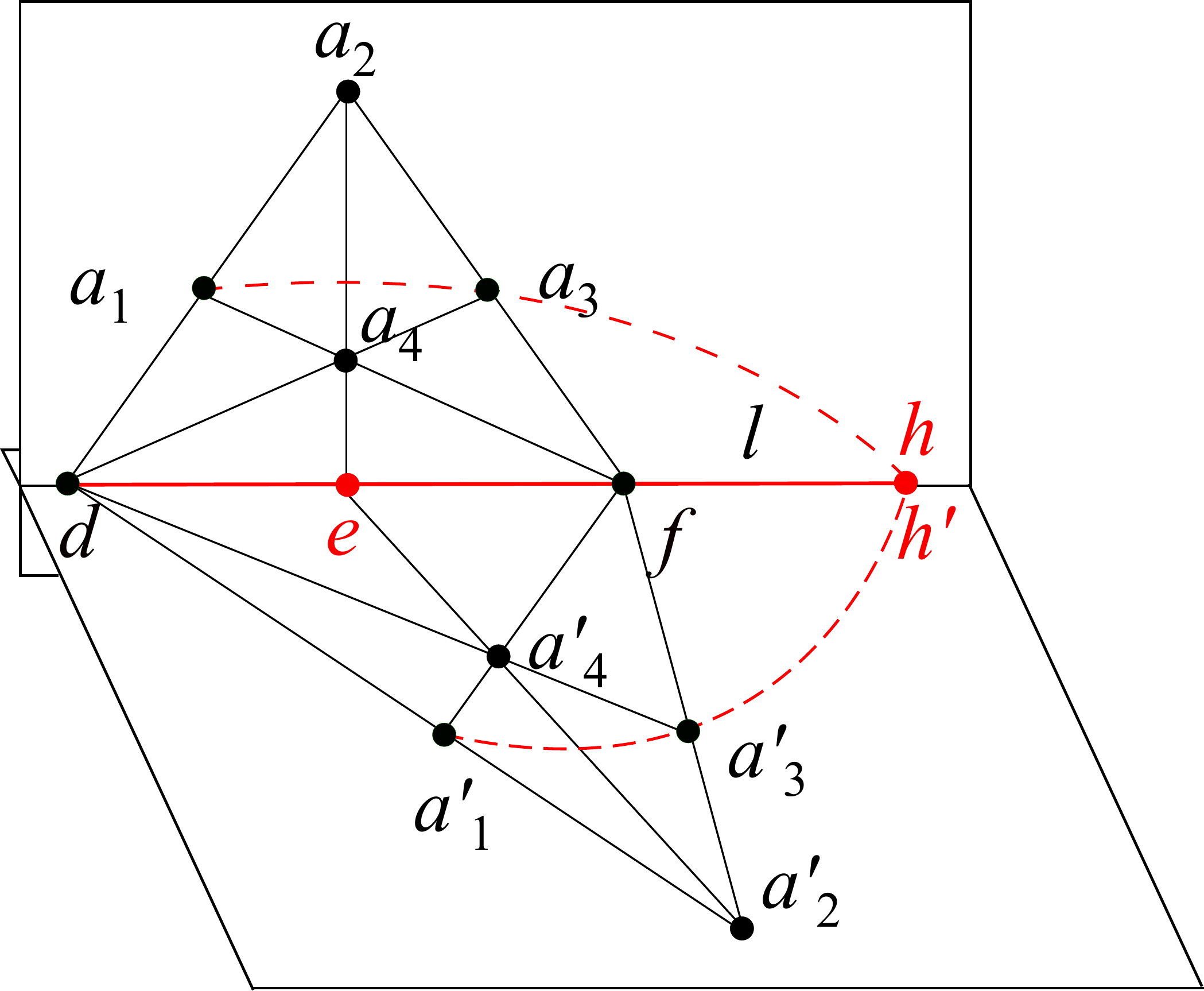}
\caption{Harmonic configurations in two planes that contain $l$, with a harmonic conjugate $h=h'$.}
\label{F:harmonic2planes}
\end{figure}

\begin{figure}[htb]
\includegraphics[width=7cm]{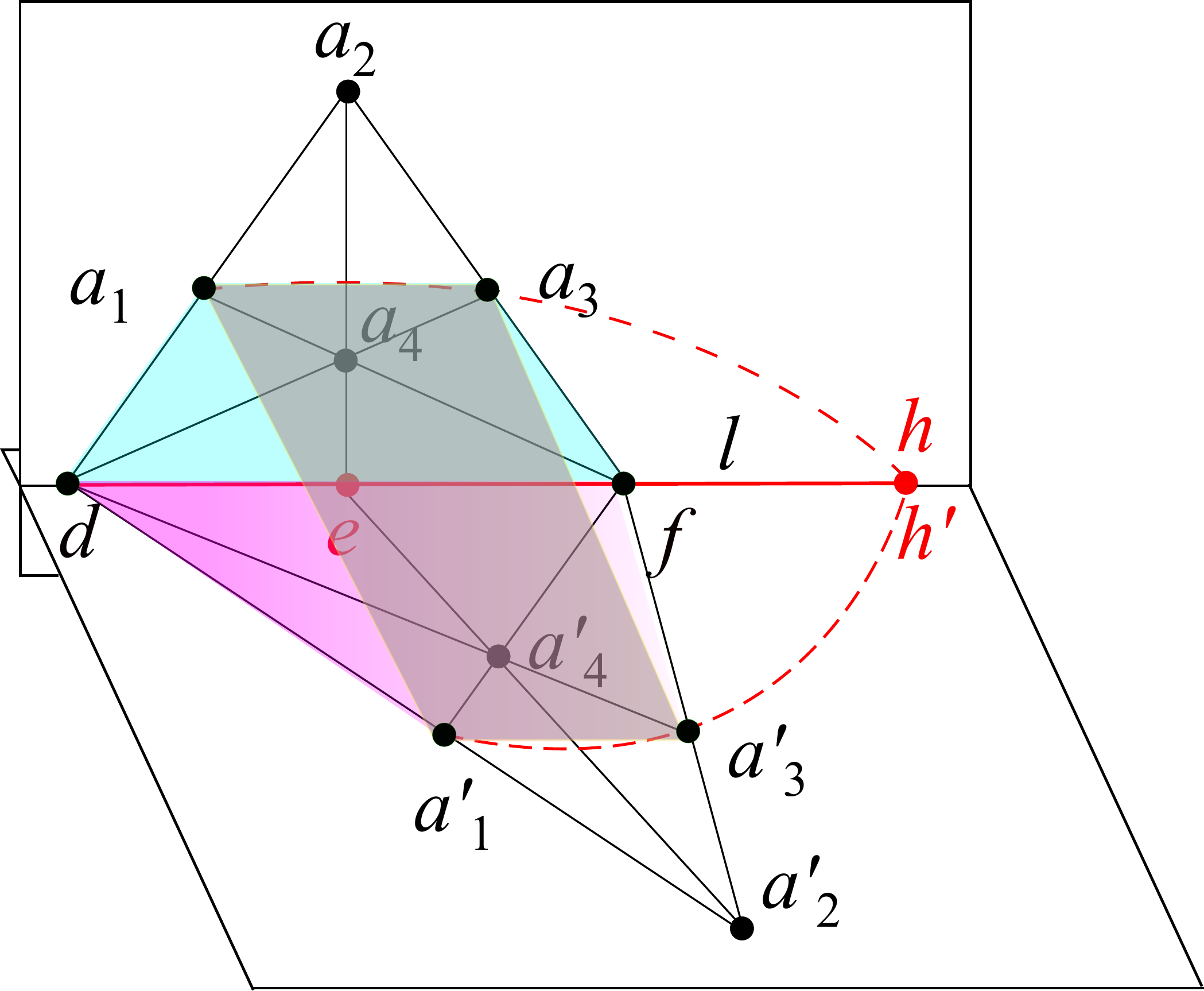}
\caption{The lines and planes between two harmonic configurations in two planes.}
\label{F:harmonic4lines}
\end{figure}

\sectionpage\section{Harmonic matroids}\label{sec:harmonicmatroids}

We now come to the third of our three key concepts: that of harmonic matroids (introduced in \cite{rflc}).
A matroid $M$ is \emph{harmonic} if a harmonic conjugate exists
for every harmonic configuration in $M$.
Familiar examples of harmonic matroids include Desarguesian planes
and all projective spaces of rank four or more.  Also, a
\emph{little Desargues plane} (also called a \emph{Moufang} or
\emph{alternative} plane) is a harmonic matroid;
in fact, a projective plane is a harmonic matroid if and only if it is a little Desargues plane \cite[page 202]{st}.  It follows that any projective plane contained in a harmonic matroid is a little Desargues plane.  

Full algebraic matroids are harmonic; Lindstr\"om \cite{lhc} defined them and proved this.  We repeat the definition here.
Let $\bF$ and $\bK$ be two algebraically closed fields such that $\bK$ is an extension of $\bF$ that has finite
transcendence degree over $\bF$.  The \emph{full algebraic
matroid} $A(\bK/\bF)$ has ground set $\bK$; a set $S \subseteq \bK$ is 
independent if and only if its elements are algebraically independent over $\bF$.  
A full algebraic matroid $A(\bK/\bF)$ of rank 3 has the following flats:
the field $\bF$ (the flat of rank 0, i.e., transcendence degree zero over $\bF$),
atoms or points (algebraically closed exensions of transcendence degree one over $\bF$), lines
(algebraically closed exensions of transcendence degree two), and one plane, the field $\bK$.

We shall discuss \emph{algebraic embedding} of a matroid $M$ over a
field $\bF$; that means a mapping $f$ from the elements of $M$
into an extension field of $\bF$, such that a subset $S$ is
independent in $M$ if and only if $f|_S$ is injective and $f(S)$ is
algebraically independent over $\bF$.  (Algebraic closure is not required for this concept.)

Theorem \ref{thm:harmonicconjugation} says a nontrivial projective rectangle is almost an example of a harmonic matroid, but the theorem leaves open the question of harmonic conjugates for triples that do not include $D$ but lie on a special line.  Does the theorem extend to all coplanar collinear triples?  
If so, all planes in the projective rectangle are little Desargues planes, even if the projective rectangle is not algebraically embeddable.  This is an open question.

The purpose of this section is to develop the machinery of modular sequences, which are a kind of finite or infinite sequence generated by repeated harmonic conjugation.

We write $\cl$ for the closure operator of a matroid.

\begin{lem} [{\cite[Proposition 2(ii) (proof) and Corollary 3]{rfhc}}]\label{tecnico}
If $\HP(y, x, z; o, q, r, s)$ and $\Hc(y,z ; x,x')$ in a harmonic matroid, then

\begin{enumerate}[{\rm(i)}]

\item\label{tecnicoPart1}  $\Hc(z,r ; q,q_1)$ and $\Hc(y,o ; q,q_2)$ for some points $q_1$
and $q_2$ in $\cl(\{ s, x'\})$,

\item\label{tecnicoPart2} $\Hc(o,r ; u, x')$ for some point $u$ in $\cl ( \{q, x \} )$.

\item\label{tecnicoPart3} $\Hc(y,r;s,t)$ for some point $t$ that is collinear
with $ q $ and $ x'$.

\end{enumerate}
\end{lem}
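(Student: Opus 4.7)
For each of the three assertions the plan is to exhibit one explicit harmonic configuration in $M$ whose base is the required triple and whose line $\overline{a_1'a_3'}$ passes through the claimed auxiliary line; since $M$ is harmonic, the conjugate exists and is independent of the witnessing configuration, so one such choice per claim suffices. A preliminary observation is that the eight points $\{y,x,z,x',o,q,r,s\}$ all lie in the rank-$3$ flat $\pi:=\cl(\{y,z,s\})$ (use $x\in\overline{yz}$, $o\in\overline{zs}$, $r\in\overline{ys}$, $q\in\overline{xs}$, and $x'\in l\cap\overline{or}$ by definition of $x'$), and that any two lines of $\pi$ meet in a matroid point by submodularity of rank; this legitimizes the auxiliary intersection points $u:=\overline{qx}\cap\overline{or}$ and $t:=\overline{qx'}\cap\overline{ysr}$ that appear below as genuine points of $M$.

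\textbf{The four configurations.} For $\Hc(z,r;q,q_1)$ I would take base $(z,q,r)$ with $(a_1',a_2',a_3',a_4')=(s,o,x',y)$; the five required collinearities $\{z,s,o\},\{z,y,x'\},\{q,y,o\},\{r,y,s\},\{r,x',o\}$ are exactly lines of the original $\HP$ together with $l$ and $\overline{orx'}$, and the resulting conjugate is $\overline{sx'}\cap\overline{zqr}\in\cl(\{s,x'\})$. The symmetric choice, base $(y,q,o)$ with auxiliaries $(s,r,x',z)$, handles $\Hc(y,o;q,q_2)$, with collinearities $\{y,s,r\},\{y,z,x'\},\{q,z,r\},\{o,z,s\},\{o,x',r\}$. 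For $\Hc(o,r;u,x')$ I would use base $(o,u,r)$ with auxiliaries $(z,s,y,q)$; the only nontrivial collinearity is $\{u,q,s\}$, which holds because $u\in\overline{qsx}$ by construction, and the conjugate is $l\cap\overline{or}=x'$. Finally, for $\Hc(y,r;s,t)$ I would take base $(y,s,r)$ with auxiliaries $(q,o,x',z)$: the collinearities $\{y,q,o\},\{y,z,x'\},\{s,z,o\},\{r,z,q\},\{r,x',o\}$ are again all among the known seven lines, and the conjugate is $\overline{qx'}\cap\overline{ysr}=t$, visibly collinear with $q$ and $x'$.

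\textbf{Main obstacle.} The substantive step is guessing the correct auxiliary assignments. Part (ii) is the most constrained, since the conjugate is forced to be $x'$: the only lines of the augmented configuration passing through $x'$ are $l$ and $\overline{orx'}$, and the latter coincides with the base line $\overline{our}$, so $\overline{a_1'a_3'}$ must be $l$, hence $\{a_1',a_3'\}\subseteq\{y,x,z\}$, after which the five collinearity requirements uniquely determine $a_2'$ and $a_4'$. The other cases are easier because $\overline{a_1'a_3'}$ need not be predetermined. Once the four configurations are written down, the rest is a straightforward matching of each five-line system against the seven known lines of the augmented configuration, and the harmonic matroid hypothesis delivers the conjugates named in \eqref{tecnicoPart1}--\eqref{tecnicoPart3}.
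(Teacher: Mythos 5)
First, a point of reference: the paper does not prove this lemma at all --- it is imported verbatim from \cite{rfhc} --- so there is no internal proof to compare against, and your attempt has to be judged on its own merits. On those merits, parts \eqref{tecnicoPart1} and \eqref{tecnicoPart3} are correct: each of the three configurations you exhibit for them uses only the seven lines $\overline{yxzx'}$, $\overline{yoq}$, $\overline{ysr}$, $\overline{xsq}$, $\overline{zso}$, $\overline{zrq}$, $\overline{orx'}$ of the augmented figure, the five required collinear triples check out in each case, and harmonicity of $M$ then \emph{produces} the conjugate on the line $\overline{a_1'a_3'}$ ($=\overline{sx'}$ or $\overline{qx'}$), which is exactly where the lemma wants it. In particular you do not need the preliminary intersection point $t$ to exist in advance; it comes out of the conjugation axiom.

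The genuine gap is in part \eqref{tecnicoPart2}, precisely at the step you relegate to a ``preliminary observation'': you assert that $u:=\overline{qx}\cap\overline{or}$ is a point of $M$ ``by submodularity of rank.'' Submodularity gives only the upper bound $\rk(L_1\cap L_2)\le \rk(L_1)+\rk(L_2)-\rk(L_1\cup L_2)\le 1$ for two distinct coplanar lines; it never produces a point. A harmonic matroid is just a matroid with a conjugation axiom, not a projective plane, and two coplanar lines in a matroid can perfectly well have empty intersection (disjoint $3$-point lines of $L_3$ itself, or any line pair in a uniform matroid, already show this). Since $u$ is a \emph{base} point of your fourth configuration $\HP(o,u,r;z,s,y,q)$ --- not its output --- it must be shown to exist before that configuration can be invoked, so part \eqref{tecnicoPart2} is not established as written. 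Indeed, manufacturing such intersection points is the whole reason this lemma exists in Lindstr\"om's and Fl\'orez's work; assuming them defeats the purpose.

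The repair stays entirely inside your own method: $u$ is itself a harmonic conjugate. The seven known lines contain the configuration $\HP(q,x,s;o,y,r,z)$ --- its five required triples $\{q,o,y\}$, $\{q,z,r\}$, $\{x,z,y\}$, $\{s,z,o\}$, $\{s,r,y\}$ are all among them --- so harmonicity yields $\Hc(q,s;x,u)$ with $u\in\overline{or}\cap\overline{qxs}$. This $u$ lies in $\cl(\{q,x\})$ and is collinear with $o$ and $r$; your configuration $\HP(o,u,r;z,s,y,q)$ is then legitimate, and its conjugate is $\overline{yz}\cap\overline{or}=x'$ exactly as you compute. With that one extra application of harmonicity inserted, the proof is complete.
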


Let $\mathcal {P} (M)$ be the power set of the ground set of a
harmonic matroid $M$.  We define $ \h \colon \mathcal {P} (M)
\rightarrow  \mathcal {P}(M) $ by $ \h(S) = S \cup \{ x \in M \colon
\Hc(a,c ; b, x) \text { for some } a, b, c \in S \}$, the iterates $\h^k (S) = h
(\h^{k-1} (S))$, $ \hi (S) = \bigcup _{n\ge 0} \h^k(S).$ We
call  $\hi (S)$ the \emph{harmonic closure} of $S$ in $M$.
Note that $\hi $ is an abstract closure operator.  If $ \hi (S) = S$, then we say $S$ is \emph{harmonically closed}.  For
$S\subseteq M$, $S\subseteq \hi (S) \subseteq \cl (S)$ so
rank$(S)=$ rank$(\hi(S))$.

For the remainder of this paper we work with a vector space $\bV$ over the field $\bbZ_p$ 
where $p$ is a prime number $p$ ($p$ always denotes a prime)---or in Section \ref{Infinitecase} over the field $\bbQ$.

We now define the matroid $L_p^\bV$.  Let
$$
A:=\{ a_g \mid g \in \bV \} \cup \{D \},\ B:=\{ b_g \mid g \in \bV \}
\cup \{D \}, \text{ and } C:= \{ c_g \mid g \in \bV \} \cup\{D \},
$$ 
where all points $a_g, b_g, c_g, D$ are distinct.  
Let $L_p^\bV$ be the simple matroid of rank 3 on the ground
set $E:= A\cup B\cup C$ defined by the $p^{2\dim\bV}+3$ rank-2
flats $A$, $B$, $C$ and $\{a_g, b_{g+ h}, c_h \}$ with $g$ and $h$ in $\bV$.  
When $k=\dim\bV$ is finite, we may write $L_p^k$ for $L_p^\bV$.  
An example of this family of matroids is the matroid $L_2^2$ described above.  Because in characteristic 2 every point is its own harmonic conjugate, $L_2^k$ is harmonically closed: $\hi(L_2^k)=L_2^k$.  We denote $L_p^1$ by $L_p$.  
(The fact that $L_p^k$ is the complete lift matroid $L_0((\bbZ_p)^k{\cdot}K_3)$ of the group expansion of the triangle graph $K_3$ by the group $(\bbZ_p)^k$---see \cite[Section 3]{b2}---was part of the inspiration for the present paper.)

Fl\'orez \cite{rfhc} proved that the matroid 
$R_{\mathrm{cycle}}[p]:=L_p\setminus \{c_2, \dots , c_{p-1} \}$ extends by
harmonic conjugation to $L_p$.  The matroid $R_{\mathrm{cycle}}[p]$ is known
as the \emph{Reid cycle matroid} \cite[page 52]{jk}.  In Section
\ref{sec:minimalmatroid} we generalize the definition of $R_{\mathrm{cycle}}[p]$
to $R_{\mathrm{cycle}}^k [p]$ and prove this matroid extends to $\hi(L_p^k)$.

We now introduce conjugate sequences in a harmonic matroid \cite{rfhc}.  
A \emph{conjugate sequence} is a sequence of points $A,a_0, \dots ,
a_m, \dots $ such that $\Hc(A,a_{i};a_{i-1},a_{i+1})$ for $i \in \bbZ_{> 0}$.  
We observe that the points of a conjugate sequence are collinear.  
A conjugate sequence is
\emph{modular} if at least one of its points is equal to one of the preceding points $a_i$.  
We say that $m$ is the \emph{order} of a modular sequence $A,a_0, a_1, \dots, a_m, \dots$   
if $a_m$ is the first point of the sequence that is equal to a preceding point.  The smallest possible value of $m$ is $2$, since three distinct collinear points are needed in order to have a harmonic confguration.
(Note that we use the word ``order" for both the ``order" of a modular sequence and the ``order" of a projective plane.)
For a modular sequence of order $m$ we use the short notation $A,a_0, a_1, \dots , a_{m-1}$, but the sequence is implicitly infinite, so we still have all harmonic conjugacies $\Hc(A,a_{i};a_{i-1},a_{i+1})$ for $i=m, m+1, \dots$.  
In Proposition \ref{modularsequence} Part \eqref{modularsequencem0} we prove that if $A,a_0, a_1, \dots , a_{m-1}$ is a modular sequence of order $m$, then $a_m=a_0$, $a_{m+1}=a_1$, \dots, i.e., a modular sequence is periodic.

For instance, consider $m=2$.  Then $a_2=a_0$ and the modular sequence is $A,a_0,a_1,a_0,a_1,\dots$.  The harmonic conjugacy property $\Hc(A,a_{i};a_{i-1},a_{i+1})$ becomes $\Hc(A,a_1;a_0,a_0)$ if $i$ is odd and $\Hc(A,a_0;a_1,a_1)$ if $i$ is even.  An example is harmonic conjugation in the Fano plane.

Another example is illustrated by Figure \ref{F:tecnico1} (in different notation).  The modular sequence is $a_0,b_0,c_0,x_2,x_3,x_4,\dots$ (with $a_0$ in the role of $A$).  In this example $m=5$ and $x_5=b_0$, $x_6=c_0$, $x_7=x_2$, etc.

\begin{lem}[Fl\'orez \cite{rfhc}]\label{modmobius}
A modular sequence in a harmonic matroid $M$ satisfies $\hi (S)= S$.
\end{lem}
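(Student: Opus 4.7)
The plan is to show that the one-step enlargement $\h(S)$ equals $S$; since $S\subseteq\h(S)$ trivially, this gives $\h^k(S)=S$ for all $k$ by induction, whence $\hi(S)=S$. Throughout, $S=\{A,a_0,\dots,a_{m-1}\}$ denotes the set of distinct points of the modular sequence of order $m$.

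The first step is geometric. Each conjugacy $\Hc(A,a_i;a_{i-1},a_{i+1})$ forces $A,a_{i-1},a_i,a_{i+1}$ to be collinear, so inductively all points of the sequence lie on a single line $l$. By the periodicity already established in Proposition \ref{modularsequence}, $S$ is finite, and since $\hi(S)\subseteq\cl(S)=l$, every harmonic conjugate I could form from points of $S$ already lies on $l$. What remains is to show that such a conjugate is one of the listed $a_\ell$ (or $A$), not some other point of $l$.

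The second step, which is the heart of the argument, handles conjugacies with $A$ as one of the outer points. I would prove by induction on the cyclic distance $|j-i|\bmod m$ that $\Hc(A,a_j;a_i,a_{2j-i\bmod m})$ for every pair of distinct indices $i,j\in\bbZ/m\bbZ$. The base case $|j-i|=1$ is a defining conjugacy of the sequence. The inductive step requires a splicing principle: given $\Hc(A,a_{j-1};a_i,a_{2(j-1)-i})$ (by induction) and $\Hc(A,a_j;a_{j-1},a_{j+1})$ (from the sequence), I want to derive $\Hc(A,a_j;a_i,a_{2j-i})$. I would obtain this by invoking Lemma \ref{tecnico}\eqref{tecnicoPart1} on a carefully chosen auxiliary configuration whose seven points include $A$ and three points of $S$ on $l$, with the remaining auxiliary points picked off $l$ so that the figure of that lemma identifies the two known conjugacies above as its input data and pins down the desired one as its output; periodicity then reduces the resulting index modulo~$m$.

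The third step treats the case where none of the three input points is $A$. Given $\Hc(a_i,a_k;a_j,h)$ with $h\in l$, I would build a harmonic configuration realizing this conjugacy with auxiliary points chosen so that $A$ appears naturally in the diagram, then apply Lemma \ref{tecnico}\eqref{tecnicoPart2} or \eqref{tecnicoPart3} to express $h$ as the output of a conjugacy that has $A$ as one of its outer points. The second step then forces $h=a_\ell$ for some $\ell$, and uniqueness of the harmonic conjugate in a harmonic matroid locks this down.

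The main obstacle is clearly the splicing principle in the second step: abstractly, harmonic conjugation is a ternary relation and does not obviously compose, so showing that ``doubling the index gap'' really yields the correct harmonic conjugate requires a genuinely geometric argument, with no coordinates available. Lemma \ref{tecnico} is exactly designed to supply this kind of splicing, but the bookkeeping in constructing the auxiliary configuration and identifying its ingredients with the sequence's data is delicate; this is presumably where the bulk of the work in Fl\'orez \cite{rfhc} lies.
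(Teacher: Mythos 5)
The paper does not actually prove this lemma: it is imported verbatim from Fl\'orez \cite{rfhc}, so there is no in-paper argument to compare yours against. Judged on its own terms, your outline heads in a reasonable direction (reduce to $\h(S)=S$, use uniqueness of the conjugate to pin each new point down, and use Lemma \ref{tecnico} to propagate conjugacies along the line), but as written it has concrete gaps and does not constitute a proof.

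First, the step you yourself flag as the heart --- the ``splicing principle'' that $\Hc(A,a_{j-1};a_i,a_{2(j-1)-i})$ together with $\Hc(A,a_j;a_{j-1},a_{j+1})$ yields $\Hc(A,a_j;a_i,a_{2j-i})$ --- is asserted rather than carried out. Lemma \ref{tecnico}\eqref{tecnicoPart1} does not directly output a conjugacy among three collinear points of $S$: its conclusions involve the auxiliary off-line points $o,q,r,s$ (e.g.\ $\Hc(z,r;q,q_1)$ with $q_1\in\cl(\{s,x'\})$), so one must actually construct a configuration whose off-line points link the two known conjugacies and then intersect back with the line $l$; without that construction the induction does not close, and that is precisely where the content of the lemma lies. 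Second, your case analysis is incomplete: step 2 handles triples with $A$ as an outer point and step 3 handles triples avoiding $A$, but the triple $\Hc(a_i,a_k;A,x)$ --- conjugating the point $A$ itself --- falls into neither case. Third, you appeal to the periodicity in Proposition \ref{modularsequence}\eqref{modularsequencem0}, which appears after this lemma and whose hypotheses already posit that $\hi(d,b_0,b_1)$ equals the modular sequence (and that an auxiliary configuration $T$ exists); using it here is a forward reference that risks circularity, so the finiteness and periodicity of $S$ would have to be established independently or taken from \cite{rfhc}.
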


\begin{figure}[htbp]
\begin{center}
\includegraphics[scale=.6]{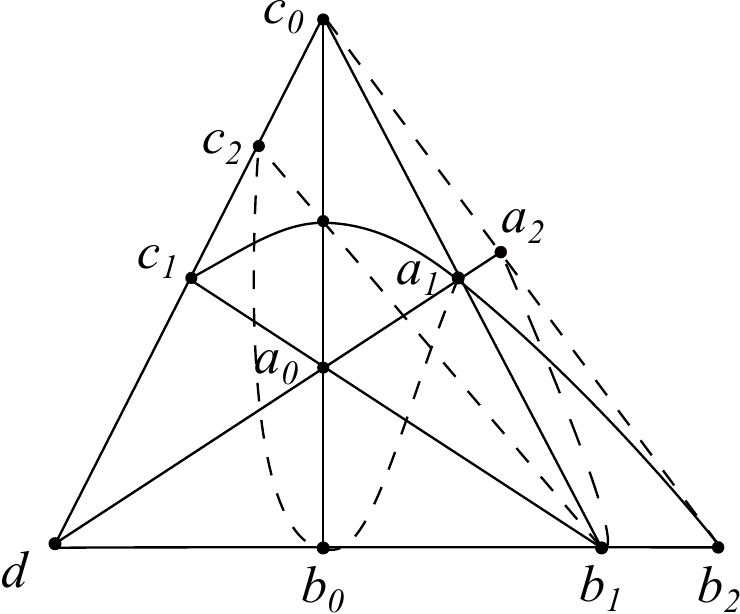}
\caption{The harmonic configuration $T$ in Proposition \ref{modularsequence} for $m=3$ (solid lines) along with the additional points $a_i$, $b_i$, and $c_i$ (where $i=2=m-1$).}
\label{F:T}
\end{center}
\end{figure}

\begin{prop}\label{modularsequence} Suppose that $\hi(d, b_0, b_1) = (d, b_0, \dots , b_{m-1})$
is a modular sequence of order $m$ in a harmonic matroid $M$.  
If $c_1, c_0, a_1,a_0$ are points of $M$ with $T:=\HP(d, b_0, b_1; c_{1},c_0, a_1,a_0)$, 
then the following properties hold:
\begin{enumerate}[{\rm(i)}]
\item \label{modularsequencemprime} $m$ is a prime number.

\item \label{modularsequencem0} We have $b_{m+i} = b_i$ for all $i\geq0$.

\item\label{modularsequence:i} $\pi:=\hi(T)$ is a Desarguesian projective plane
of order $m$ in $M$.

\item\label{modularsequence:iii} There are points $a_0, \dots , a_{m-1}$ in $M$ such that
$d,a_0, \dots , a_{m-1}$ is a modular sequence of order $m$ and $\{a_i, b_i, c_0\}$ is a
collinear set.  

\item\label{modularsequence:5} $\hi(d, b_0, b_1)$ is a line of $\pi$.

\item\label{modularsequence:ii}  If $l$ is a line in $T$, then $\hi(l)$ is a line of $\pi=\hi(T)$.

\end{enumerate}
\end{prop}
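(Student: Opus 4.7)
The plan is to iterate harmonic conjugation starting from the configuration $T$ and identify $\pi:=\hi(T)$ as a full projective subplane of $M$ of order $m$. Then the standing fact cited at the start of Section~\ref{sec:harmonicmatroids}---that any projective plane sitting inside a harmonic matroid is a little Desargues plane---together with the classical coordinatization of such a plane, delivers the remaining structural statements.

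I would begin with part~(ii). By the minimality built into the definition of order, $b_m=b_j$ for some $j$ with $0\le j<m$, and my claim is $j=0$. Otherwise $j\ge 1$; rolling the harmonic relation $\Hc(d,b_i;b_{i-1},b_{i+1})$ both forward and backward and invoking the involutive symmetry of harmonic conjugates produces $b_{m-1}=b_{j-1}$, contradicting the minimality of $m$. Hence $b_m=b_0$, and an induction on $i$ using the defining harmonic relation then gives $b_{m+i}=b_i$ for all $i\ge 0$.

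For parts~(iii)--(vi) I would iterate harmonic conjugation inside $T$. Lemma~\ref{tecnico}(i) transports each step $\Hc(d,b_i;b_{i-1},b_{i+1})$ on the line $\overline{db_0b_1}$ along the pencil at $d$ to a corresponding step on each of the other two lines of $T$ through $d$, producing modular sequences $d,a_0,a_1,\dots$ and $d,c_0,c_1,\dots$ whose orders must equal $m$ (by running the correspondence in both directions). Lemma~\ref{tecnico}(iii) then transports the incidence $\{a_0,b_0,c_0\}$ along one harmonic step at a time to give $\{a_i,b_i,c_0\}$ collinear for every $i$, which is~(iv). Lemma~\ref{lem:harmonicconfigplane} keeps each newly generated point inside the unique plane $\pi$ containing $T$. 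Once the three modular sequences at $d$ are in hand, a configuration count produces $m+1$ points on every line and $m^2+m+1$ points total, identifying $\hi(T)$ with a full projective subplane of order $m$, which is~(iii) modulo Desarguesianness. Lemma~\ref{modmobius} identifies $\hi(d,b_0,b_1)$ as a single line of that plane, giving~(v); and~(vi) follows by the same argument after relabeling, since any line of $T$ can serve as the starting line.

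The delicate piece is~(i), together with the Desarguesianness in~(iii). Here I would coordinatize the little Desargues plane $\pi$ over its coordinate alternative division ring $K$, sending $(d,b_0,b_1)$ to $(\infty,0,1)$ on the chosen line. The classical harmonic-conjugation identity then yields $b_i=i\cdot 1$ inductively, so $m$ equals the additive order of $1\in K$, i.e., the characteristic of $K$, which is therefore a prime $p$. Because an alternative division ring of prime order is the field $\bbF_p$, the plane $\pi$ is the Desarguesian plane of order $m=p$, finishing~(i) and~(iii). The main obstacle in carrying out this plan is the bookkeeping: ensuring that the three harmonic-conjugation processes at $d$ stay synchronized throughout the modular period and that no inductively generated point collides prematurely with an earlier one. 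This synchronization is exactly what Lemma~\ref{tecnico} is engineered to provide.
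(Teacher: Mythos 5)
Your overall architecture (generate the plane by iterated conjugation, then coordinatize to get primality) is reasonable, but two steps as written do not go through. First, your proof of part (ii) claims that from a single coincidence $b_m=b_j$ with $j\ge 1$ you can ``roll backward'' to get $b_{m-1}=b_{j-1}$. The recursion $\Hc(d,b_i;b_{i-1},b_{i+1})$ is second order: $b_{i+1}$ is determined by the \emph{pair} $(b_{i-1},b_i)$, and even granting involutivity of abstract harmonic conjugation, inverting one step requires knowing two consecutive terms. Knowing only $b_m=b_j$ gives you one term, so you cannot recover $b_{m-1}$, and the claimed contradiction with the minimality of $m$ evaporates. The paper closes exactly this gap by first running the induction $P(t)$ that constructs the companion points $a_t$ with $\{c_1,a_{t-1},b_t\}$ collinear; then $b_m=b_t$ with $0<t<m$ forces $\{c_1,a_{t-1},a_{m-1}\}$ to be collinear, hence $c_1\in\overline{a_0a_1}$, contradicting the non-collinearity of $\{c_1,a_0,a_1\}$ in $T$. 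So the auxiliary $a$-sequence of part (iv) is not a by-product of the construction: it is the tool that proves $b_m=b_0$.

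Second, your statement that ``a configuration count produces $m+1$ points on every line and $m^2+m+1$ points total, identifying $\hi(T)$ with a full projective subplane of order $m$'' is precisely the hard content of part (iii), and you give no argument for it; the paper outsources it to \cite[Proposition 16]{rfhc} after observing that $\{d,a_0,\dots,a_{m-1},b_0,\dots,b_{m-1},c_0,c_1\}$ is a copy of the Reid cycle matroid $R_{\mathrm{cycle}}[m]$ embedded in $M$. Relatedly, your route to the primality of $m$ via coordinatizing $\pi$ over its alternative division ring is genuinely different from the paper's, which instead quotes \cite[Theorem 10]{rflc} (a Reid cycle matroid $R_{\mathrm{cycle}}[m]$ embeddable in a harmonic matroid forces $m$ prime) and only afterwards identifies $\pi$ as Desarguesian via Artin--Zorn. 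Your coordinatization argument could be made to work, but only after the plane structure of $\hi(T)$ has actually been established, so it cannot substitute for the missing count; as written, parts (i)--(iii) all rest on an asserted, unproved step.
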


\begin{proof} 
Much of the proposition can easily be proven by modifying the hypothesis in the proof of Proposition 16 in \cite{rfhc}, but since it plays an  important role in this paper we include a complete proof.

Let $d, b_0, \dots , b_{m-1}$ be all the points of the modular sequence  
$\hi(d, b_0, b_1)$, so $\Hc(d,b_1;b_0,b_2)$, $\Hc(d,b_2;b_1,b_3)$, $\dots, \Hc(d,b_{m-1}; b_{m-2},b_m)$   
and $b_m$ is the first point that equals one of its preceding points.

Most of the proof is based upon a property $P(t)$, proved using mathematical induction.  For $t \in \bbZ_{\geq0}$, let  $P(t)$  
be the statement that there is a point $a_t$ in $M$ such that $\{a_t,b_t, c_0\}$, $\{a_t,b_{t+1},c_1\}$ and  
$\{d,a_0, \dots, a_t\}$ are collinear sets.  $P(0)$ is the statement that $T$ exists.  $P(1)$ is the statement that $b_2$ is the harmonic conjugate of $b_0$ with respect to $d$ and $b_1$, obtained by the configuration $T$, which follows from the assumption that $d, b_0, b_1, \dots$ is a conjugate sequence.

We now suppose that $t \geq 2$ and that $P(t-2)$ and $P(t-1)$ hold, and we deduce $P(t)$.
From  the modular sequence $\hi(d, b_0, b_1)$, the configuration $\HP(d, b_0, b_1; c_1, c_0, a_1, a_0)$, $P(t-2)$, and $P(t-1)$ 
we deduce that
$$\HP( d, b_{t-2}, b_{t-1}; c_1, c_0, a_{t-1}, a_{t-2}).$$
From $P(t-1)$ we deduce that $\{c_1,a_{t-1},b_t\}$ is a collinear set.  Thus, $b_t$ is the intersection of the line $\overline{c_1a_{t-1}b_t}$ with the line $\overline{\hi(d, b_0, b_1)}$, which implies $\Hc(d,b_{t-1};b_{t-2},b_t)$.   
Therefore, by 
Lemma \ref{tecnico} Part \eqref{tecnicoPart3} there is a point $a_t$ such that $ \{a_t, b_{t}, c_0\}$ and 
$\{d, a_{t-2}, a_{t-1}, a_t\}$ are collinear sets.  Thus, this configuration holds: 
$$\HP( d, b_{t-1}, b_t; c_1, c_0, a_t, a_{t-1}).$$
It follows that there is a point $b_{t+1}$ such that $\Hc(d,b_t;b_{t-1},b_{t+1})$ and $\{c_1,a_t,b_{t+1}\}$ is a
collinear set.  These prove $P(t)$.  

Most of Part \eqref{modularsequence:iii} follows because we have the configuration $\HP(d,a_{t-2},a_{t-1};c_1,c_0,b_{t-1},b_t)$ and collinearity of $\{c_1,b_{t-1},a_t\}$ and of $\{d, a_{t-2}, a_{t-1}, a_t\}$, which imply the harmonic conjugacy $\Hc(d,a_{t-1};a_{t-2},a_t)$.  We note that $b_t=b_i$ if and only if $a_t=a_i$ because of the collinear sets $\{c_0,a_t,b_t\}$ and $\{c_0,a_i,b_i\}$ (from $P(t)$ and $P(i)$, respectively) and the fact that the lines $\overline{da_0a_1\dots}$ and $\overline{db_0b_1\dots}$ are distinct. 

We now prove Parts \eqref{modularsequencemprime} and \eqref{modularsequencem0} and that $\hi (\HP(d, b_0, b_1; c_1, c_0, a_1, a_0))$ is a projective plane of prime order.

By the definition of harmonic conjugation, $b_m \ne d$.  Suppose that $b_m = b_t$ for some 
$t \in \{1, \dots , m-1\}$.  Since $\{c_1,a_{t-1},b_{t}\}$ and  $\{c_1,a_{m-1},b_m \}=\{c_1,a_{m-1},b_t \}$ are collinear (by $P(t-1)$ and $P(m-1)$, respectively), the set $\{c_1,a_{t-1}, a_{m-1} \}$ is collinear.  
Since $a_{t-1} \ne a_{m-1}$ (because $b_{t-1} \ne b_{m-1}$, because $t-1<m-1<m$), we conclude that $c_1$ is in the line $\overline{a_{t-1} a_{m-1}} = \overline{a_0a_1}$.  
That is a contradiction because $\{c_1,a_0, a_1 \}$ is not collinear in $\HP(d, b_0, b_1; c_1, c_0, a_1, a_0)$.  It follows that $b_m \neq b_t$ for $0<t<m$, so $b_m = b_0$.  We conclude that $a_m=a_0 \ne a_t$ for $0<t<m$; thus, $d, a_0,a_1,\dots$ has order $m$.

That proves the first step of Part \eqref{modularsequencem0}.  For the rest, for $t\geq0$ let $Q(t)$ be the statement that $b_{t+m} = b_{t+j}$ where $0 \le j < m$ if and only if $j=0$.  We apply induction on $t\ge0$.  $Q(0)$ was just proved.  Let $t>0$ and consider the harmonic relations $\Hc(d,b_{t-1};b_{t-2},b_t)$ and $\Hc(d,b_{m+t-1};b_{m+t-2},b_{m+t})$, which is the same as $\Hc(d,b_{t-1};b_{t-2},b_{m+t})$ by the induction assumption.  Thus, $b_t=b_{m+t}$ are equal by uniqueness of the harmonic conjugate.  Now, compare $b_{t+m}=b_t$ to $b_{t+j} = b_{t-m+j}$; they are unequal by induction.  That completes the proof of Part \eqref{modularsequencem0}.  Since $d, b_0, b_1, \dots$ is any modular sequence of order $m$, the same conclusion applies to the sequence $d,a_0,a_1,\dots$.

Note that $M|E$ where $E= \{d,a_0, \dots , a_{m-1}, b_0, \dots ,
b_{m-1}, c_0, c_1 \}$ is the matroid $R_{\text{cycle}}[m]$.  Thus
$R_{\text{cycle}}[m]$ is embedded in a harmonic matroid.  By
\cite[Theorem 10]{rflc}, $m$ must be a prime number  (this proves Part \eqref{modularsequencemprime}). 
By the definition of $E$ we deduce that
$$\pi:= \hi ( \{d, b_0, b_1; c_1, c_0, a_1, a_0\}) = \hi (M|E) =  \hi (R_{\text{cycle}}[m]).$$ Therefore, by
\cite[Proposition 16]{rfhc}, $\hi(R_{\text{cycle}}[m])$ is a projective plane of order $m$.
The plane is a little Desargues plane because it is contained in the harmonic matroid $M$, and since it is finite, it is a Desarguesian plane coordinatized by a finite field \cite[Theorem 6.20]{HP}, specifically the prime field $\bbZ_m$.

Part \eqref{modularsequence:5} follows from the fact that $\hi(d, b_0, b_1)$ is a collinear set in $\pi$ of $m+1$ points, the size of a line in $\pi$ by Part \eqref{modularsequence:i}.

We prove Part \eqref{modularsequence:ii}.  
From Parts \eqref{modularsequencemprime} and \eqref{modularsequence:i} we know that $l$ is a set of collinear points in the Desarguesian plane $\pi$ of prime order $m$.  Let $\bar(l)$ denote the line of $\pi$ that contains $l$.  If $l$ does not have three points, add one point of $\bar(l)$ so we have three collinear points.  Let those points be called $d', b_0', b_1'$.  Extend $d', b_0', b_1'$ to a configuration $T'$ isomorphic (as a matroid) to $T$ with the correspondence $d', b_0', b_1' \leftrightarrow d, b_0, b_1$.  Extend these three points to a conjugate sequence in $\pi$, which is necessarily finite, hence modular of order $m'$.  By Part \eqref{modularsequence:i}, $\hi(T')$ is a projective plane $\pi'$ of order $m'$ and is contained in $\pi$.  By Part \eqref{modularsequence:5}, $\hi(d', b_0', b_1')$ is a line of $\pi'$.  By projective geometry, $\pi'=\pi$; thus, $\hi(d', b_0', b_1')$ is a line of $\pi$.
\end{proof}

The proof of Part \eqref{modularsequence:ii} establishes a stronger statement, which may be a known theorem, but we have not seen it.

\begin{cor}\label{conjugatesequence-p}
Any conjugate sequence in a Desarguesian plane of prime order $p$ is modular of order $p$.
\end{cor}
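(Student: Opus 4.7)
\textbf{The plan} is to deduce the corollary directly from Proposition \ref{modularsequence} together with the fact that $\bbF_p$ has no proper subfield. The argument closely mirrors the proof of Part \eqref{modularsequence:ii} of that proposition.

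Let $A, a_0, a_1, \ldots$ be a conjugate sequence in a Desarguesian plane $\pi_0$ of prime order $p$. Since $\pi_0$ is Desarguesian, it is a harmonic matroid (as noted at the start of Section \ref{sec:harmonicmatroids}), so each harmonic conjugate called for in the sequence genuinely exists inside $\pi_0$. All of the $a_i$ lie on the single line through $A, a_0, a_1$, which has only $p+1$ points, so the sequence must eventually repeat and is therefore modular of some order $m \geq 2$.

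By the definition of a conjugate sequence, the relation $\Hc(A, a_1; a_0, a_2)$ is witnessed by a harmonic configuration $T = \HP(A, a_0, a_1; c_1, c_0, a_1', a_0')$ inside $\pi_0$. I would then apply Proposition \ref{modularsequence} with $(d, b_0, b_1) = (A, a_0, a_1)$: Part \eqref{modularsequencemprime} immediately yields that $m$ is prime, and Part \eqref{modularsequence:i} yields that $\pi := \hi(T)$ is a Desarguesian projective plane of order $m$ contained in $\pi_0$.

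The final step, and the main obstacle, is to force $\pi = \pi_0$, which then gives $m = p$. This is where primality enters: the Desarguesian plane $\pi_0$ is coordinatized by $\bbF_p$, and because $\bbF_p$ has no proper subfield, $\pi_0$ has no proper Desarguesian subplane, so $\pi = \pi_0$. If one preferred to avoid citing the classical subfield--subplane correspondence, an alternative is to coordinatize the line through $A$ as the projective line over $\bbF_p$ with $A$ at infinity: the relation $\Hc(\infty, a_i; a_{i-1}, a_{i+1})$ then forces $a_{i+1} = 2a_i - a_{i-1}$, so the $a_i$ form an arithmetic progression in $\bbF_p$ with nonzero common difference $a_1 - a_0$, and such a progression has period exactly $p$.
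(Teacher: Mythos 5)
Your proposal is correct, and your main route is essentially the paper's own: the corollary is extracted from the proof of Proposition \ref{modularsequence} Part \eqref{modularsequence:ii}, which likewise notes that the sequence lives on a finite line and is therefore modular of some order $m$, invokes Parts \eqref{modularsequencemprime}, \eqref{modularsequence:i} and \eqref{modularsequence:5} to get a Desarguesian subplane of order $m$ having the sequence as a full line, and then concludes ``by projective geometry'' that the subplane is the whole plane, forcing $m=p$. You make explicit the reason the paper leaves terse, and your reason is the right one, though I would state it as ``$PG(2,p)$ has no proper subplane at all'' rather than ``no proper Desarguesian subplane'': Bruck's bound $p\ge m^2+m$ does not by itself exclude, say, a subplane of order $3$ in $PG(2,13)$, but any subplane contains a quadrangle and is closed under join and meet, and a quadrangle generates the subplane coordinatized by the prime subfield, which for $\bbF_p$ is the entire plane. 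Your alternative argument is genuinely different from the paper's and is the most self-contained: coordinatizing the line over $\bbF_p$ with $A$ at infinity, the relation $\Hc(\infty,a_i;a_{i-1},a_{i+1})$ becomes cross-ratio $-1$, so $a_{i+1}=2a_i-a_{i-1}$ and the sequence is an arithmetic progression with nonzero common difference, hence of period exactly $p$ (including $p=2$, where each point is its own conjugate). This bypasses Proposition \ref{modularsequence} entirely, at the cost of invoking the classical identification of the quadrangle construction of the harmonic conjugate with cross-ratio $-1$ in a pappian plane, and it does not yield the accompanying subplane information that the proposition-based route gives for free.
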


\sectionpage\section{A projective rectangle in a harmonic matroid}\label{sec:PRinharmonic}

In this section we show that if $L_p^\bV$ (where $p$ is prime) is embedded in a harmonic matroid, its harmonic closure is a projective rectangle.  Thus, if a harmonic matroid $M$ contains $L_p^\bV$, then it contains a projective rectangle.  
(From \cite[Section 4]{b4} it is known that $L_p^\bV$ does embed in some harmonic matroids, if $\bV$ is the additive group of a field of characteristic $p$; namely, the projective plane over that field.)  
In particular, if  $L_p$ embeds in $M$, then $M$ contains the Desarguesian projective plane of order $p$.  
We state the main theorem, whose proof appears at the conclusion of the section.

\begin{thm} \label{ThmHarmonicPR}
For each embedding of $L_p^\bV$ in a harmonic matroid, $\hi(L_p^\bV)$ is a projective rectangle of order $(p,|\bV|)$ in which every plane is Desarguesian of order $p$.
\end{thm}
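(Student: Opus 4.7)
My plan is to present $\hi(L_p^\bV)$ as a union of Desarguesian sub-planes of order $p$, each produced from a sub-$L_p$ of $L_p^\bV$ via Proposition~\ref{modularsequence}, and then to verify the six projective rectangle axioms using this local structure.

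First, for each nonzero $v\in\bV$ and each $g_0,h_0\in\bV$, the subset
\[
E(v;g_0,h_0)=\{a_{g_0+tv},\,b_{g_0+h_0+tv},\,c_{h_0+tv}:t\in\bbZ_p\}\cup\{D\},
\]
with the rank-$2$ flats inherited from $L_p^\bV$, is a sub-matroid isomorphic to $L_p$: its restricted ordinary lines are $\{a_{g_0+tv},\,b_{g_0+h_0+(t+u)v},\,c_{h_0+uv}\}$ for $t,u\in\bbZ_p$. Inside $E(v;g_0,h_0)$ sits an explicit harmonic configuration $\HP(D,a_{g_0},a_{g_0+v};b_{g_0+h_0+v},b_{g_0+h_0},c_{h_0-v},c_{h_0})$, and harmonic conjugation along $A$ produces the modular sequence $\{D,a_{g_0+tv}:t\in\bbZ_p\}$ of order $p$. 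Proposition~\ref{modularsequence} (together with Fl\'orez's $R_{\text{cycle}}[p]$-extension result) then gives $\pi(v;g_0,h_0):=\hi(E(v;g_0,h_0))$ as a Desarguesian projective plane of order $p$ in $M$, with $D$ as a point and three of its $p+1$ lines through $D$ lying in $A$, $B$, $C$. Every ordinary line of $L_p^\bV$ sits in some $\pi(v;g_0,h_0)$ (take $g_0=g$, $h_0=h$, any $v\neq 0$).

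Next I would define the incidence structure $\PR$: points are $\hi(L_p^\bV)$, lines are the $M$-lines meeting $\hi(L_p^\bV)$ in at least two points, the special point is $D$, and the special lines are those through $D$. Inside each $\pi(v;g_0,h_0)$ there are $p+1$ lines through $D$: three are the restrictions of $A$, $B$, $C$, and the other $p-2$ are ``new.'' Using uniqueness of harmonic conjugates in $M$ to match the new lines across overlapping planes, they glue into exactly $p-2$ further global special lines of $M$, each containing one ordinary point for every $g\in\bV$. Hence there are $p+1$ special lines of $|\bV|+1$ points each, yielding order $(p,|\bV|)$. An induction on harmonic-closure iterates, together with the observation that any harmonic configuration in $L_p^\bV$ fits inside some $E(v;g_0,h_0)$, confirms $\hi(L_p^\bV)=\bigcup_{v,g_0,h_0}\pi(v;g_0,h_0)$.

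The axioms then follow from this setup. (A\ref{Axiom:A1})--(A\ref{Axiom:A4}) are routine: rank $3$ in $M$ gives unique lines; $\{D,a_0,b_0,a_1\}$ witnesses (A\ref{Axiom:A2}); $p,|\bV|\geq 2$ gives (A\ref{Axiom:A3}); $D$ is chosen as special. For (A\ref{Axiom:A5}), two special lines meet only at $D$, while a special line $s$ and an ordinary line $\ell$ lie in a common plane $\pi(v;g_0,h_0)$, where $s\cap\pi$ is a line of $\pi$ through $D$ meeting $\ell$ in exactly one point. For (A\ref{Axiom:A6}), two intersecting ordinary lines lie together in a plane from the first step, where Pasch follows from the Desarguesian projective plane structure; the final clause of the theorem (all planes Desarguesian of order $p$) is then immediate from Step~1.

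The main obstacle is the special-line count: showing that the ``new'' lines through $D$ in distinct sub-planes really do coalesce into exactly $p-2$ complete global special lines of $|\bV|+1$ points each, rather than something larger or less structured. Both the coalescence across overlapping planes and the count require delicate tracking of harmonic conjugates, making essential use of the $\bbZ_p$-vector-space structure of $\bV$ and the uniqueness of harmonic conjugates in $M$. A related subtlety is confirming that iterated harmonic conjugation applied to three-point subsets of $L_p^\bV\cap A$ (and similarly for $B$ and $C$) does not introduce points outside $L_p^\bV\cap A$, which would inflate the order of the special lines.
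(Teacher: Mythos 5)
Your overall architecture matches the paper's: decompose $L_p^\bV$ into embedded copies of $L_p$ (your $E(v;g_0,h_0)$ are exactly the sub-copies $A_\epsilon\cup B_\epsilon\cup C_\epsilon$ the paper uses), apply Proposition~\ref{modularsequence} to get Desarguesian planes of order $p$, glue the planes' lines through $D$ into $p+1$ global special lines, and verify the axioms. However, there is a genuine gap: the step you yourself label the ``main obstacle'' --- that the new points produced by harmonic conjugation in different sub-planes coalesce into exactly $p-2$ well-defined special lines $R_2,\dots,R_{p-1}$, each meeting every modular sequence $\hi(a_g,c_h)$ in one point --- is not an obstacle you have overcome but the actual mathematical content of the theorem, and you leave it unproved. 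In the paper this is Lemma~\ref{tecnico1}, proved by an explicit induction: at stage $t$ one exhibits the configuration $\HP(a_0,x_{t-1},x_t;\,c_{t\epsilon},b_{t\epsilon},a_\epsilon,a_{\epsilon\cdot((t-1)^{-1}+1)})$ for \emph{every} $\epsilon\in\bV$ simultaneously, so that uniqueness of the harmonic conjugate in $M$ forces the same point $x_{t+1}$ to lie on $\{a_\epsilon,b_{(t+1)\epsilon},c_{t\epsilon}\}$ for all $\epsilon$. That single statement is what makes $R_{t+1}$ a well-defined line meeting every ordinary line, and it is also what guarantees Axiom (A\ref{Axiom:A1}) (every two points of the closure lie on one of the designated lines, so your ``$M$-lines meeting the closure in at least two points'' do not include spurious two-point lines) and Axiom (A\ref{Axiom:A5}). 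The paper then needs the labeling bookkeeping of Construction~\ref{C:Rt} and Lemma~\ref{labelformula} ($x_{t,g,h}=x_{t,(1-t)g+h}$) to run the induction and to confirm $R_p=R_0$; none of this is routine, and your proposal gives no mechanism for it beyond asserting that uniqueness of conjugates should make it work.

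Two smaller points. First, your witness for (A\ref{Axiom:A2}) is wrong: in $\{D,a_0,b_0,a_1\}$ the three points $D,a_0,a_1$ all lie on the special line $A$; take instead something like $\{a_0,a_1,b_0,b_1\}$ in a sub-$L_p$. Second, your closing worry --- that conjugation of triples inside $A$ (or $B$, $C$) might create points outside those lines --- is handled in the paper not by a separate argument but by the fact that each $R_\infty\cup R_{t-1}\cup R_t$ is shown to be an isomorphic copy of $L_p^\bV$ (Corollary~\ref{tecnicolabeliso2}), so all conjugations along special lines stay inside the already-constructed point set; you would need to establish that isomorphism, which again rests on the coalescence lemma you deferred.
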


We assume throughout this section that $L_p^\bV$ is embedded in a harmonic matroid $M$.

We often write $\hi(a_g,c_h)$ with only two points instead of $\hi(a_g,b_{g+h},c_h)$, because the point $b_{g+h}$ is predictable.
For convenience in formulas we employ the function  $\phi_{\epsilon}: \bbZ_p \to \bV$, which is an injection defined by  $\phi_{\epsilon}(t)= t\epsilon-tg+h$ where $g, h,$ and $\epsilon$ are fixed elements of  
$\bV$ such that $\epsilon - g \neq 0$.  For the sake of simplicity, and if there is not ambiguity, we write $\phi(t)$ instead of $\phi_{\epsilon}(t)$.

\begin{lem} \label{tecnico1} 
Let $g$ and $h$ be elements of $\bV$.  For every $t \in \{1,\dots,p-1\}$ there is a point $x_{t+1}$ in $M$ such that

\begin{enumerate} [{\rm(i)}]

\item \label{tecnico1Part1} for every $\epsilon  \in \bV$,
$\{a_{\epsilon}, b_{\epsilon+ \phi(t)}, c_{\phi(t)}, x_{t+1} \}$ is collinear;

\item \label{tecnico1Part2} $\hi\{a_g, b_{g+h}, c_h\} = \{a_g, b_{g+h}, c_h, x_2, \dots , x_{p-1}\}$ is a modular sequence of order $p$.  

\end{enumerate}

\end{lem}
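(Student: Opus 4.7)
My plan is to prove (i) and (ii) together by induction on $t$, building the conjugate sequence along $l_{g,h}:=\overline{a_g b_{g+h} c_h}$ by iterated harmonic conjugation and, at each step, exhibiting a bespoke harmonic configuration that forces $x_{t+1}$ to lie on every line $l_\epsilon(t):=\{a_\epsilon, b_{\epsilon+\phi(t)}, c_{\phi(t)}\}$. Set $x_0:=b_{g+h}$, $x_1:=c_h$, and for $t\ge 1$ let $x_{t+1}$ be the harmonic conjugate of $x_{t-1}$ with respect to $a_g$ and $x_t$, which exists in $M$ because $M$ is harmonic. The inductive statement is: for every $t\in\{1,\dots,p-1\}$, the three points $a_g, x_{t-1}, x_t$ are distinct and $x_{t+1}$ lies on $l_\epsilon(t)$ for every $\epsilon\in\bV$. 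The case $\epsilon=g$ is automatic since $l_g(t)=l_{g,h}$; all the work is for $\epsilon\ne g$. The mechanism is uniqueness of the harmonic conjugate: if I can produce any harmonic configuration based on $(a_g, x_{t-1}, x_t)$ whose ``bridge line'' $\overline{a_1 a_3}$ of Figure \ref{F:harmonicgen} equals $l_\epsilon(t)$, then $x_{t+1}$ must lie on $l_\epsilon(t)$.

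For the base case $t=1$ I would take $\HP(a_g, b_{g+h}, c_h;\, a_\epsilon, D, c_{\epsilon-g+h}, b_{\epsilon+h})$. Its five collinearities are immediate: three are on the special lines $A$, $B$, $C$ (with $D$ as the shared point) and two are the ordinary lines $l_{g,\epsilon-g+h}$ and $l_{\epsilon,h}$ of $L_p^\bV$, while the bridge line is $\overline{a_\epsilon c_{\epsilon-g+h}}=l_\epsilon(1)$. For the inductive step $t\ge 2$ I would introduce the dual parameter $\epsilon^*:=(t\epsilon-g)/(t-1)\in\bV$ (where $1/(t-1)\in\bbZ_p$ exists because $t-1\in\{1,\dots,p-2\}$) and use $\HP(a_g, x_{t-1}, x_t;\, a_\epsilon, a_{\epsilon^*}, c_{\phi(t)}, b_{g+\phi(t)})$. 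Two collinearities are on $A$ and $l_{g,\phi(t)}$; the remaining three reduce, by a short $\bbZ_p$-arithmetic check, to the identities $\overline{a_\epsilon b_{g+\phi(t)}}=l_\epsilon(t-1)$, $\overline{a_{\epsilon^*} c_{\phi(t)}}=l_{\epsilon^*}(t-1)$, and $\overline{a_{\epsilon^*} b_{g+\phi(t)}}=l_{\epsilon^*}(t-2)$, so that the inductive hypothesis places $x_t$ on the first two lines and $x_{t-1}$ on the third. The bridge line is $\overline{a_\epsilon c_{\phi(t)}}=l_\epsilon(t)$, as required. The one nontrivial idea is the guess for $\epsilon^*$, and I expect that to be the main obstacle; once it is written down, everything else is routine index arithmetic.

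Part (ii) then follows from a clean line-intersection argument. Specialising (i) to $t=p-1$ gives $\phi(p-1)=g+h-\epsilon$, so $l_\epsilon(p-1)=\{a_\epsilon, b_{g+h}, c_{g+h-\epsilon}\}$ passes through $b_{g+h}$ for every $\epsilon$; since $a_\epsilon\notin l_{g,h}$ whenever $\epsilon\ne g$, the two rank-$2$ flats $l_{g,h}$ and $l_\epsilon(p-1)$ are distinct and meet in the single point $b_{g+h}$, so (i) forces $x_p=b_{g+h}=x_0$. The same technique rules out any earlier coincidence: if $x_{T+1}=x_j$ with $1\le j\le T\le p-1$ and $\epsilon\ne g$, then $l_\epsilon(T)$ and $l_\epsilon(j-1)$ both contain $a_\epsilon$ and the ordinary point $x_{T+1}$, hence coincide, which would force $T-j+1\equiv 0\pmod p$, impossible in this range. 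Therefore the conjugate sequence $a_g, x_0, x_1, \dots$ is modular of order exactly $p$, and Lemma \ref{modmobius} delivers $\hi\{a_g, b_{g+h}, c_h\}=\{a_g, b_{g+h}, c_h, x_2, \dots, x_{p-1}\}$, proving (ii).
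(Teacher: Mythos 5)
Your proof is correct and follows essentially the same route as the paper's: the same induction, with your configuration $\HP(a_g,x_{t-1},x_t;\,a_\epsilon,a_{\epsilon^*},c_{\phi(t)},b_{g+\phi(t)})$ being the paper's $\HP(a_0,x_{t-1},x_t;\,c_{t\epsilon},b_{t\epsilon},a_\epsilon,a_{\epsilon\cdot((t-1)^{-1}+1)})$ up to the symmetry $(a_1,a_2,a_3,a_4)\mapsto(a_3,a_4,a_1,a_2)$ and the paper's normalization $g=h=0$, and the same two-lines-meet-in-one-point argument for $x_p=b_{g+h}$. The only cosmetic differences are that you skip the automorphism reduction and prove the order is exactly $p$ directly instead of citing Proposition \ref{modularsequence}.
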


Recall that $x_p=b_{g+h}$ by the definition of a modular sequence.

\begin{figure}[htbp]
\begin{center}
\includegraphics[scale=.7]{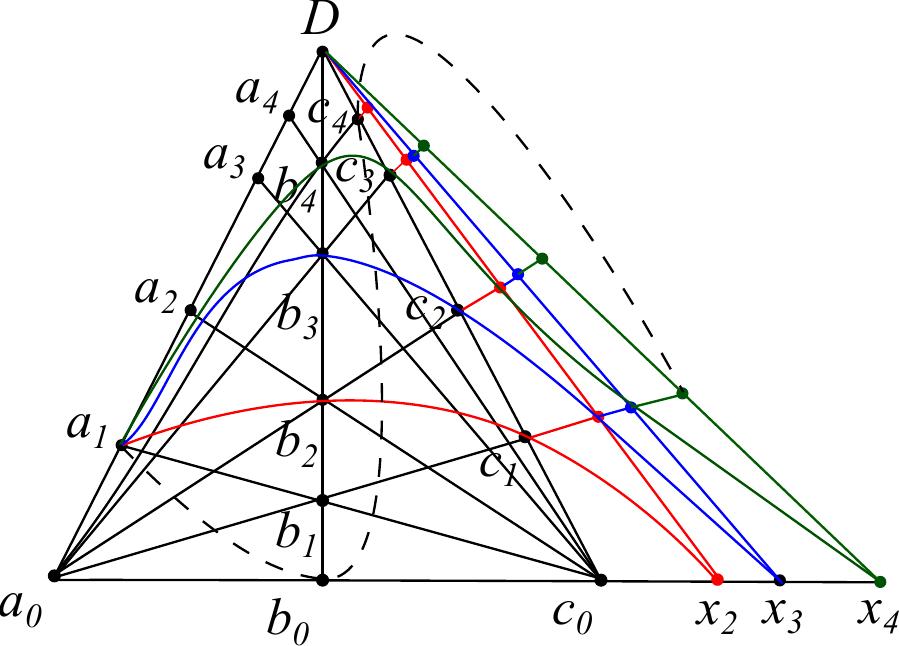}
\caption{The construction of $x_t$'s in Lemma \ref{tecnico1}.  In this example $k=1$, $p=5$, $g=h=0$, and the configuration of the points $D,a_i,b_i,c_i$ is the matroid $L_5$.  The $a_{i\epsilon}, b_{i\epsilon}, c_{i\epsilon}$ in the lemma's proof are simplified to $a_i, b_i, c_i$.}
\label{F:tecnico1}
\end{center}
\end{figure}

We simplify the proof by applying an automorphism.

\begin{lem} For any $\alpha, \beta \in \bV$, the mapping $a_g, b_{g+h}, c_h \mapsto a_{g+\alpha}, b_{g+h+\alpha+\beta}, c_{h+\beta}$ is an automorphism of $L_p^\bV$.\end{lem}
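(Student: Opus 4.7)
The plan is to interpret the stated map as the bijection $\sigma \colon E \to E$ defined by
\[
\sigma(D)=D, \quad \sigma(a_g)=a_{g+\alpha}, \quad \sigma(b_k)=b_{k+\alpha+\beta}, \quad \sigma(c_h)=c_{h+\beta}
\]
for all $g,h,k\in\bV$, and then to verify that $\sigma$ sends the set of rank-$2$ flats of $L_p^\bV$ onto itself. Because $L_p^\bV$ is simple of rank $3$ and defined by prescribing its non-trivial rank-$2$ flats, this is all that is required for $\sigma$ to be a matroid automorphism.

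First I would check that $\sigma$ is a bijection of $E=A\cup B\cup C$. This is immediate: translation by $\alpha$ is a bijection of $\bV$, so $\sigma$ restricts to bijections $A\setminus\{D\}\to A\setminus\{D\}$, $B\setminus\{D\}\to B\setminus\{D\}$, and $C\setminus\{D\}\to C\setminus\{D\}$ (using $\alpha+\beta$ for $B$ and $\beta$ for $C$), and fixes $D$.

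Next I would check preservation of the rank-$2$ flats. The three special lines are preserved setwise by construction: $\sigma(A)=A$, $\sigma(B)=B$, $\sigma(C)=C$. For the ordinary lines, the typical one is $L_{g,h}=\{a_g,b_{g+h},c_h\}$, and
\[
\sigma(L_{g,h})=\{a_{g+\alpha},\,b_{g+h+\alpha+\beta},\,c_{h+\beta}\}.
\]
Setting $g':=g+\alpha$ and $h':=h+\beta$ gives $g'+h'=g+h+\alpha+\beta$, so $\sigma(L_{g,h})=L_{g',h'}$, again an ordinary line. Since $(g,h)\mapsto(g',h')$ is a bijection of $\bV\times\bV$, the family of ordinary lines is permuted bijectively.

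There is really no obstacle here; the only thing one must notice is the compatibility of the $B$-indexing with the additive constraint defining an ordinary line, which is precisely why the shift on $B$ must be $\alpha+\beta$ rather than $\alpha$ or $\beta$. Once the three special lines and the ordinary lines are seen to map onto themselves, $\sigma$ preserves every non-trivial rank-$2$ flat, hence the closure operator, hence the matroid structure, so $\sigma\in\Aut(L_p^\bV)$.
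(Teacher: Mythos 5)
Your proof is correct and follows essentially the same approach as the paper's one-line argument, which simply observes that the line $a_g b_{g+h} c_h$ becomes the line $a_{g+\alpha} b_{(g+\alpha)+(h+\beta)} c_{h+\beta}$; you have merely spelled out the bijectivity and the preservation of the special lines that the paper leaves implicit.
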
  
\begin{proof} A line $a_g b_{g+h} c_h$ becomes a line $a_{g+\alpha} b_{(g+\alpha)+(h+\beta)} c_{h+\beta}$. \end{proof}

Take $\alpha = -g$ and $\beta = -h$; this converts the general case of Lemma \ref{tecnico1} to the case $g=h=0$ and gives $\phi(t) = t+\epsilon$.  (We think of the automorphism as relabelling $L_p^\bV$, not moving points, so there is no need for it to extend to an automorphism of $M$.)  We restate Lemma \ref{tecnico1}:

\begin{enumerate} [{\rm(i)}]
\item \label{tecnico1Part1rev} For every $\epsilon  \in \bV$, $\{a_{\epsilon}, b_{(t+1)\epsilon}, c_{t\epsilon}, x_{t+1} \}$ is collinear;
\item \label{tecnico1Part2rev} $\hi\{a_0, b_{0}, c_0\} = \{a_0, b_{0}, c_0, x_2, \dots , x_{p-1}\}$ is a modular sequence of order $p$.  
\end{enumerate}

Now we are in a copy of $L_p$ with $A_\epsilon=\{a_{i\epsilon}: i\in \bbZ_p\}$, $B_\epsilon=\{b_{i\epsilon}: i\in \bbZ_p\}$, $C_\epsilon=\{c_{i\epsilon}: i\in \bbZ_p\}$.  That reduces the proof to $L_p$.

\begin{proof}[Proof of Lemma \ref{tecnico1}]
It is convenient to define $x_0=b_0$ and $x_1=c_0$.  

First of all, we prove both parts of the lemma for $p=2$.  By the definition of $L_2^k$, $\{a_{0}, b_{0} , c_{0}\}$ and $\{a_{\epsilon}, b_{2\epsilon} , c_{\epsilon} \}$ 
are collinear sets.  Since $2\epsilon=0$ in $\bV$ over $\bbZ_2$, taking $x_2 = b_{0} = x_0$ and $x_3=c_{0}=x_1$ completes the proof for $p=2$.

We now prove the lemma for $p > 2$.  We use mathematical induction on $t \in \bbZ_p$, where here we treat $\bbZ_p$ as a field and let $t = 1,2,\dots,p$ so that $(t-1)\inv$ is meaningful for $t>1$.  To simplify some formulas we define $x_0=b_0$ and $x_1=c_0$.  

Suppose that $t=1$.  From the definition of $L_p^\bV$ we deduce that the configuration 
\begin{equation}\label{formula0}
 \HP(a_0, b_0, c_0; a_{\epsilon}, D, c_{\epsilon}, b_{\epsilon})
\end{equation}
holds in $M$.  This implies that there is a point $x_2 \in M$ with 
$\Hc(a_0, c_0; b_0, x_2)$ such that $\{a_{\epsilon}, b_{2\epsilon}, c_{\epsilon}, x_{2}\}$
and $\{a_{0}, b_0, c_{0}, x_{2}\}$ are collinear sets.  We observe that $x_2$ depends only on $a_0,b_0,c_0$ and is independent of $\epsilon$, because $M$ is a harmonic matroid.  
This proves the proposition for $t=1$.

We now fix $t \in \{2, \dots, p \}$ and suppose that there are points $x_{t-1}$ and $x_{t}$ in $M$ such that
\begin{equation}\label{formula1}
\{a_{\epsilon},b_{(t-1)\epsilon}, c_{(t-2)\epsilon}, x_{{t-1}}\},
\end{equation}
\begin{equation}\label{formula2}
\{a_{\epsilon},b_{t\epsilon} , c_{(t-1)\epsilon}, x_{t}\},  \text{ and }
\end{equation}
\begin{equation}\label{formula3}
\{a_{0}, b_0, c_{0}, x_{2}, \dots, x_{t}\}
\end{equation}
are collinear sets.  
From the definition of the matroid $L_p^\bV$ we know that the sets
$\{a_{0}, c_{t\epsilon}, b_{t\epsilon}\}$ and $\{a_{0}, a_{\epsilon\cdot((t-1)^{-1}+1)}, a_{\epsilon}\}$ are collinear.

Taking $\epsilon\cdot{ (t-1)^{-1}(t)}$ instead of $\epsilon$ in \eqref{formula1} and \eqref{formula2}, we deduce that
$\{a_{\epsilon\cdot{ (t-1)^{-1}(t)} }, b_{t\epsilon}, x_{t-1}\}$ and 
$\{a_{\epsilon\cdot{ (t-1)^{-1}(t)} }, c_{t\epsilon }, x_{t}\}$ are collinear sets.  These two sets are respectively equal to 
$\{a_{\epsilon\cdot((t-1)^{-1}+1)} , b_{t\epsilon}, x_{t-1}\}$ and $\{a_{\epsilon\cdot((t-1)^{-1}+1)} , c_{t\epsilon}, x_{t}\}$.  
These,  \eqref{formula2}, and  \eqref{formula3} imply that
$$
\HP(a_0, x_{t-1},x_{t}; c_{t\epsilon}, b_{t\epsilon}, a_{\epsilon}, a_{\epsilon\cdot((t-1)^{-1}+1)})
$$
holds in $M$.  Therefore, there is a point $x_{t+1} \in M$ with 
$\Hc(a_0, x_{t}; x_{t-1}, x_{t+1})$ such that $\{a_{\epsilon}, c_{t\epsilon}, x_{t+1}\}$, hence $\{a_{\epsilon}, b_{(t+1)\epsilon}, c_{t\epsilon}, x_{t+1}\}$, and  $\{a_0, b_0, c_0, x_2, \dots , x_{t+1}\}$  
are collinear sets.

We have proved $\Hc(a_0,x_{t};x_{t-1},x_{t+1})$ for $t=1, \dots, p$ and that 
$$ \{a_0, b_0, c_0, x_2, \dots , x_{p}\} \subseteq \hi(\{a_0, b_0, c_0\}),$$
hence $\{ a_0, b_0, c_0, x_2, \dots , x_p, x_{p+1} \}$ is a collinear set and $a_0,b_0,c_0,x_2,\dots,x_{p+1}$ is a conjugate sequence.

Since $p\epsilon= 0$, we have $b_{p\epsilon}  = b_0$.  
We deduced at $t=p-1$ that the set $\{a_{\epsilon},b_{p\epsilon} , c_{(p-1)\epsilon}, x_{p}\}$ 
is collinear and lies in a distinct line from $\hi(\{a_0, b_0, c_0\})$.  
Therefore,
$$b_0,x_p \in\{a_{\epsilon},b_{p\epsilon} , c_{(p-1)\epsilon}, x_{p}\} \cap \hi(\{a_0, b_0, c_0\}).$$ 
Since both $b_0$ and $x_p$ are in the intersection of distinct lines, $x_p=b_0$.  
This proves that $a_0, b_0, c_0, x_2, \dots , x_{p-1}$ is a modular sequence of order at most $p$.  By Proposition \ref{modularsequence} Part \eqref{modularsequencem0}, the order divides $p$, which is prime; thus the sequence has order $p$.  
Since by Lemma \ref{modmobius} a modular sequence is harmonically closed,
$\hi\{a_0, b_0, c_0\} \subseteq \{a_0, b_0, c_0, x_2, \dots , x_{p-1}\}$.  

This completes the proof.
\end{proof}

The following construction of a projective rectangle in $M$ is similar to that used by Fl\'orez in \cite{rfhc}, where from an embedding of $L_p$ into a harmonic matroid he produced an embedding of the Desarguesian plane of order $p$.  (His analog of the vector space $\bV$ was simply $\bbZ_p$.)

\begin{construction}[Construction of the Harmonic Closure]\label{C:Rt}
We construct the harmonic closure $\hi(L_p^\bV)$, step by step, by applications of Lemma \ref{tecnico}.  We begin with $L_p^\bV = A \cup B \cup C$ and we define $R_\infty = A$, $R_0=B$, and $R_1=C$.  Given $R_0, \ldots, R_{t}$ for a particular $t=1,2,\dots,p-2$, we define $R_{t+1}$ by harmonic conjugation from $R_\infty, R_{t-1}, R_t$.  The conjugate sequence $\hi(a_0,b_0,c_0) = (a_0,x_0=b_0,x_1=c_0,\ldots,x_{t-1},x_t,x_{t+1},\dots)$ gives us $x_{t+1}$ as the first element of $R_{t+1}$.  

When $t=1$, $R_\infty \cup R_{t-1} \cup R_{t}$ is $L_p^\bV$.  We assume in the induction that each $R_i \setminus D$ for $i\leq t$ is labeled by $\bV$, the labels being $x_{t,h}$, in such a way that $R_\infty \cup R_{t-1} \cup R_{t}$ is isomorphic to $L_p^\bV$ by a mapping that is the identity on $R_\infty$ and maps $b_g, c_h$ respectively to $x_{t-1,g}, x_{t,h}$.  This isomorphism means that the construction of $R_t$ is identical to that of $R_2$, only beginning with a different isomorph of $L_p^\bV$.  Thus, we only need to construct $R_2$.

There were several ways to construct $x_2$, the harmonic conjugate of $b_0$.  We formed a harmonic configuration $\HP(a_0,b_0,c_0;a_g,D,c_h,b_k)$ for some $g, h, k \neq 0$ and observed that the line $\overline{a_gc_h} = \hi(a_g,b_{g+h},c_h)$ intersects $\hi(a_0,b_0,c_0)$ at the harmonic conjugate of $b_0$ with respect to $a_0$ and $c_0$, because we are in the harmonic matroid $M$.  The point is $x_2$.  In fact, $h=k=g$ because of the lines $a_gb_kc_0$ and $a_0b_kc_h$ in $L_p^\bV$, which imply $k=g+0=0+h$.  
Because we are in a harmonic matroid, we get the same harmonic conjugate $x_2$ for every choice of $g$.

Next, we construct the other points in $R_2$.  By Lemma \ref{tecnico} Part \eqref{tecnicoPart3}, the conjugate sequence $\hi(a_0,b_h,c_h)$ intersects $\overline{Dx_2}$ in a point that we call $x_{2,h}$, which is the harmonic conjugate of $b_h$ with respect to $a_0$ and $c_h$.  (This is where we set up the labeling of $R_2$.)  

Now pick any $g' \in \bV$ and let $b_{h+g'}$ be the intersection of the line $\overline{a_{g'}c_h}$ in $L_p^\bV$ with $B$.  The harmonic configuration $\HP(a_0,b_h,c_h; a_{g'},D,c_{h'},b_{h'+h})$ implies that the harmonic conjugate $x_{2,h}$ of $b_h$ with respect to $a_0$ and $c_h$ is on the line $\overline{a_{g'}c_{h}}$; therefore, $\{a_{g'},b_{g'+h'},c_{h'},x_{2,h}\}$ is a collinear set; more precisely $(a_{g'},b_{g'+h'},c_{h'},x_{2,h},\ldots)$ is the modular sequence $\hi(a_{g'},c_{h'})$ and $x_{2,h}$ is the harmonic conjugate of $b_{g'+h'}$ with respect to $a_{g'},c_{h'}$.

We have produced an element of $R_2 \setminus D$ for every $h \in \bV$.  To establish the isomorphism of $A \cup C \cup R_2$ with $L_p^\bV$ we must show that, for arbitrary $g',h',h$, $a_{g'},c_{h'},x_{2,h}$ are collinear if and only if $h' = g'+h$.  Assume collinearity; we need to show that $h'=g'+h$.  Consider the point in $B$ at which $\overline{a_0c_{h'}}$ intersects $\overline{a_{g'}c_h}$.  In the former line it is $b_{0+h'}$ and in the latter it is $b_{g'+h}$; but these are the same point, so $h'=g'+h$.  In the opposite direction, if $h'=g'+h$ then $b_{0+h'} = b_{g'+h}$ so $\overline{a_0c_{h'}}$ intersects $\overline{a_{g'}c_h}$ at $b_{h'}$ and $x_{2,h}$ is the harmonic conjugate of $b_{g'+h'}$ with respect to $a_{g'},c_{h'}$; i.e., it is collinear with $a_{g'},c_{h'}$.  This completes the proof of isomorphism and permits us to advance to the next step of the induction.

Since the modular sequences all have length $p$, it is easy to see that $R_p=R_0$.
\end{construction}

To confirm the validity of the construction we use a labeling lemma.  Define $x_{0,g+h} = b_{g+h}$ and $x_{1,h} = c_h$ and for $t\geq0$ let $x_{t,g,h}$ be the intersection of $\hi(a_g,c_h)$ with $R_t$; thus, $x_{0,g,h} = x_{0,g+h}$ and $x_{1,g,h} = x_{1,h}$.

\begin{lem}\label{labelformula}
For each $t\in \{0,1,\dots, p\}$, we have $x_{t,g,h} = x_{t,(1-t)g+h}$.  
\end{lem}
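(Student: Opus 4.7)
The plan is induction on $t$, exploiting the recursive structure of Construction~\ref{C:Rt}, which builds $R_t$ by applying the $t=2$ case of the labeling formula inside an isomorphic copy of $L_p^\bV$ sitting in $M$. The base cases $t=0$ and $t=1$ hold by definition, since $x_{0,g,h}=x_{0,g+h}=b_{g+h}$ and $x_{1,g,h}=x_{1,h}=c_h$ while $(1-0)g+h=g+h$ and $(1-1)g+h=h$. The case $t=p$ follows from the Construction's observation that $R_p=R_0$, combined with $(1-p)g+h=g+h$ in $\bV$.

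The key observation for the inductive step, already implicit in Construction~\ref{C:Rt}, is this: the $t=2$ case of the lemma reads $x_{2,g,h}=x_{2,h-g}$. Carrying out the same $t=2$ construction inside the isomorphic copy $R_\infty\cup R_{t-2}\cup R_{t-1}$ of $L_p^\bV$ (identifying $b_j\leftrightarrow x_{t-2,j}$ and $c_j\leftrightarrow x_{t-1,j}$) therefore tells us that the modular sequence in $M$ generated by the triple $(a_{g'},x_{t-2,g'+h'},x_{t-1,h'})$ has its fourth point equal to $x_{t,h'-g'}$ under the labeling of $R_t$.

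To apply this, I would restart the conjugate sequence $\hi(a_g,c_h)$ from its $(t-2)$-th and $(t-1)$-th points $y_{t-2}$ and $y_{t-1}$. By the inductive hypothesis these are $x_{t-2,(3-t)g+h}$ and $x_{t-1,(2-t)g+h}$, respectively, so the choice $g'=g$, $h'=(2-t)g+h$ (for which $g'+h'=(3-t)g+h$) makes the iso-copy triple coincide with $(a_g,y_{t-2},y_{t-1})$. Periodicity of modular sequences (Proposition~\ref{modularsequence}\eqref{modularsequencem0}) together with the uniqueness of harmonic conjugates in $M$ ensure that the next point of the restarted sequence is exactly $y_t=x_{t,g,h}$. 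The iso-copy formula then yields $x_{t,g,h}=x_{t,h'-g'}=x_{t,(1-t)g+h}$, completing the induction.

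The main obstacle is bookkeeping rather than any deep fact: one must keep straight the two roles played by each $R_i$ (as a special line in $M$, and as the ``$C$'' of an isomorphic copy of $L_p^\bV$), and confirm that applying the $t=2$ construction to the iso copy produces precisely the labeling of $R_t$ set up in Construction~\ref{C:Rt}. Once the affine shift $(1-t)g+h$ is recognized as the iterate of the translation $h'\mapsto h'-g'$ composed with the shifts dictated by the inductive hypothesis, the algebra is immediate.
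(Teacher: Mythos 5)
Your proposal is correct and follows essentially the same route as the paper: induction on $t$, using the fact (from Construction \ref{C:Rt}) that $R_t$ is built by replaying the $t=2$ case inside the isomorphic copy of $L_p^\bV$ on $R_\infty\cup R_{t-2}\cup R_{t-1}$, so the shift $h'\mapsto h'-g$ composes with the inductively known label $h'=(2-t)g+h$ to give $(1-t)g+h$. Your explicit treatment of the trivial cases $t=0,1,p$ is additional bookkeeping the paper leaves implicit, but the core argument is identical.
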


\begin{proof}
We showed that the point $x_{2,g,h} = x_{2,-g+h}$ where $h = -g'+h'$ and $x_{2,g',h'}$ is the next point after $x_{1,h'}$ in the modular sequence $a_{g},\ldots,x_{1,h'},\dots$.  This means that at each future stage, when the sequence is $a_{g},\ldots,x_{t-1,h'},\dots$, we have $x_{t,g,h} = x_{t,-g+h'}$.  By induction, $h'=(1-[t-1])g+h$, and that establishes the formula.
\end{proof}

Lemma \ref{labelformula} and observations in the construction enable us to specify collinearity of points $x_{i,f}$.

\begin{cor}[Common Modular Sequence]\label{tecnicocollinearcor}
\begin{enumerate}[{\rm(i)}]
\item\label{tecnicoD} For $t, t' \in \{0,1,\dots, p\}$, the points $x_{t,g,h}$ and $x_{t',g',h'}$ are collinear with $D$ if and only if $t|t'$.

\item\label{tecnicoequal}
For each $t\in \{0,1,\dots, p\}$, we have $x_{t,g_1,h_1} = x_{t,g_2,h_2}$ if and only if $(1-t)g_1+h_1 = (1-t)g_2+h_2$.

\item\label{tecnicocollinear} Points $a_g$ and $x_{t,f}$, where $t \geq 0$, are in the unique common modular sequence $\hi(a_g,c_h) = (a_g,b_{g+h},c_h,\dots)$ where $h=(t-1)g+f$.

\item\label{tecnicocollinear:ii} Points $x_{t,f}$ and $x_{t',f'}$, where $t' > t \geq 0$, are in the unique common modular sequence $\hi(a_g,c_h)$ where $g = [f-f']\cdot{(t'-t)\inv}$ and $h=(t-1)g+f = (t'-1)g+f'$.

\item\label{tecnicocollinear:iii} Points $a_g, x_{t,f}, x_{t',f'}$, where $t' > t > 0$, are in a common modular sequence $\hi(a_g,c_h)$ if and only if $f-f' = (t-t')g$.  Then $h=(t-1)g+f = (t'-1)g+f'$.

\item\label{tecnicocollinear:iv} Points $x_{t,f}, x_{t',f'}, x_{t'',f''}$, where $t'' > t' > t \geq 0$, are in a common modular sequence $\hi(a_g,c_h)$ if and only if $[f-f']\cdot{(t'-t)\inv} = [f'-f'']\cdot{(t''-t')\inv}$.  Then $g = [f-f']\cdot{(t'-t)\inv}$ and $h=(t-1)g+f = (t'-1)g+f' = (t''-t)g+f''$.
\end{enumerate}
\end{cor}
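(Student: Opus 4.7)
The plan is to reduce the entire corollary to Lemma \ref{labelformula} together with a little linear algebra in $\bV$ over $\bbZ_p$. Recall that $x_{t,g,h}$ denotes the intersection of the modular sequence $\hi(a_g,c_h)$ with $R_t$, and Lemma \ref{labelformula} gives its label explicitly as $x_{t,g,h}=x_{t,(1-t)g+h}$. Equivalently, a point $x_{t,f}$ lies on $\hi(a_g,c_h)$ if and only if
\[
f = (1-t)g + h,
\]
which is a single linear equation in the two unknowns $g,h\in\bV$. Each part of the corollary translates a common-sequence or collinearity-with-$D$ condition into a small system of such equations.

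Parts \eqref{tecnicoD} and \eqref{tecnicoequal} are essentially immediate from Construction \ref{C:Rt}. The special lines through $D$ are $A$ together with the $R_t$'s, and each $x_{t,f}$ is an ordinary point lying in $R_t$ only, so $x_{t,g,h}$ and $x_{t',g',h'}$ are collinear with $D$ iff $R_t=R_{t'}$, which I read as the content of \eqref{tecnicoD}. For \eqref{tecnicoequal}, the inductive labeling of $R_t\setminus D$ by $\bV$ set up in Construction \ref{C:Rt} is a bijection, so distinct labels mean distinct points; combined with Lemma \ref{labelformula} this gives the stated equivalence.

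For Part \eqref{tecnicocollinear}, the equation $f=(1-t)g+h$ with $g$ prescribed forces $h=(t-1)g+f$, so exactly one modular sequence $\hi(a_g,c_h)$ passes through both $a_g$ and $x_{t,f}$. For Part \eqref{tecnicocollinear:ii}, requiring a single pair $(g,h)$ to place both $x_{t,f}$ and $x_{t',f'}$ on $\hi(a_g,c_h)$ produces the system $f=(1-t)g+h$ and $f'=(1-t')g+h$; subtracting gives $f-f'=(t'-t)g$. Since $t'>t$ with $t,t'\in\{0,\dots,p\}$, the difference $t'-t$ is a nonzero element of the field $\bbZ_p$ (the only degeneracy, the pair $\{0,p\}$, is absorbed by $R_0=R_p$ in Construction \ref{C:Rt}), hence invertible, and the formulas for $g$ and $h$ follow. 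Parts \eqref{tecnicocollinear:iii} and \eqref{tecnicocollinear:iv} are obtained by simultaneously imposing the conditions of \eqref{tecnicocollinear} or \eqref{tecnicocollinear:ii} on the three points in question: the system is consistent exactly when the two slope expressions computed from any two pairs agree, which gives the stated conditions, with $g$ and $h$ read off from the previous parts.

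I do not anticipate a serious obstacle. Once Lemma \ref{labelformula} is in hand, the corollary is bookkeeping of a two-variable linear system over $\bV$ viewed as a $\bbZ_p$-module. The one point deserving care is the degenerate index pair $(0,p)$, where $t'-t\equiv 0\pmod{p}$ would prevent inversion in $\bbZ_p$; this is handled automatically by the identification $R_0=R_p$.
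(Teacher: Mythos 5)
Your proposal is correct and is essentially the paper's own argument: the paper offers no separate proof of this corollary, saying only that it follows from Lemma \ref{labelformula} and observations in Construction \ref{C:Rt}, which is exactly the reduction you carry out, namely that $x_{t,f}$ lies on $\hi(a_g,c_h)$ iff $f=(1-t)g+h$, after which everything is a two-variable linear system over $\bbZ_p$. One small remark: your (correct) subtraction yields $f-f'=(t'-t)g$, which agrees with Part \ref{tecnicocollinear:ii} but exposes an apparent sign typo in the stated condition $f-f'=(t-t')g$ of Part \ref{tecnicocollinear:iii}, and similarly your reading of ``collinear with $D$'' as $R_t=R_{t'}$ (equivalently $t\equiv t'\pmod p$) is the sensible interpretation of the oddly worded condition ``$t|t'$'' in Part \ref{tecnicoD}.
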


\begin{prop}\label{P:Rt}
The set $\cP$ is the harmonic closure of $L_p^\bV$.  The lines in $\cP$ are $R_\infty=A, R_0=B, R_1=C, R_2, \ldots, R_{p-1}$ (with $R_p=R_0$) and the modular sequences $\hi(a_g,b_{g+h},c_h)$ for $g,h \in \bV$.
\end{prop}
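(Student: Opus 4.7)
The plan is to verify both assertions of Proposition \ref{P:Rt}: that the constructed set $\cP = \bigcup_t R_t$ equals $\hi(L_p^\bV)$, and that the lines in $\cP$ are the $p+1$ special lines together with the modular sequences $\hi(a_g, b_{g+h}, c_h)$. I would proceed in three steps.

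First, the inclusion $\cP \subseteq \hi(L_p^\bV)$ is direct from Construction \ref{C:Rt} together with Lemma \ref{tecnico1}: every $x_{t,f} \in R_t$ for $t \geq 2$ is obtained by iterated harmonic conjugation starting from the points of $L_p^\bV = R_\infty \cup R_0 \cup R_1$, and the periodicity $R_p = R_0$ follows because each modular sequence $\hi(a_g,c_h)$ has order $p$. Second, I would identify the line structure of $\cP$ using Corollary \ref{tecnicocollinearcor}: Part (i) shows that two distinct ordinary points of $\cP$ are collinear with $D$ in $M$ if and only if they share the same index $t$, so each $R_t$ is a full line of $\cP$ through $D$ and these are all the lines through $D$ meeting $\cP$ in three or more points; Parts (iii) and (iv) show that three ordinary points of $\cP$ not all sharing one index lie on a common modular sequence $\hi(a_g, c_h)$. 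This yields the enumeration of lines stated in the proposition.

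The remaining and main content is the reverse inclusion $\hi(L_p^\bV) \subseteq \cP$, i.e., closure of $\cP$ under harmonic conjugation in $M$. For three collinear points on a modular sequence, Lemma \ref{modmobius} immediately places the harmonic conjugate back in the sequence. The delicate case is a triple lying on a special line $R_t$: the harmonic conjugate exists in the ambient line of $M$ through $D$, but we must verify that it actually carries a $\bV$-label and so lies in $R_t$. My approach is to restrict to the finite-dimensional subspace $\bV' = \bbZ_p\{f_1, f_2\} \leq \bV$ generated by the labels of the two non-$D$ points, yielding a sub-construction $\hi(L_p^{\bV'}) \subseteq \hi(L_p^\bV)$ in which the harmonic conjugate can be computed within $R_t \cap \hi(L_p^{\bV'}) \subseteq R_t$. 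In the base case $\dim\bV' = 1$ the sub-structure is Fl\'orez's Desarguesian plane of order $p$, where every line has exactly $p+1$ points and harmonic closure on a line is automatic; the case $\dim\bV' = 2$ can be treated by applying Lemma \ref{tecnico1} inside $\hi(L_p^{\bV'})$ to realize the needed harmonic configuration.

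The main obstacle is precisely this last point: ruling out that harmonic conjugation on a special line could escape into unlabeled points of the ambient line of $M$. The subspace reduction, together with the functorial compatibility of Construction \ref{C:Rt} under the inclusion $\bV' \hookrightarrow \bV$, is the critical ingredient that would have to be justified carefully.
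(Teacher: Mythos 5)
Your proposal takes a genuinely different and far more ambitious route than the paper's own proof, which is essentially a two-line summary: the paper treats the proposition as a restatement of Construction \ref{C:Rt} and only verifies that the labels on $R_p$ agree with those on $R_0$ by setting $t=p$ in Lemma \ref{labelformula}. You are right that the substantive content is the reverse inclusion, i.e., that $\cP$ is harmonically closed in $M$, and your first two steps (the forward inclusion, and the line structure via Corollary \ref{tecnicocollinearcor}) are sound. So you have correctly located the crux that the paper's proof does not engage with.

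The gap is in your treatment of triples on a special line $R_t$. First, your reduction to $\bV'=\bbZ_p\{f_1,f_2\}$ ``generated by the labels of the two non-$D$ points'' only covers triples of the form $\{D,x_{t,f_1},x_{t,f_2}\}$; for those the clean argument is not Lemma \ref{tecnico1} but Theorem \ref{thm:harmonicconjugation} applied to the projective rectangle of Theorem \ref{tecnico3}: a configuration realizing the conjugate inside $\cP$ exists there, and uniqueness of harmonic conjugates in the harmonic matroid $M$ forces every configuration in $M$ to return that same point. Second, and fatally, you never treat triples $\{x_{t,f_1},x_{t,f_2},x_{t,f_3}\}$ with no point equal to $D$. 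This is precisely the case the paper itself flags as open after Theorem \ref{thm:harmonicconjugation}, and the subspace reduction does not dispose of it: when $\dim\bV'\geq 2$ the three points need not be coplanar in the rectangle, and the conjugate computed in $M$ is governed by a cross-ratio that uses the multiplicative structure of the ambient coordinatization, not merely the additive labelling by $\bV$. Concretely, embed $L_3^{\bV}$ with $\bV=\bbZ_3+\bbZ_3\omega\subseteq\bbF_{27}$ (where $\omega^3=\omega-1$) into $\pp(27)$ by the standard linear representation $a_g=[1:0:g]$, $c_h=[0:1:h]$, $b_f=[1:1:f]$, $D=[0:0:1]$; then the harmonic conjugate of $a_1$ with respect to $a_0$ and $a_\omega$ is $a_h$ with $h=\omega(2-\omega)^{-1}=2\omega^2+\omega+2\notin\bV$, so the conjugate escapes the labelled set $R_\infty$. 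Hence this case cannot be closed by your method, and any complete argument must either supply a new idea for special-line triples avoiding $D$ or restrict the class of embeddings or of harmonic configurations under consideration.
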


\begin{proof}
This summarizes the result of the inductive construction.  To complete the proof we need to show the point labels are the same in $R_p$ as in $R_0$.  That follows from setting $t=p$ in Lemma \ref{labelformula}.
\end{proof}

We note the following consequence of the construction.

\begin{cor}\label{tecnicomod}
For every $g \in \bV$, $t \in \{0,1,\dots, p-1\}$ and $x_t \in R_t \setminus D$, there exists a modular sequence of the form $\hi(a_g,c_h)$ that contains $x_t$.
\end{cor}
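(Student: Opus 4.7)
The plan is to reduce this corollary to a single algebraic equation in $\bV$, solved using the labeling formula of Lemma \ref{labelformula}.

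First, observe that by Proposition \ref{P:Rt} the point $x_t \in R_t \setminus D$ has a unique label, so we may write $x_t = x_{t,f}$ for some uniquely determined $f \in \bV$. By construction, the modular sequence $\hi(a_g, c_h) = \hi(a_g, b_{g+h}, c_h)$ meets $R_t$ in the single point $x_{t,g,h}$, and Lemma \ref{labelformula} identifies this point as $x_{t,(1-t)g+h}$.

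Next, given $g$ and the target point $x_{t,f}$, choose $h := f - (1-t)g \in \bV$ (equivalently, $h = f + (t-1)g$). With this choice, $(1-t)g + h = f$, so by Corollary \ref{tecnicocollinearcor}\eqref{tecnicoequal} we have $x_{t,g,h} = x_{t,(1-t)g+h} = x_{t,f} = x_t$. Hence the modular sequence $\hi(a_g, c_h)$ contains $x_t$, as required. The two endpoint cases behave as expected: for $t=0$ we recover $h = f - g$ so that $b_{g+h} = b_f = x_t$, and for $t=1$ we recover $h = f$ so that $c_h = c_f = x_t$.

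There is essentially no obstacle here; the corollary is a direct consequence of Lemma \ref{labelformula}, which has already absorbed the inductive work of Construction \ref{C:Rt}. The only thing worth double-checking is that $h$ is well-defined in $\bV$ for all $t \in \{0,1,\dots,p-1\}$, which is immediate since $\bV$ is an abelian group under addition and no inversion in $\bbZ_p$ is required (the coefficient $1-t$ acts by iterated addition/subtraction). Thus the required modular sequence exists for every choice of $g$, $t$, and $x_t$.
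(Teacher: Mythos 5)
Your proof is correct and follows the same route the paper intends: the corollary is exactly the content of Corollary~\ref{tecnicocollinearcor}\eqref{tecnicocollinear}, obtained by solving $(1-t)g+h=f$ for $h$ via Lemma~\ref{labelformula}, which is what you do. Your explicit check of the endpoint cases $t=0,1$ and of well-definedness of $h$ is a harmless bonus; nothing is missing.
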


A final corollary to formalize the basic tool of the construction.

\begin{cor}\label{tecnicolabeliso2} 
Define $\lambda: R_t \to \bV$ by $\lambda(x_{t,g,h}) = (1-t)g+h$ for all $t \in \{0,1,\ldots,p-1\}$.
The submatroids of $M$ formed by $R_\infty \cup R_{t-1} \cup R_{t}$ and, for each $0 \le t'' < t' < t \le p-1$ by $R_{t''} \cup R_{t'} \cup R_{t}$ are isomorphic to $L_p^\bV$ by the labeling $\lambda$ with $R_\infty \mapsto A$, $R_{t-1} \mapsto B$, and $R_{t} \mapsto C$. 
\end{cor}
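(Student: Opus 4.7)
My plan is to handle the two cases of the statement separately, in both cases extracting the short-line collinearity rule from Corollary \ref{tecnicocollinearcor} and matching it to the defining relation $\{a_g, b_{g+h}, c_h\}$ of $L_p^\bV$. Under $\lambda$ the three ``arms'' $R_\infty, R_{t-1}, R_t$ (respectively $R_{t''}, R_{t'}, R_t$) are already in bijection with $A, B, C$, so the three long lines through $D$ correspond automatically; the only content is that the short lines correspond correctly.

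For the submatroid $R_\infty \cup R_{t-1} \cup R_t$, I would read the isomorphism straight off Construction \ref{C:Rt}, which defined the labels on each $R_t$ precisely so that the identification $a_g \mapsto a_g$, $x_{t-1,f} \mapsto b_f$, $x_{t,f} \mapsto c_f$ is a matroid isomorphism with $L_p^\bV$. To make this verifiable independently, I would invoke Corollary \ref{tecnicocollinearcor}\eqref{tecnicocollinear}: for a triple $\{a_g, x_{t-1,f_1}, x_{t,f_2}\}$ to lie in a common modular sequence $\hi(a_g, c_h)$, the formulas $f_1 = (2-t)g + h$ and $f_2 = (1-t)g + h$ force $f_1 - f_2 = g$, which is exactly the $L_p^\bV$ short-line relation $f_1 = g + f_2$ with $h = f_2$.

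For the submatroid $R_{t''} \cup R_{t'} \cup R_t$ with $0 \le t'' < t' < t \le p-1$, the key tool will be Corollary \ref{tecnicocollinearcor}\eqref{tecnicocollinear:iv}: the triple $\{x_{t'',f_0}, x_{t',f_1}, x_{t,f_2}\}$ is a short line iff $(t-t')(f_0-f_1) = (t'-t'')(f_1-f_2)$, which I would rewrite as the affine relation
\[
\phi_0 f_0 - \phi_1 f_1 + \phi_2 f_2 = 0,
\]
where $\phi_0 := t-t'$, $\phi_1 := t-t''$, $\phi_2 := t'-t''$ are nonzero elements of $\bbZ_p$ satisfying $\phi_1 = \phi_0 + \phi_2$. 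The isomorphism I would then define is $D \mapsto D$ together with $x_{t'',f_0} \mapsto a_{\phi_0 f_0}$, $x_{t',f_1} \mapsto b_{\phi_1 f_1}$, $x_{t,f_2} \mapsto c_{\phi_2 f_2}$, i.e., $\lambda$ postcomposed with the arm-wise scalings $\phi_0, \phi_1, \phi_2$. Since each scaling is an automorphism of $\bV$, each arm lands bijectively on the corresponding special line of $L_p^\bV$, and the short-line condition $\phi_1 f_1 = \phi_0 f_0 + \phi_2 f_2$ becomes exactly the defining relation of a line of $L_p^\bV$.

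The main obstacle I anticipate is interpreting ``by the labeling $\lambda$'' flexibly in the second case: the isomorphism is not literally $\lambda$ but $\lambda$ composed with multiplication by the nonzero scalars $\phi_0, \phi_1, \phi_2$ on the three arms. The first case is the degenerate instance in which no rescaling is needed (owing to the way the inductive step of Construction \ref{C:Rt} was set up). Once the rescaling is identified from Corollary \ref{tecnicocollinearcor}\eqref{tecnicocollinear:iv}, the two cases become uniform: each amounts to observing that any affine collinearity $\alpha u - (\alpha+\beta) v + \beta w = 0$ with $\alpha, \beta \in \bbZ_p^\times$ is equivalent, after a diagonal rescaling of coordinates, to the standard relation $u - v + w = 0$ that defines $L_p^\bV$.
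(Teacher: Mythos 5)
Your proposal is correct, and it supplies a verification that the paper itself omits: the paper states Corollary \ref{tecnicolabeliso2} with no proof, presenting it as a formal restatement of Construction \ref{C:Rt} (which only establishes the $R_\infty \cup R_{t-1} \cup R_t$ case inductively) together with Corollary \ref{tecnicocollinearcor}. Your treatment of the first case matches the paper's intent exactly. For the second case your use of Corollary \ref{tecnicocollinearcor}\eqref{tecnicocollinear:iv} is the right tool, and your key observation is a genuine improvement on the paper's wording: the collinearity rule $(t-t')(f_0-f_1)=(t'-t'')(f_1-f_2)$ is \emph{not} the standard relation $f_1=f_0+f_2$ of $L_p^\bV$ under the raw labels, so the isomorphism is $\lambda$ composed with the arm-wise nonzero scalings $t-t'$, $t-t''$, $t'-t''$ (nonzero in $\bbZ_p$ because $0\le t''<t'<t\le p-1$), not $\lambda$ literally as the corollary's phrasing suggests. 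One small point worth recording: part \eqref{tecnicocollinear:iii} of Corollary \ref{tecnicocollinearcor} carries a sign slip (its condition $f-f'=(t-t')g$ is inconsistent with its own clause $h=(t-1)g+f=(t'-1)g+f'$, which forces $f-f'=(t'-t)g$); you implicitly work with the internally consistent version coming from parts \eqref{tecnicocollinear} and \eqref{tecnicocollinear:ii}, which is the correct one. A fully self-contained write-up should also note, via part \eqref{tecnicocollinear:ii}, that every cross-arm pair of points lies in a unique modular sequence meeting each $R_s$ in one point, so the short lines of the restricted submatroid are precisely the three-point traces you analyze; you leave this implicit but it is immediate.
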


\begin{lem}\label{tecnico2} Let $g, h, q$ and $r$ be elements of
$\bV$.  The set $l_1 = \hi(a_g,c_h)$ is a line
of a Desarguesian projective plane of order $p$ in $\hi(L_p^\bV)$.  Moreover,
if $l_1$ intersects the line $l_2 = \hi(a_q,c_r)$ in a single point,
then both are lines of a Desarguesian projective plane of order $p$ in $\hi(L_p^\bV)$.
\end{lem}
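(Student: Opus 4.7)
For the first assertion, I invoke Proposition~\ref{modularsequence}. For any fixed $\epsilon \in \bV \setminus \{0\}$, the seven-point set
\[ T_\epsilon := \HP(a_g, b_{g+h}, c_h;\ a_{g+\epsilon},\ D,\ c_{h+\epsilon},\ b_{g+h+\epsilon}) \]
is a harmonic configuration in $M$: its five required collinearities are immediate from the line rule of $L_p^\bV$, with three of them on the special lines $A, B, C$ and two on ordinary lines of $L_p^\bV$. By Lemma~\ref{tecnico1}, $\hi(a_g, b_{g+h}, c_h)$ is a modular sequence of prime order $p$, so Proposition~\ref{modularsequence}, Part~\eqref{modularsequence:i} produces a Desarguesian plane $\pi := \hi(T_\epsilon)$ of order $p$, and Part~\eqref{modularsequence:5} places $l_1$ as a line of $\pi$.

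For the second assertion, the plan is to exploit the free parameter $\epsilon$ so that the same plane $\pi$ also contains $l_2$. By Corollary~\ref{tecnicocollinearcor}, the hypothesis that $l_1 \cap l_2$ is a single point forces $q-g$ and $r-h$ to lie in a common $1$-dimensional $\bbZ_p$-subspace of $\bV$. I therefore choose $\epsilon := q-g$ when $q \neq g$, and $\epsilon := r-h$ otherwise; in both cases $\epsilon \neq 0$ (else $l_1 = l_2$), and one may write $q = g + s\epsilon$ and $r = h + s'\epsilon$ for suitable $s, s' \in \bbZ_p$. Build $T_\epsilon$ and $\pi = \hi(T_\epsilon)$ as in the first part; it now suffices to show that $a_q, c_r \in \pi$, because then $l_2$, being the unique line of $\hi(L_p^\bV)$ through these two points (Proposition~\ref{P:Rt}), is a line of $\pi$.

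The main obstacle is to identify the line $\pi \cap A$ of $\pi$, which has $p+1$ points and contains $a_g, a_{g+\epsilon}, D$, explicitly as $\{a_{g+t\epsilon} : t \in \bbZ_p\} \cup \{D\}$, and analogously for $\pi \cap C$. My plan to resolve this is to build an auxiliary harmonic configuration inside $L_p^\bV$, such as
\[ \HP(a_g, a_{g+\epsilon}, D;\ c_k, b_{g+k}, b_{g+k-\epsilon}, c_{k-\epsilon}) \]
for any $k \in \bV \setminus \{0, \epsilon\}$, whose five collinearities again reduce to the line rule of $L_p^\bV$. Tracing where the line $\overline{c_k\, b_{g+k-\epsilon}}$ meets $A$ yields $\Hc(a_g, D;\ a_{g+\epsilon}, a_{g-\epsilon})$; shifting this construction inductively along the base variable places every $a_{g+t\epsilon}$ with $t \in \bbZ_p$ in $\hi(a_g, a_{g+\epsilon}, D) \subseteq \pi \cap A$, and the size count $|\pi \cap A| = p+1$ then forces equality. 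Hence $a_q = a_{g+s\epsilon} \in \pi$, and a symmetric argument on $C$ gives $c_r \in \pi$, completing the proof.
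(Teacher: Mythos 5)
Your overall strategy is genuinely different from the paper's in the main case, and it is viable. The paper, after disposing of the cases where $l_1\cap l_2$ lies on $A$ or $C$, locates the intersection point as some $x_{j+1}$ and builds a single harmonic configuration (e.g.\ $\HP(a_g,x_j,x_{j+1};c_{\phi_q(j)},b_{g+\phi_q(j)},a_\Delta,a_q)$) that contains \emph{three collinear points of each} of $l_1$ and $l_2$; Proposition \ref{modularsequence}, Parts \eqref{modularsequence:i} and \eqref{modularsequence:ii}, then immediately exhibits both lines as lines of $\hi(T)$. You instead fix the plane $\pi=\hi(T_\epsilon)$ through $l_1$ in the ``direction'' $\epsilon=q-g$ and argue that $\pi$ swallows the generators of $l_2$. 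Your reduction of the intersection hypothesis to $q-g,\,r-h\in\bbZ_p\epsilon$ is correct (it follows from Lemma \ref{labelformula} and Corollary \ref{tecnicocollinearcor}), your configuration $T_\epsilon$ and the auxiliary configuration $\HP(a_g,a_{g+\epsilon},D;c_k,b_{g+k},b_{g+k-\epsilon},c_{k-\epsilon})$ are legitimate harmonic configurations in $L_p^\bV$, and the identification $\pi\cap A=\{a_{g+t\epsilon}:t\in\bbZ_p\}\cup\{D\}$ by the counting argument is sound. (Note that in the subcase $q=g$ your choice $\epsilon=r-h$ reproduces exactly the paper's configuration $\HP(a_g,b_{g+h},c_h;a_{g+r-h},D,c_r,b_{g+r})$.)

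There is, however, a genuine gap at the very last step: from $a_q,c_r\in\pi$ you conclude that ``$l_2$, being the unique line of $\hi(L_p^\bV)$ through these two points, is a line of $\pi$.'' What you actually get is that $\pi$ has a unique line $\lambda$ through $a_q$ and $c_r$, and that $\lambda$ is a $(p+1)$-point collinear subset of the rank-$2$ flat $\cl(\{a_q,c_r\})$ of $M$. Proposition \ref{P:Rt} tells you $l_2=\hi(a_q,b_{q+r},c_r)$ is the unique line \emph{of the listed family} through these points, but it does not by itself rule out that $\lambda$ is a different $(p+1)$-subset of $\cl(\{a_q,c_r\})$; collinearity in $M$ is weaker than membership in a common modular sequence, and the statement that every collinear subset of $\cP$ lies in a listed line is essentially Axiom (A\ref{Axiom:A1}), which is only proved later (Theorem \ref{tecnico3}). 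The step is true and can be repaired: the $B$-line $\{b_{g+h},b_{g+h+\epsilon},D\}$ of $T_\epsilon$ gives, by the same argument as for $A$, that $b_{q+r}=b_{(g+h)+(s+s')\epsilon}\in\pi$, whence $a_q,b_{q+r},c_r$ all lie on $\lambda$ (two distinct lines of $\pi$ cannot both sit inside the rank-$2$ flat $\cl(\{a_q,c_r\})$); then $\lambda$, being a line of the Desarguesian plane $\pi$ of prime order inside the harmonic matroid $M$, is harmonically closed (Corollary \ref{conjugatesequence-p} together with Lemma \ref{modmobius} and uniqueness of harmonic conjugates in $M$), so $l_2=\hi(a_q,b_{q+r},c_r)\subseteq\lambda$ and equality follows by cardinality. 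Without some such argument the proof is incomplete; the paper's choice of a configuration meeting $l_2$ in three points is precisely what lets it sidestep this issue.
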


\begin{proof}
First we prove  the lemma for $p=2$.  Since  $\hi(L_2^\bV)=L_2^\bV$ in this case, we have 
$l_1= \{ a_g, b_{g+h}, c_h\}$  and $l_2=\{a_q, b_{q+r}, c_r\}$. 
So, the intersection of $l_1$ and $l_2$ is either $\{a_g\}=\{a_q \}$, $\{b_{g+h}\}=\{b_{q+r} \}$  or
$\{c_h\}=\{c_r \}$. 

Suppose that $a_g = a_q $.  Since $g+r+2h=g+r$ and $g+2r+h=g+h$
in $\bV$, both sets $\{a_{g+r+h}, b_{g+h}, c_r\}$ and $\{a_{g+r+h}, b_{g+r}, c_h \}$ are collinear.  
Since these are lines that intersect only in $a_g$, we conclude that $r \ne h$.
The collinearities imply that the configuration 
$\HP(a_g, b_{g+h}, c_h; a_{g+r+h}, D, c_r, b_{g+r})$ holds in $L_2^\bV$ and is a Fano plane.
That proves there is a projective plane in $\hi(L_p^\bV)$ that
contains both $l_1$ and $l_2$.  The proof is similar if $c_h=c_r$.

Suppose that $b_{g+h} = b_{q+r}$.  This implies that $g+h =
q+r$.  Since $-h=h$ and $-r=r$, it follows that $b_{g+r}= b_{q+h}$ in $\bV$.
Therefore, $\HP(a_g, b_{g+h}, c_h; a_q, D, c_r, b_{g+r})$ is a
Fano plane in $L_2^\bV$ that contains both $l_1$ and $l_2$.

We now prove the lemma for $p>2$.  If the intersection of $l_1$ and $l_2$ is $\{a_g\}=\{a_q\}$, 
then, clearly, $\HP(a_g, b_{g+h}, c_h; a_{g+r-h}, D, c_r, b_{g+r})$ holds in
$L_p^\bV$.  This, Lemma \ref{tecnico1} Part \eqref{tecnico1Part2}, and 
Proposition \ref{modularsequence} imply that
$\hi(\HP(a_g, b_{g+h}, c_h; a_{g+r-h}, D, c_r, b_{g+r}))$ is a projective plane that contains both $l_1$ and
$l_2$.  If the intersection of $l_1$ and $l_2$ is $\{c_h\}=\{c_r \}$,
the proof is similar.  Therefore, we may now assume $q \ne g$ and $r \ne h$.

Since $l_1 =  \hi \{a_g, b_{g+h}, c_h\}$, Lemma
\ref{tecnico1} guarantees the existence of points $x_2, \dots ,
x_{p}$ in $M$, where $x_{p}=b_{g+h}$, such that the line $l_1=\{a_g, c_h, x_2, \dots , x_{p-1}\}$ is a modular sequence of order $p$.  
This and Proposition \ref{modularsequence} imply the first part of the lemma.

Suppose that the intersection  of $l_1$ and $l_2$ is $\{x_{j+1}\}$
for some $ j \in \{1, \dots, p-1 \} $.  This implies that
$\{a_q, c_r, x_{j+1}\}$ is collinear.  Lemma
\ref{tecnico1} with $\epsilon = q$ implies that $\{a_{q},
c_{\phi_{q}(j)}, x_{j+1} \}$ is collinear.  Thus, $\{a_{q},
c_{\phi_{q}(j)}, c_r, x_{j+1} \}$ is collinear.  The intersection of the two collinear sets
$\{a_{q}, c_{\phi_{q}(j)}, c_r, x_{j+1} \}$ and $C$ 
cannot have more than one point, so we know that $c_{\phi_{q}(j)}= c_r$.  Thus, $l_2$ is
equal to $\hi \{a_{q}, c_{\phi_{q}(j)}, x_{j+1} \}$.

Suppose $j \ne p-1$ and let $\Delta={g+(q-g)\cdot{j(j+1)^{-1}}}$.  
From the definition of  $L_p^\bV$ we know that 
$\{a_{g}, c_{\phi_{q}(j)}, b_{g\phi_{q}(j)} \}$ and 
$\{a_{g}, a_{q}, a_{\Delta} \}$ are collinear sets.  
Lemma \ref{tecnico1} Part \eqref{tecnico1Part1} implies both that 
$\{ a_{\Delta}, b_{g\phi_{q}(j)} , x_{j+1} \}$ is collinear, by taking $\epsilon = \Delta$ and computing $\Delta \phi_\Delta(j) = g+\phi_q(j)$, 
and that $\{ a_{q}, b_{q+\phi_{q}(j-1)}, x_j \}$
and $\{ a_{q},c_{\phi_{q}(j)} , x_{j+1} \}$ are collinear sets, 
by taking $\epsilon = q$.  These, the equality $q+\phi_{q}(j-1) = g+\phi_{q}(j)$, 
and the fact that $\{ a_g, x_j, x_{j+1} \}$ is collinear, imply that
$$\HP(a_g, x_{j},x_{j+1}; c_{\phi_{q}(j)}, b_{g\phi_{q}(j)}, a_{\Delta}, a_{q})$$ 
holds in $M$.  This and Proposition \ref{modularsequence} imply that
$$\hi ( \HP(a_g, x_{j},x_{j+1}; c_{\phi_{q}(j)}, b_{g+\phi_{q}(j)}, a_{\Delta}, a_{q}) )$$ 
is a projective plane that contains both $l_1$ and $l_2$.

If $j=p-1$ we have $\HP(D, a_1, a_g; b_{g+r}, b_{g+h}, c_h, c_r)$.  In this case $$\hi ( \HP(D, a_1, a_g; b_{g+r}, b_{g+h}, c_h, c_r) )$$ 
is a projective plane that contains both $l_1$ and $l_2$.
\end{proof}

\begin{prop}\label{mainprop} Let $l_1$ and $l_2$ be two distinct lines in $\hi(L_p^\bV)$
with $l_1 \cap l_2 \not= \emptyset$ and neither of them incident with $D$.  If two other distinct lines in  
$\hi(L_p^\bV)$ intersect both $l_1$ and $l_2$ in four distinct points, then those two lines intersect.
\end{prop}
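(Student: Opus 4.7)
The plan is to reduce the proposition to the geometry of a single projective plane and invoke the fact that two distinct lines of a projective plane always meet.

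First I would dispose of the cases where $m_1$ or $m_2$ is a special line. If both are special, they share the point $D$. If exactly one is special, say $m_1 = R_t$, then the other is ordinary and, by Proposition \ref{P:Rt}, is a modular sequence $\hi(a_g,c_h)$ containing exactly one point of each $R_s$; in particular it meets $R_t$. So I may assume both $m_1$ and $m_2$ are ordinary lines of $\hi(L_p^\bV)$.

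Since $l_1$ and $l_2$ are distinct intersecting ordinary lines, Lemma \ref{tecnico2} places them both as lines of a common Desarguesian projective plane $\pi$ of order $p$ in $\hi(L_p^\bV)$. Writing $p_1 = m_1 \cap l_1$, $p_2 = m_1 \cap l_2$, $p_3 = m_2 \cap l_1$, $p_4 = m_2 \cap l_2$, the four distinct points $p_1,\dots,p_4$ all lie in $\pi$. Within the projective plane $\pi$ there are unique lines $n_1$ through $\{p_1,p_2\}$ and $n_2$ through $\{p_3,p_4\}$; these are distinct, for if $n_1 = n_2$ then $p_1, p_3 \in n_1 \cap l_1$ would force $l_1 = n_1$, and similarly $l_2 = n_1$, contradicting $l_1 \ne l_2$. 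Hence $n_1$ and $n_2$ meet at a unique point $q \in \pi$.

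It remains to identify $n_1$ with $m_1$ and $n_2$ with $m_2$ as lines of $\hi(L_p^\bV)$, so that $q \in m_1 \cap m_2$. By the sense in which Proposition \ref{modularsequence} Part \eqref{modularsequence:i} declares $\pi$ a Desarguesian projective plane in $M$, each line of $\pi$ is a set of points of $M$ that are collinear in the matroid sense. Hence $n_1$ is a set of $p+1$ matroid-collinear points of $\hi(L_p^\bV)$ containing $p_1$ and $p_2$, so $n_1$ is contained in the unique matroid line $m_1$ through those two points. By Proposition \ref{P:Rt} together with Lemma \ref{tecnico1} Part \eqref{tecnico1Part2}, the ordinary line $m_1$ is itself a modular sequence of order $p$ with exactly $p+1$ points; therefore $n_1 = m_1$. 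The same argument gives $n_2 = m_2$.

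The step I expect to require the most care is the identification of a line of $\pi$ with a matroid line of $M$, i.e., that the unique line of $\pi$ through two of its points coincides with the intersection of $\pi$ with the corresponding rank-2 flat of $M$. This is implicit in the phrase ``Desarguesian projective plane of order $p$ in $M$'' from Proposition \ref{modularsequence}, but it is the one place where the argument uses more than the bare statement of Lemma \ref{tecnico2}.
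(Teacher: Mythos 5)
Your proposal follows the paper's own argument: Lemma \ref{tecnico2} places $l_1$ and $l_2$ in a common projective plane $\pi$, the four intersection points lie in $\pi$, and two distinct lines of $\pi$ must meet. The extra work you do---handling transversals that are special lines and identifying the lines of $\pi$ through the point pairs with the matroid lines $m_1,m_2$---only fills in details that the paper's shorter proof leaves implicit, so the approach is essentially the same.
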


\begin{proof} Suppose that $l_3$ and $l_4$ are two distinct lines in
$\hi(L_p^\bV)$, such that $l_3$ intersects $l_1$ and $l_2$ in
$\{y_1\}$ and $\{y_2\}$, respectively, and  $l_4$ intersects
$l_1$ and $l_2$ in $\{y_3\}$ and  $\{y_4\}$ respectively.

From Lemma \ref{tecnico2} we know that $l_1$ and $l_2$ lie in a
projective plane $\pi \subseteq \hi(L_p^\bV)$.  Since $y_1, y_2,
y_3$ and $ y_4$ are in $\pi$, the lines $l_3\cap \pi$ and $l_4\cap
\pi$ intersect in $\pi$.  This proves that $l_3 \cap l_4 \not = \emptyset$.
\end{proof}

\begin{construction}[Projective Rectangle by Harmonic Conjugation] \label{HarmonicConjPR}
Let $\PR:=(\cP,\mathcal{L},\mathcal{I})$, where 	
\begin{align*}
\cP \   &:={ \bigcup_{g,h \in \bV} \hi(a_g,c_h)} \cup \{D\} = R_{\infty} \cup {\bigcup_{i=0}^{p-1} R_i},\\
\cL  \ &:= \{R_i : i= \infty, 0, \dots, p-1\} \cup \{\hi(a_g,c_h) :  g,h \in \bV\},
\end{align*}
and $\mathcal{I}$ is the incidence relation of set membership. 
\end{construction}

\begin{thm} \label{tecnico3} The system $\PR$  is a projective rectangle of order $(p,|\bV|)$.
\end{thm}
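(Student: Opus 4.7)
The plan is to verify the six axioms (A\ref{Axiom:A1})--(A\ref{Axiom:A6}) in turn, using the explicit structure of $\PR$ established in Construction \ref{HarmonicConjPR} together with Proposition \ref{P:Rt}, Corollary \ref{tecnicocollinearcor}, Lemma \ref{tecnico1}, and Proposition \ref{mainprop}. The heavy lifting of this section --- the existence of the modular sequences, the explicit $\bV$-labeling of the $R_i$, and the Pasch-type statement that any two intersecting ordinary lines lie in a common Desarguesian subplane --- has already been carried out in Lemma \ref{tecnico1}, Lemma \ref{tecnico2}, and Proposition \ref{mainprop}, so the theorem itself reduces to organized bookkeeping.

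First I would record the special/ordinary classification needed for (A\ref{Axiom:A4}): by construction each $R_i$ contains $D$, so all $p+1$ of them are special, while for every $g,h \in \bV$ the embedding preserves rank, giving $\rk_M(\{D,a_g,c_h\}) = 3$; hence $D \notin \cl_M(\{a_g,c_h\}) \supseteq \hi(a_g,c_h)$, so no modular sequence contains $D$. I would then verify (A\ref{Axiom:A1}) by cases on an arbitrary pair of distinct points. If one is $D$, the unique $R_i$ through the other point is the only line, since ordinary lines avoid $D$. If both points lie in the same $R_i$, no ordinary line works, because the modular sequence $\hi(a_g,c_h) = (a_g,b_{g+h},c_h,x_2,\dots,x_{p-1})$ meets each $R_j$ in exactly one point. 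If the two points lie in distinct $R_i,R_j$ with neither being $D$, no special line contains both, and Corollary \ref{tecnicocollinearcor} Parts \eqref{tecnicocollinear} and \eqref{tecnicocollinear:ii} produce the unique modular sequence through the pair. Axiom (A\ref{Axiom:A2}) I would handle by exhibiting $\{a_0,a_1,b_0,b_1\}$: any line containing three of these would have to be $R_\infty$ or $R_0$ (each of which contains only two) or ordinary (meeting each $R_j$ in at most one point), a contradiction. Axiom (A\ref{Axiom:A3}) is a count: special lines have $|\bV|+1 \geq 3$ points and ordinary lines have $p+1 \geq 3$ points by Lemma \ref{tecnico1}.

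Axiom (A\ref{Axiom:A5}) splits similarly: two distinct $R_i, R_j$ meet only in $\{D\}$ by the partition structure of $\cP$, while $R_i \cap \hi(a_g,c_h) = \{x_{i,g,h}\}$ is a single point by construction. Axiom (A\ref{Axiom:A6}) is verbatim the content of Proposition \ref{mainprop}. Finally, there are $p+1$ special lines, each of cardinality $|\bV|+1$, giving order $(p,|\bV|)$ as claimed. The only step that demands any genuine thought is (A\ref{Axiom:A6}) --- the existence of a common Desarguesian subplane containing two intersecting ordinary lines --- but this work was performed in Lemma \ref{tecnico2} and wrapped into Proposition \ref{mainprop}, so here it suffices to invoke the appropriate earlier result at each axiom.
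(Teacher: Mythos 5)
Your proposal is correct and follows essentially the same route as the paper: an axiom-by-axiom verification that delegates (A\ref{Axiom:A1}) and (A\ref{Axiom:A5}) to Lemma \ref{tecnico1} and Corollary \ref{tecnicocollinearcor}, and (A\ref{Axiom:A6}) to Proposition \ref{mainprop}. If anything you are slightly more explicit than the paper on the uniqueness half of (A\ref{Axiom:A1}) and on exhibiting a concrete four-point set for (A\ref{Axiom:A2}), which the paper treats tersely.
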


\begin{proof} 
Clearly, $D$ serves as the special point and $R_i$ is a special line for each $i \in \{1,2, \dots, p-1,\infty\}$. Any point apart from $D$ belongs to some line $R_i$, hence it is collinear with $D$. This establishes that $\PR$ satisfies Axioms (A\ref{Axiom:A2}) and (A\ref{Axiom:A4}).

The construction of the sets of the form  $\hi(a_g,c_h)$ for $g,h \in \bV$ and the definition of the sets 
$R_{\infty}, R_0, \dots, R_{p-1}$ in Construction \ref{HarmonicConjPR} imply that every set in $\cL$ has at least $p+1 > 3$ elements, thus we seamlessly achieve the satisfaction of Axiom (A\ref{Axiom:A3}).

From Lemma \ref{tecnico1} we know that for fixed $g^{\prime},h^{\prime} \in \bV$ the set  $\hi(a_{g^{\prime}},c_{h^{\prime}})$ intersects $R_i$ in exactly one point for $i \in \{0, 1, \dots, p-1, \infty \}$.  
This and the fact that $R_i\cap R_{j}=\{D\}$ for $i \ne j$ prove that $\PR$ satisfies Axiom (A\ref{Axiom:A5}).  

Let $w$ and $q$ be two points in $\PR$, neither equal to $D$.  If both points are in $R_i$ for some $i$, they are collinear.  Now suppose that $w \in R_s$ and $q\in R_t$, for $s\ne t$.
Again, from the definition of $L_p^\bV$ and Lemma \ref{tecnico1} Part \eqref{tecnico1Part1}, there are
$|\bV|$ sets of the  form $\hi(a_g,c_h)$ for $g,h \in \bV$ that contain $w$.  
By Corollary \ref{tecnicocollinearcor} Part \ref{tecnicocollinear}  the set $\cP \setminus R_s$ is contained in the union of all $|\bV|$ sets of the  form $\hi(a_g,b_{g+h},c_h)$ that contain $w$.  
Since $q \in R_t \ne R_s$,  $q$ and $w$ must be collinear.  
This proves Axiom (A\ref{Axiom:A1}).

Let $l_1$ and $l_2$ be two distinct sets in $\cL$, where $l_1 \cap l_2 \neq \emptyset$ and neither of them is incident with $D$. Suppose that $l_3$ and $l_4$ are two distinct sets in $\cL$ such that $l_3$ intersects $l_1$ and $l_2$ at two distinct points, and $l_4$ intersects $l_1$ and $l_2$ at two other distinct points. According to Proposition \ref{mainprop}, they have non-empty intersection. 

From Construction \ref{C:Rt} we know that $R_i \cap \hi(a_g,c_h) \ne \emptyset$ for $g,h \in \bV$ and $i \in \{0,1,2, \dots, p-1, \infty\}$. 
This proves that $\PR$ satisfies Axiom (A\ref{Axiom:A6}) and thus completes the proof. 
\end{proof}

\begin{proof}[Proof of Theorem \ref{ThmHarmonicPR}]
The proof is a straightforward application of Construction \ref{HarmonicConjPR}, Theorem \ref{tecnico3}, and Proposition \ref{P:Rt}.
\end{proof}

\begin{cor} \label{CoroHarmonicPP}
For each embedding of $L_p^\bV$ in a harmonic matroid and each embedding of $L_p$ into $L_p^\bV$, $\hi(L_p)$ is a Desarguesian projective plane of order $p$ embedded in $\hi(L_p^\bV)$.
\end{cor}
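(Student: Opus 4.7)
The plan is to reduce the statement to Theorem \ref{ThmHarmonicPR} by specializing $\bV$ to $\bbZ_p$. Since $L_p^{\bbZ_p}$ is literally the matroid $L_p$ (the one-dimensional case of the definition of $L_p^\bV$), composing the two given embeddings yields an embedding $L_p = L_p^{\bbZ_p} \hookrightarrow M$ into the harmonic matroid $M$. Theorem \ref{ThmHarmonicPR} applied to this embedding says that $\hi(L_p)$ is a projective rectangle of order $(p,p)$ in which every plane is Desarguesian of order $p$. Since $L_p \subseteq L_p^\bV$, monotonicity of $\hi$ immediately gives $\hi(L_p) \subseteq \hi(L_p^\bV)$, so the embedding assertion in the corollary is automatic.

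It remains to show that a projective rectangle of order $(p,p)$ all of whose planes are Desarguesian of order $p$ is itself a Desarguesian projective plane of order $p$. By Theorem \ref{numberofpoinsinlines} parts \eqref{partitionPR:i} and \eqref{numberofpoinsinlines:f}, the number of points in this rectangle is $1 + (p+1)p = p^2 + p + 1$. Choose any ordinary line of $L_p$; by Corollary \ref{coro;lineinaprojectiveplane} it lies in a plane $\pi$ of $\hi(L_p)$, and $\pi$ is Desarguesian of order $p$, so $|\pi| = p^2 + p + 1$ as well. Hence $\pi$ must contain every point of $\hi(L_p)$.

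Because $\pi$ is full (Proposition \ref{prop:maximalsubplane}), every ordinary line of $\hi(L_p)$ that lies in $\pi$ is a line of $\pi$; since every ordinary line has all its points in $\pi$, every ordinary line of $\hi(L_p)$ is a line of $\pi$. For every special line $s$ of $\hi(L_p)$, the definition of a plane in a projective rectangle guarantees that $s \cap \pi$ is a line of $\pi$, and since all points of $s$ already lie in $\pi$, this intersection equals $s$ itself. Thus the incidence structures of $\hi(L_p)$ and $\pi$ coincide, so $\hi(L_p)$ is a Desarguesian projective plane of order $p$.

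The step I expect to require the most care is the last one: passing from point-set equality to incidence-structure equality between $\pi$ and the ambient rectangle. The subtlety is that a subplane could in principle fail to inherit all the ambient lines, but fullness (Proposition \ref{prop:maximalsubplane}) and the special-line partition property (Theorem \ref{numberofpoinsinlines}\eqref{partitionPR:i}) together force each special line to lie entirely in $\pi$ and each ordinary line through points of $\pi$ to be a line of $\pi$, closing the argument.
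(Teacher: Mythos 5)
Your proposal is correct, but it takes a genuinely different route from the paper's. The paper's proof is a one-line pointer to the proof of Theorem \ref{tecnico3}: there the plane is extracted directly from the harmonic-closure machinery (Lemma \ref{tecnico1}, Lemma \ref{tecnico2}, and Proposition \ref{modularsequence}\eqref{modularsequence:i}, ultimately resting on Fl\'orez's result that $\hi(R_{\mathrm{cycle}}[p])$ is a projective plane of order $p$). You instead treat Theorem \ref{ThmHarmonicPR} as a black box, specialize to $\bV=\bbZ_p$ so that $\hi(L_p)$ is a projective rectangle of order $(p,p)$ with all planes Desarguesian of order $p$, and then prove the general (and reusable) fact that such a rectangle is itself a Desarguesian plane by counting points: $1+(p+1)p=p^2+p+1$ forces any single plane $\pi$ to exhaust the point set, after which the two incidence structures must agree. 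This is a clean reduction and arguably more transparent than the paper's pointer, at the cost of re-deriving a ``trivial rectangle is a plane'' fact that the paper absorbs into its construction. One small wrinkle: in your last step you invoke fullness (Proposition \ref{prop:maximalsubplane}) to conclude that an ordinary line of the rectangle all of whose points lie in $\pi$ is a line of $\pi$, but fullness runs in the opposite direction (lines of $\pi$ carry all their ambient points). The correct justification is Axiom (A\ref{Axiom:A1}) plus the fact that lines of a subplane are traces of ambient lines: two points of $l$ lie in $\pi$, the unique $\pi$-line through them is the trace of the unique ambient line through them, namely $l$ itself, which lies entirely in $\pi$. The special-line case is handled, as you say, by Theorem \ref{prop:twolinesintersectingpp}. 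With that repair the argument is complete; you may also want to note explicitly that an embedding of $L_p$ into $L_p^\bV$ composed with the embedding into $M$ identifies $\hi(L_p)$ (taken in $M$) with a full subplane, hence a plane, of $\hi(L_p^\bV)$, which is the precise sense of ``embedded'' intended in the corollary.
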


\begin{proof} 
This is a consequence of the proof of Theorem \ref{tecnico3}.
\end{proof}

Let $\PP(q)$ denote the projective plane over the field $\bbF_q$.  We know from Theorem \ref{ThmHarmonicPR} with $\bV = (\bbZ_p)^k$ that $\PP(p^k)$ contains a projective rectangle $\hi(L_p^k)$, so any harmonic matroid $M$ that contains the plane $\PP(q)$ also contains a $\hi(L_p^k)$ as a submatroid of the plane.  On the other hand, Theorem \ref{ThmHarmonicPR} states that $M$ contains a rectangle $\hi(L_p^k)$, without reference to projective planes.  We do not know whether this $\hi(L_p^k)$ will be contained in a plane $\PP(p^k)$ in $M$.  It is conceivable that for every harmonic matroid $M$, every $\PR \subseteq M$ is contained in a projective plane in $M$.  This would make the appearance of projective rectangles a sidelight to that of projective planes.

\begin{qn}\label{rectangleinplane}
Is there a full algebraic matroid $M$, or any other harmonic matroid, that contains a projective rectangle that is not itself contained in a projective plane in $M$?

In particular, is the projective rectangle $\hi(L_p^k) \subseteq M$ of Theorem \ref{ThmHarmonicPR} with $\bV = (\bbZ_p)^k$ necessarily contained in a plane $\PP(p^k)$ in $M$?
\end{qn}

\begin{qn}\label{infinitefield}
Is Theorem \ref{ThmHarmonicPR} valid if $L_p^\bV$ is replaced by $L_\bF^\bV$ where $\bF$ is a field other than $\bbZ_p$, in particular an infinite field, and $L_\bF^\bV$ is the complete lift matroid $L_0(\bV{\cdot}K_3)$?  (See Section \ref{Infinitecase} for the rational field.)
\end{qn}

\sectionpage\section{The rational field $\bbQ$} \label{Infinitecase}

We apply the methods of the previous section to the smallest field with characteristic 0, namely, $\bF=\bbQ$ with additive group $\bbQ^+$ and vector space $\bV$ any rational vector space (for example, $\bbR$).  The same proof applies to any countable field whose characteristic is $0$.

As in the previous section we write $\hi(a_g,c_h) := \hi(a_g,b_{g+h},c_h)$; however, here the conjugate sequence is infinite instead of modular.  To express these sequences in terms of $\bbQ$ we need a fixed bijection $\psi:  \bbQ \rightarrow \bbZ_{\geq0}$.  
We use the function $\phi_{\epsilon}: \bbZ_{\geq0} \to \bV$, which is an injection defined by  $\phi_{\epsilon}(t)= t\epsilon-tg+h$ where $g, h,$ and $\epsilon$ are fixed elements of  
$\bV$ such that $\epsilon - g \neq 0$.  For the sake of simplicity, and if there is not ambiguity, we write $\phi(t)$ instead of $\phi_{\epsilon}(t)$.

\begin{lem} \label{tecnico61} 
Let $g$ and $h$ be elements of $\bV$.  If $L_0(\bV{\cdot}K_3)$ is embedded in a harmonic matroid $M$,
then for every $r \in \bV \setminus\{0\} $ there is a point $x_{r}$ in $M$ such that

\begin{enumerate} [{\rm(i)}]

\item \label{tecnico61Part1} for every $\epsilon  \in \bV$,
$\{a_{\epsilon}, b_{\epsilon+ \phi(r)}, c_{\phi(t)}, x_{r} \}$ is collinear;

\item \label{tecnico61Part2} $\hi\{a_0, b_{0}, c_0\} = \{a_0, b_{0}, c_0\} \cup \{ x_i \mid i\in \bV\}$ is a countable modular sequence.  

\end{enumerate}

\end{lem}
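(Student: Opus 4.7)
The plan is to mirror the induction in the proof of Lemma \ref{tecnico1}, preserving its algebraic moves but dropping the prime-termination: the iteration now runs indefinitely over $\bbZ_{\geq 0}$, with the $\bV$-indexing in part (ii) supplied by $\psi \colon \bbQ \to \bbZ_{\geq 0}$ and the $\bbQ$-scalar action on $\bV$. I first invoke the relabeling automorphism $a_f \mapsto a_{f-g}$, $b_f \mapsto b_{f-g-h}$, $c_f \mapsto c_{f-h}$ of $L_0(\bV{\cdot}K_3)$ to reduce to $g=h=0$, so that $\phi(r) = r\epsilon$. Set $x_0 = b_0$ and $x_1 = c_0$, and produce $x_2$ from the harmonic configuration $\HP(a_0, b_0, c_0; a_\epsilon, D, c_\epsilon, b_\epsilon)$ visible in $L_0(\bV{\cdot}K_3)$; because $M$ is harmonic, $x_2$ exists and is independent of $\epsilon$, and $\{a_\epsilon, b_{2\epsilon}, c_\epsilon, x_2\}$ is collinear.

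The inductive step is formally identical to the analogous step in the proof of Lemma \ref{tecnico1}. Assuming the analogs of \eqref{formula1}--\eqref{formula3} for $x_{t-1}$ and $x_t$ and every $\epsilon \in \bV$, I use the $L_0(\bV{\cdot}K_3)$-collinearities $\{a_0, c_{t\epsilon}, b_{t\epsilon}\}$ and $\{a_0, a_{\epsilon \cdot ((t-1)\inv + 1)}, a_\epsilon\}$---well-defined because $\bV$ is a $\bbQ$-vector space and $(t-1)\inv$ exists for every $t \neq 1$---to assemble $\HP(a_0, x_{t-1}, x_t; c_{t\epsilon}, b_{t\epsilon}, a_\epsilon, a_{\epsilon \cdot ((t-1)\inv + 1)})$ in $M$ and define $x_{t+1} := \Hc(a_0, x_t; x_{t-1}, \cdot)$. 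This furnishes collinearity of $\{a_\epsilon, b_{(t+1)\epsilon}, c_{t\epsilon}, x_{t+1}\}$ for every $\epsilon$ and produces an infinite conjugate sequence $a_0, b_0, c_0, x_2, x_3, \dots$ inside $\hi(a_0, b_0, c_0)$. The $\bV$-indexed description in part (ii) is then obtained by relabeling $x_t$ as $x_r$ via $t = \psi(r)$ and inverting the reduction automorphism to recover the general $(g,h)$ case.

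The main obstacle, in contrast with the prime case, is that the sequence no longer closes up, so periodicity must be replaced by two separate facts. First, distinctness of the $x_t$: a coincidence $x_s = x_t$ with $s < t$ would force $x_t$ onto two distinct lines $\overline{a_\epsilon c_{s\epsilon}}$ and $\overline{a_\epsilon c_{t\epsilon}}$, hence $s\epsilon = t\epsilon$ for some nonzero $\epsilon$, impossible in the torsion-free $\bbQ$-vector space $\bV$; equivalently, a coincidence would embed a modular subsequence in $L_0(\bV{\cdot}K_3)$, contradicting Proposition \ref{modularsequence} Part \eqref{modularsequencemprime} since the characteristic is $0$. Second, completeness $\{a_0, b_0, c_0\} \cup \{x_t\} = \hi(a_0, b_0, c_0)$: one verifies that harmonic conjugation among the $x_t$ corresponds, under the index labeling, to the standard harmonic-conjugate operation on the $\bbQ$-projective line, under which $\bbQ$ is already closed; hence the constructed set is itself harmonically closed, and by minimality equals $\hi(a_0, b_0, c_0)$, which legitimizes the ``countable modular sequence'' phrasing in part (ii).
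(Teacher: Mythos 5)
Your construction for part (i) coincides with the paper's: the same induction on $t=\psi(r)$, the same base configuration $\HP(a_0,b_0,c_0;a_\epsilon,D,c_\epsilon,b_\epsilon)$, and the same inductive configuration $\HP(a_0,x_{t-1},x_t;c_{t\epsilon},b_{t\epsilon},a_\epsilon,a_{\epsilon\cdot((t-1)\inv+1)})$. Your distinctness argument (a coincidence $x_s=x_t$ would put the point on two lines through $a_\epsilon$ meeting $C$ in distinct points, forcing a torsion relation in $\bV$) is correct and is a genuine addition, since the paper says nothing about it.

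The gap is in your ``completeness'' step for part (ii). The sequence you construct is closed only under the specific conjugations $\Hc(a_0,x_t;x_{t-1},x_{t+1})$, always taken with respect to the fixed point $a_0$. In a representable model (e.g.\ $M$ a projective plane over a field of characteristic $0$), coordinatize the line $\overline{a_0b_0c_0}$ so that $a_0=\infty$, $b_0=1$, $c_0=0$; the recursion $\Hc(a_0,x_t;x_{t-1},x_{t+1})$ then reads $x_{t+1}=2x_t-x_{t-1}$, so your set is the arithmetic progression $\{1,0,-1,-2,\dots\}\cup\{\infty\}$. That set is not harmonically closed: a single application of the operator $\h$ already produces the harmonic conjugate of $x_0=1$ with respect to $x_1=0$ and $x_2=-1$, which is $-1/3$ and lies outside the set, while $\hi\{a_0,b_0,c_0\}$ is the full net of rationality $\bbQ\cup\{\infty\}$. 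So the fact that $\bbQ$ is closed under harmonic conjugation does not transfer to the subset you built, the appeal to ``minimality'' has no harmonically closed set to apply to, and the claimed equality in part (ii) does not follow; one would have to interleave conjugations with respect to varying triples (which is presumably what the enumeration $\psi$ and the $\bV$-indexing in the statement are meant to organize). To be fair, the paper's own proof is equally silent here---it simply asserts part (ii) from part (i)---so you have inherited the defect rather than introduced it; but as written your justification of part (ii) is not valid.
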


Before the proof we establish another lemma.

\begin{lem} For any $\alpha, \beta \in \bV$, the mapping $a_g, b_{g+h}, c_h \mapsto a_{g+\alpha}, b_{g+h+\alpha+\beta}, c_{h+\beta}$ is an automorphism of $L_0(\bV{\cdot}K_3)$.
\end{lem}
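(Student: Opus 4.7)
The plan is to mimic the short proof given earlier for the analogous automorphism lemma for $L_p^\bV$: verify that the stated map is a bijection on the ground set and that it preserves the collection of rank-$2$ flats of $L_0(\bV{\cdot}K_3)$. Since $L_0(\bV{\cdot}K_3)$ is a rank-$3$ simple matroid defined by its rank-$2$ flats, preserving those flats (together with bijectivity) is all that is required.

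First, I would extend the partial description $a_g, b_{g+h}, c_h \mapsto a_{g+\alpha}, b_{g+h+\alpha+\beta}, c_{h+\beta}$ to a map $\sigma$ on the full ground set $A \cup B \cup C$ by $\sigma(a_g) = a_{g+\alpha}$, $\sigma(b_k) = b_{k+\alpha+\beta}$, $\sigma(c_h) = c_{h+\beta}$, and $\sigma(D) = D$. Bijectivity is immediate because each of translation by $\alpha$, by $\alpha+\beta$, and by $\beta$ is a bijection of $\bV$, and $A, B, C, \{D\}$ are disjoint.

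Next I would check the rank-$2$ flats. The three special flats $A$, $B$, $C$ are preserved setwise because $\sigma$ maps $A \to A$, $B \to B$, $C \to C$, $D \to D$. For an ordinary line $\{a_{g'}, b_{g'+h'}, c_{h'}\}$, compute
\[
\sigma(\{a_{g'}, b_{g'+h'}, c_{h'}\}) = \{a_{g'+\alpha},\; b_{g'+h'+\alpha+\beta},\; c_{h'+\beta}\}.
\]
Setting $g'' := g'+\alpha$ and $h'' := h'+\beta$, the middle index becomes $g''+h''$, so the image equals $\{a_{g''}, b_{g''+h''}, c_{h''}\}$, which is an ordinary line. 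The same substitution shows the map on ordinary lines is surjective (given an ordinary line indexed by $(g'',h'')$, preimage indices $(g''-\alpha, h''-\beta)$ recover it). Hence $\sigma$ permutes the ordinary lines and preserves the special lines, so it permutes all rank-$2$ flats.

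There is no serious obstacle; the lemma is essentially a bookkeeping check, identical in spirit to the earlier $L_p^\bV$ case, with the only difference being that $\bV$ is now a $\bbQ$-vector space rather than a $\bbZ_p$-vector space, which plays no role in the computation. I would finish by remarking that the computation depends only on the additive group structure of $\bV$, so the same argument actually yields an automorphism of $L_0(G \cdot K_3)$ for any abelian group $G$.
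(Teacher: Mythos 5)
Your proposal is correct and takes essentially the same approach as the paper, whose entire proof is the one-line computation that the line $a_g b_{g+h} c_h$ maps to the line $a_{g+\alpha} b_{(g+\alpha)+(h+\beta)} c_{h+\beta}$; you simply make explicit the routine bijectivity and flat-preservation checks that the paper leaves implicit. Your closing remark that only the additive group structure of $\bV$ is used is also consistent with the paper, which states the identical lemma with the identical proof in the $L_p^\bV$ setting.
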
  
\begin{proof} A line $a_g b_{g+h} c_h$ becomes a line $a_{g+\alpha} b_{(g+\alpha)+(h+\beta)} c_{h+\beta}$. \end{proof}

Take $\alpha = -g$ and $\beta = -h$; this converts the general case to the case $g=h=0$ and gives $\phi(t) = t\epsilon$.  (As in the finite case, we think of the automorphism as relabelling $L_0(\bV{\cdot}K_3)$.)  We restate Lemma \ref{tecnico61}:

\begin{enumerate} [{\rm(i)}]
\item \label{tecnico1Part1revQ} for every $\epsilon  \in \bV$, $\{a_{\epsilon}, b_{(t+1)\epsilon}, c_{t\epsilon}, x_{t+1} \}$ is collinear;
\item \label{tecnico1Part2revQ} $\hi\{a_0, b_{0}, c_0\} = \{a_0, b_{0}, c_0\} \cup \{ x_i \mid i\in \bbZ_{\geq0}\}$ is a countable conjugate sequence.
\end{enumerate}

Now we are in a copy of $L_0(\bV{\cdot}K_3)$ with $A_\epsilon=\{a_{i\epsilon}: i\in \bbQ\}$, $B_\epsilon=\{b_{i\epsilon}: i\in \bbQ\}$, and $C_\epsilon=\{c_{i\epsilon}: i\in\bbQ\}$.
  That reduces the proof to $L_0(\bbQ^+{\cdot}K_3)$.

\begin{proof}[Proof of Lemma \ref{tecnico61}] 

We use mathematical induction  on $t=\psi(r) \in \bbZ_{\geq0}$.  We define $x_0=b_0$ and $x_1=c_0$.  Suppose that $t=1$.  From the definition of $L_0(\bV{\cdot}K_3)$  
we deduce that the configuration 
\begin{equation}\label{formula60}
 \HP (a_0, b_0, c_0; a_{\epsilon}, D, c_{\epsilon}, b_{\epsilon} )
\end{equation}
holds in $M$.  This implies that there is a point $x_2 \in M$ with 
$\Hc(a_0, c_0; b_0, x_2)$ such that $\{a_{\epsilon}, b_{2\epsilon}, c_{\epsilon}, x_{2}\}$
and $\{a_{0}, b_0, c_{0}, x_{2}\}$ are collinear.  We observe that $x_2$ depends only on $a_0,b_0,c_0$ and is independent of $\epsilon$, because $M$ is a harmonic matroid.  
This proves Part \eqref{tecnico61Part1} and Part \eqref{tecnico61Part2} for $t=1$.

We now fix $t=\psi(r)$ for some $r \in  \bV$ and suppose that there are points $x_{t-1}$ and $x_{t}$ in $M$ such that
\begin{equation}\label{formula61}
\{a_{\epsilon},b_{(t-1)\epsilon}, c_{(t-2)\epsilon}, x_{{t-1}}\},
\end{equation}
\begin{equation}\label{formula62}
\{a_{\epsilon},b_{t\epsilon} , c_{(t-1)\epsilon}, x_{t}\},  \text{ and }
\end{equation}
\begin{equation}\label{formula63}
\{a_{0}, b_0, c_{0}, x_{2}, \dots, x_{t}\}
\end{equation}
are collinear sets.  
From the definition of the matroid $L_0(\bV{\cdot}K_3)$ we know that the sets
$\{a_{0}, c_{t\epsilon}, b_{t\epsilon}\}$ and $\{a_{0}, a_{\epsilon\cdot((t-1)^{-1}+1)}, a_{\epsilon}\}$ are collinear.

Taking $\epsilon\cdot (t-1)^{-1}t$ instead of $\epsilon$ in \eqref{formula61} and \eqref{formula62}, we deduce that
$\{a_{\epsilon\cdot(t-1)^{-1}t }, b_{t\epsilon}, x_{t-1}\}$ and 
$\{a_{\epsilon\cdot(t-1)^{-1}t }, c_{t\epsilon}, x_{t}\}$ are collinear sets.  These two sets are respectively equal to 
$\{a_{\epsilon\cdot((t-1)^{-1}+1)} , b_{t\epsilon}, x_{t-1}\}$ and $\{a_{\epsilon\cdot((t-1)^{-1}+1)} , c_{t\epsilon}, x_{t}\}$.  
These,  \eqref{formula62}, and  \eqref{formula63} imply that
$$
\HP(a_0, x_{t-1},x_{t}; c_{t\epsilon}, b_{t\epsilon},
a_{\epsilon}, a_{\epsilon\cdot((t-1)^{-1}+1)})
$$
holds in $M$.  Therefore, there is a point $x_{t+1} \in M$ with 
$\Hc (a_0, x_{t}; x_{t-1}, x_{t+1} )$ such that 
$\{a_{\epsilon}, c_{t\epsilon}, x_{t+1}\}$ and  $\{a_0, b_0, c_0, x_2, \dots , x_{t+1}\}$  
are collinear.

Note that we have proved $\Hc(a_0,x_{t};x_{t-1},x_{t+1})$ for $ t=\psi(r) \text{ with } r \in \bbQ$. 

Proof of Part \eqref{tecnico61Part2}. From the proof of Part \eqref{tecnico61Part1} we know that $\hi\{a_0, b_{0}, c_0\} = \{a_0, b_{0}, c_0\} \cup \{ x_i \mid i\in \bbZ_{\geq0}\}$ is a countable conjugate sequence.
\end{proof}

Recall that $A$, $B$ and $C$ are the three lines of $L_0(\bV{\cdot}K_3)$ that are concurrent at $D$. 
Let $R_{\infty}=A,$ $R_{0}=B,$ $R_1=C$.  For $t = \psi(r) \in \bbZ_{>0}$, where $r \in\bbQ$,  we define $R_{t+1}$ recursively as 
$$\{ z : \exists\ a \in R_{\infty} \setminus D, \ x \in R_{t-1} \setminus D, \ y \in R_t \setminus D \text{ such that } \Hc(a,y;x,z) \} \cup \{D\}.$$
As in Section \ref{sec:PRinharmonic} we have the following properties.

\begin{cor}\label{cortecnico6} 
The set $R_{\psi(r)}$ is collinear for each $r \in\bbQ$.
\end{cor}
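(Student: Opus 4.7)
\medskip

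\noindent\textbf{Proof plan for Corollary \ref{cortecnico6}.} The plan is to prove the corollary by induction on $t = \psi(r)$, paralleling Construction \ref{C:Rt} and Proposition \ref{P:Rt} from the finite-characteristic setting in Section \ref{sec:PRinharmonic}. The only essential change is that the conjugate sequences produced by Lemma \ref{tecnico61} are countably infinite rather than modular of prime order, so the induction has no ``wrap-around'' step but otherwise proceeds identically.

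The base cases $t = \infty, 0, 1$ are immediate, since $R_\infty = A$, $R_0 = B$, and $R_1 = C$ are by definition three of the rank-$2$ flats of $L_0(\bV{\cdot}K_3)$ through $D$. For the inductive step I would assume that $R_{t-1}$ and $R_t$ are collinear through $D$ and moreover that a labeling $x_{t-1,h}, x_{t,h}$ ($h \in \bV$) has been fixed so that $R_\infty \cup R_{t-1} \cup R_t$ is isomorphic to $L_0(\bV{\cdot}K_3)$, with $\{a_g, x_{t-1, g+h}, x_{t,h}\}$ collinear for all $g,h \in \bV$ (the $\bbQ$-analogue of Corollary \ref{tecnicolabeliso2}). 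Applying Lemma \ref{tecnico61} inside this embedded copy yields, for every $(g,h) \in \bV \times \bV$, a countable conjugate sequence $\hi(a_g, x_{t-1, g+h}, x_{t,h})$ whose next term after $x_{t,h}$ is the harmonic conjugate of $x_{t-1, g+h}$ with respect to $a_g$ and $x_{t,h}$.

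The main step is to show that the points $z$ in $R_{t+1} \setminus \{D\}$ arising from all admissible harmonic configurations $\Hc(a_g, x_{t,h}; x_{t-1, g+h}, z)$ lie on a single line of $M$ through $D$. Following the finite-case argument in Construction \ref{C:Rt}, I would first fix the ``anchor'' point $x_{t+1}$ obtained from the triple $(a_0, x_{t-1,0}, x_{t,0})$, then for any other $(g',h')$ pass through the auxiliary harmonic configuration $\HP(a_0, x_{t-1,h'}, x_{t,h'}; a_{g'}, D, x_{t,h''}, x_{t-1,h''+g'})$ (whose existence is guaranteed by the inductive isomorphism) to deduce that the harmonic conjugate determined by $(a_{g'}, x_{t-1, g'+h'}, x_{t,h'})$ also lies on $\overline{D x_{t+1}}$. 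The rational analogue of Lemma \ref{labelformula} (with the linear formula $x_{t+1,g,h} = x_{t+1,(1-(t+1))g+h}$) then gives a well-defined labeling of $R_{t+1}$ by $\bV$, simultaneously confirming collinearity and establishing the isomorphism $R_\infty \cup R_t \cup R_{t+1} \cong L_0(\bV{\cdot}K_3)$ required to continue the induction.

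The main obstacle I expect is the bookkeeping of this labeling: one must verify that two distinct triples $(a_g, x_{t-1,g+h}, x_{t,h})$ and $(a_{g'}, x_{t-1,g'+h'}, x_{t,h'})$ with $x_{t,h} = x_{t,h'}$ (i.e., $h = h'$) produce the same harmonic conjugate in $R_{t+1}$, and conversely that distinct values of $h$ produce distinct points on $\overline{D x_{t+1}}$. Uniqueness in $M$ is given, so the issue is purely combinatorial, and the arithmetic identities used in the finite case involving $(t-1)\inv$ transfer verbatim to $\bbQ$ (in fact more easily, since there are no characteristic obstructions). Once the inductive step is closed, the conclusion that $R_{\psi(r)}$ is collinear follows for all $r \in \bbQ$.
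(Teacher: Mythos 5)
Your proposal is correct and follows essentially the same route as the paper, which states Corollary \ref{cortecnico6} without a separate argument, deferring (``As in Section \ref{sec:PRinharmonic}\dots'') to the inductive construction, labeling lemma, and coplanarity bookkeeping of Construction \ref{C:Rt} and Lemma \ref{labelformula}; you have simply written out that transfer, correctly noting that the only change is the absence of the wrap-around step since the conjugate sequences are infinite rather than modular. No gaps.
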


\begin{cor}\label{cortecnico56}  Let $g,h \in \bV$ be fixed.  Then for each $r \in\bbQ$, 
$R_{\psi(r)} \cap \hi(a_g,c_h) \ne \emptyset$.
\end{cor}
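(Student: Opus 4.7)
The plan is to apply Lemma \ref{tecnico61} directly with the base pair $(g,h)$: that lemma produces an infinite conjugate sequence lying entirely on the line $\hi(a_g,c_h)$, and then a straightforward induction matches the recursive indexing of the $R_t$ with the order in which the successive harmonic conjugates appear on that line.

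First I would invoke Lemma \ref{tecnico61} with base $(g,h)$ (using the same automorphism rebasing as in the restatement of that lemma) to obtain the countable conjugate sequence $\hi\{a_g,b_{g+h},c_h\}$ along the line $\hi(a_g,c_h)$. Label its terms $y_0=b_{g+h}$, $y_1=c_h$, and for each $t\geq 1$ let $y_{t+1}$ be the unique point satisfying $\Hc(a_g,y_t;y_{t-1},y_{t+1})$ provided by the lemma. Every $y_t$ lies on the ordinary line $\hi(a_g,c_h)$, so in particular $y_t \neq D$ for all $t$, and (since we are in characteristic $0$) the terms are distinct.

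Next I would prove by induction on $t \in \bbZ_{\geq 0}$ that $y_t \in R_t$. The base cases are immediate: $y_0 = b_{g+h} \in B = R_0$ and $y_1 = c_h \in C = R_1$. For the inductive step, assume $y_{t-1} \in R_{t-1}\setminus D$ and $y_t \in R_t\setminus D$. Since also $a_g \in R_\infty \setminus D$ and $\Hc(a_g,y_t;y_{t-1},y_{t+1})$ holds, the recursive definition of $R_{t+1}$ in the paragraph preceding Corollary \ref{cortecnico6} places $y_{t+1} \in R_{t+1}$. For a given $r \in \bbQ$, setting $t = \psi(r)$ then yields $y_t \in R_{\psi(r)} \cap \hi(a_g,c_h)$, which is the desired conclusion.

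There is essentially no obstacle once Lemma \ref{tecnico61} is available, since that lemma supplies both the existence of the harmonic conjugates on $\hi(a_g,c_h)$ and their distinctness from $D$. The present corollary is really an index-matching observation aligning the recursive construction of the $R_t$ with the order in which successive harmonic conjugates are produced along the line $\hi(a_g,c_h)$; the only mild caveat is to confirm that the base pair $(g,h)$ version of Lemma \ref{tecnico61} is what is being used (rather than the reduced $(0,0)$ form used in the proof), which is justified by the same automorphism rebasing that appears in Section \ref{Infinitecase}.
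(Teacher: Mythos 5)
Your proof is correct and follows exactly the route the paper intends: the paper states this corollary without an explicit proof (``As in Section \ref{sec:PRinharmonic} we have the following properties''), and the implicit argument is precisely your index-matching of the conjugate sequence from Lemma \ref{tecnico61} with the recursive definition of the $R_{t}$, mirroring how Lemma \ref{tecnico1} and Construction \ref{C:Rt} are used in the finite case. Your observation that each $y_t$ lies on the ordinary line $\hi(a_g,c_h)$ and hence is distinct from $D$ (so the recursive definition of $R_{t+1}$ applies) is the one point worth making explicit, and you make it.
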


\begin{construction}[Projective Rectangle by Rational Harmonic Conjugation] \label{HarmonicConjPRQ}
Let $\PR:=(\cP,\mathcal{L},\mathcal{I})$, where 	
\begin{align*}
\cP \   &:={ \bigcup_{g,h \in \bV} \hi(a_g,c_h)} \cup \{D\} = R_{\infty} \cup {\bigcup_{i=0}^{\infty} R_i},\\
\cL  \ &:= \{R_i : i= \infty, 0, \dots, \} \cup \{\hi(a_g,c_h) :  g,h \in \bV\},
\end{align*}
and $\mathcal{I}$ is the incidence relation of set membership. 
\end{construction}

\begin{thm}\label{thm:Q}
The system $\PR$  is a projective rectangle of order $(|\bbQ|,|\bV|)$.
\end{thm}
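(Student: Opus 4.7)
The plan is to mirror the proof of Theorem \ref{tecnico3} step by step, substituting Lemma \ref{tecnico61} and Corollaries \ref{cortecnico6} and \ref{cortecnico56} for their finite-$p$ counterparts. We verify the six axioms (A\ref{Axiom:A1})--(A\ref{Axiom:A6}) for the system $\PR$ of Construction \ref{HarmonicConjPRQ} and then read off the order.

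Axioms (A\ref{Axiom:A2}), (A\ref{Axiom:A3}), (A\ref{Axiom:A4}) are immediate from the construction: $D$ serves as the special point, each $R_i$ is a special line, every line contains at least three points (by Lemma \ref{tecnico61} for the ordinary lines $\hi(a_g,c_h)$ and by Corollary \ref{cortecnico56} applied to the $R_i$), and four points with no three collinear can be found already inside $L_0(\bV{\cdot}K_3)$. For (A\ref{Axiom:A5}), the special lines $R_i$ are pairwise disjoint except at $D$ by their recursive definition, while Lemma \ref{tecnico61} guarantees that each ordinary line $\hi(a_g,c_h)$ meets each $R_i$ in exactly one point. Axiom (A\ref{Axiom:A1}) is proved as in Theorem \ref{tecnico3}: two points not jointly in any special line determine $w \in R_s$ and $q \in R_t$ with $s \neq t$, and a common ordinary line $\hi(a_g,c_h)$ is found by noting that the ordinary lines through $w$ cover $\cP \setminus R_s$.

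The main obstacle is Axiom (A\ref{Axiom:A6}). In the finite-$p$ setting this rested on Lemma \ref{tecnico2} and Proposition \ref{mainprop}, which in turn invoked Proposition \ref{modularsequence}: the harmonic closure of a harmonic configuration is a finite Desarguesian projective plane of prime order. In the rational setting the conjugate sequences are countably infinite rather than modular, so the key intermediate step is to establish the analog that $\hi(T)$ of any harmonic configuration $T$ in $\hi(L_0(\bV{\cdot}K_3))$ is a (countably infinite) Desarguesian projective plane coordinatized by $\bbQ$. Granted this, the two-intersecting-lines part of Lemma \ref{tecnico2} and the Pasch argument of Proposition \ref{mainprop} carry over by the same case analysis, only with ordinary rational arithmetic in place of arithmetic mod $p$, and (A\ref{Axiom:A6}) follows because any two intersecting ordinary lines together with two transversals lie in that common plane. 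The delicate step is precisely this infinite planar closure: one must show that iterated harmonic conjugation starting from a harmonic configuration closes into a set carrying the incidence structure of the Desarguesian projective plane over $\bbQ$, an infinite analog of the argument behind Proposition \ref{modularsequence}. Once the axioms are verified, the order $(|\bbQ|,|\bV|)$ is read off by counting the special lines (indexed by $\{\infty\}\cup\bbZ_{\geq 0}$) and the ordinary points on a single $R_i$ (one intersection with $\hi(a_g,c_h)$ for each relevant pair in $\bV$).
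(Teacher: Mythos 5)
Your proposal follows essentially the same route as the paper: the paper's entire proof of Theorem \ref{thm:Q} is the single remark that the argument of Theorem \ref{tecnico3} carries over, with the modifications for infinite sequences not affecting any of the harmonic relations or calculations. Your verification of Axioms (A\ref{Axiom:A1})--(A\ref{Axiom:A5}) and your reading of the order $(|\bbQ|,|\bV|)$ match what the paper intends. Where you go beyond the paper is in isolating the one genuine subtlety, in Axiom (A\ref{Axiom:A6}): the finite proof routes through Lemma \ref{tecnico2} and Proposition \ref{mainprop}, which rest on Proposition \ref{modularsequence}, and that proposition's proof is intrinsically finite --- it identifies the restriction of $M$ to the configuration with the Reid cycle matroid $R_{\mathrm{cycle}}[m]$, uses embeddability in a harmonic matroid to force $m$ prime, and then cites the result that $\hi(R_{\mathrm{cycle}}[m])$ is a projective plane of order $m$. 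None of that applies verbatim when the conjugate sequences are infinite rather than modular, so the assertion that $\hi(T)$ of a harmonic configuration in $\hi(L_0(\bV{\cdot}K_3))$ is the Desarguesian plane over $\bbQ$ needs a separate argument or citation, which neither you nor the paper supplies. You name this as ``the delicate step'' but leave it unproven, so as a self-contained proof your attempt has that one open step; it is, however, faithful to the paper's approach and more candid than the paper about where the real work lies.
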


\begin{proof}
The proof is essentially the same as that of Theorem \ref{tecnico3} for the field $\bbZ_p$.  The modifications due to having infinite sequences do not affect any of the harmonic relations or calculations.
\end{proof}

\sectionpage\section{A submatroid that generates a projective rectangle} \label{sec:minimalmatroid}

In this section we prove that a generalization of $R_{\mathrm{cycle}}[p]$ is
a matroid that extends by harmonic conjugation to the projective rectangle 
$\hi(L_p^k)$, where $p$ is a prime number and $k$ is a positive integer.  
We define the generalization in terms of any positive integer $m$.  
The matroid $L_m^k$, where $k$ and $m$ are positive integers, is defined exactly like $L_p^k$ in Section \ref{sec:harmonicmatroids}.    
Let $\{g_1, \dots, g_k \}$ be a minimal set of
generators of $\fG := (\bbZ_m)^k$. 
We set $G= \{0, g_1, \dots, g_k \}$. 
We write $R_{\mathrm{cycle}}^k[m]$ to mean any matroid isomorphic to the restriction
$$L_m^k | (A \cup B \cup \{c_f \mid f \in G \}).$$  
We call this the \emph{higher Reid cycle matroid}.  
\begin{figure}[htbp]
\begin{center}
\includegraphics[scale=.7]{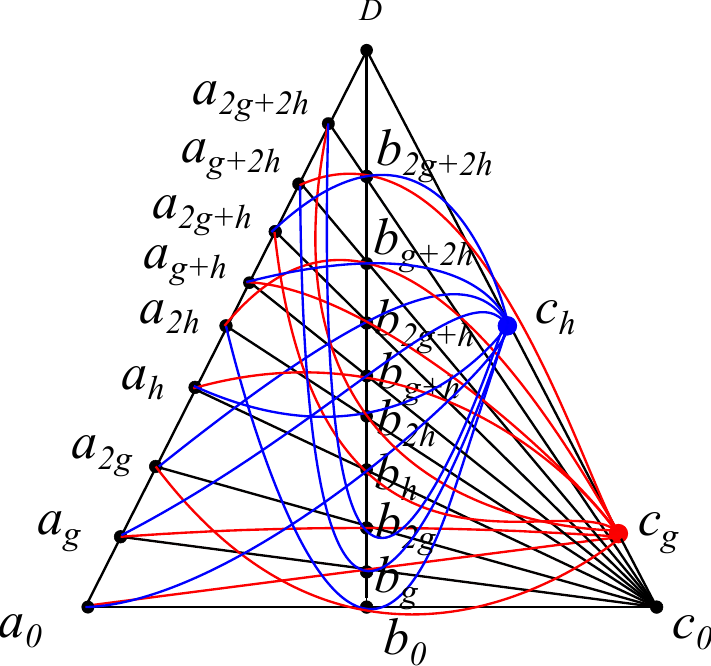}
\caption{The higher Reid cycle matroid $R_{\mathrm{cycle}}^2[3]$ with group $\bbZ_3\times\bbZ_3$; the identity element is $e=(0,0)$.}
\label{F:ReidCycle3x3}
\end{center}
\end{figure}

Gordon \cite{Gordon}, for $m=p$ (a prime) and Fl\'orez \cite{rflc} (for all $m$) found the algebraic representability of $R_{\mathrm{cycle}}[m]$.

\begin{lem}[{\cite[Theorem 2]{Gordon}} and {\cite[Theorem 2]{rflc}}] \label{Rembedding}
The Reid cycle matroid $R_{\mathrm{cycle}}[m]$ is algebraically representable, and is linearly representable, if and only if $m$ is a prime number.
\end{lem}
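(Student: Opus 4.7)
The plan is to prove both directions by working with explicit coordinates in the prime case and deriving arithmetic constraints in the converse, then lift the linear argument to the algebraic setting.

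For the ``if'' direction, assume $m=p$ is prime. I would exhibit a linear representation of $L_p$ over $\bbF_p$ by placing $D=[0:0:1]$ at the intersection of three special lines $A\colon Y=0$, $B\colon X=0$, $C\colon X+Y=0$ in $\bbP^2(\bbF_p)$, and labeling $a_g=[1:0:g]$, $b_g=[0:1:g]$, $c_g=[-1:1:g]$ for $g\in\bbZ_p$. A routine $3\times 3$ determinant verifies that $\{a_g,b_{g+h},c_h\}$ is linearly dependent exactly when its indices correspond to a line of $L_p$, using $p\cdot 1=0$ in $\bbF_p$. Restricting to $R_{\mathrm{cycle}}[p]=L_p|(A\cup B\cup\{c_0,c_1\})$ gives the desired representation, and since linear representability implies algebraic representability, this half of the statement follows.

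For the converse in the linear case, suppose $R_{\mathrm{cycle}}[m]$ is represented linearly over a field $\bbF$. A projective change of coordinates puts $D,A,B,C$ in the standard position above; I then write $a_g=[1:0:\alpha_g]$, $b_g=[0:1:\beta_g]$, $c_i=[-1:1:\gamma_i]$ and use the remaining three-parameter gauge freedom to normalize $\alpha_0=\beta_0=0$ and $\alpha_1=1$. The incidences $\{a_0,b_0,c_0\}$, $\{a_1,b_1,c_0\}$, $\{a_0,b_1,c_1\}$ force $\gamma_0=0$ and $\beta_1=\gamma_1=1$. The key step is an induction on $g$ using the ordinary lines $\{a_g,b_g,c_0\}$ and $\{a_g,b_{g+1},c_1\}$: the first forces $\beta_g=\alpha_g$ and the second forces $\beta_{g+1}=1+\alpha_g$, which together yield $\alpha_{g+1}=\alpha_g+1$, so $\alpha_g=g$ in $\bbF$ for every $g\in\{0,\dots,m-1\}$. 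Finally, the wrap-around line $\{a_{m-1},b_0,c_1\}$ (which lies in $R_{\mathrm{cycle}}[m]$ since $(m-1)+1\equiv 0 \pmod{m}$) produces a determinant equal to $m\in\bbF$, forcing $\operatorname{char}(\bbF)\mid m$. On the other hand, distinctness of $a_0,\dots,a_{m-1}$ requires the integers $0,1,\dots,m-1$ to be distinct in $\bbF$, i.e., $\operatorname{char}(\bbF)=0$ or $\operatorname{char}(\bbF)\ge m$. The only way to satisfy both constraints is $\operatorname{char}(\bbF)=m$, so $m$ is prime.

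For algebraic representability the same arithmetic constraint $\alpha_{g+1}-\alpha_g=1$ must be extracted, now from incidences expressed via algebraic rather than linear dependence. I would follow the Ingleton--Lindstr\"om strategy: attach derivations via the Jacobian criterion at a generic point of the algebraic representation, and use the chain rule to convert algebraic dependencies of point triples into linear dependencies among the associated tangent vectors. This produces a linear representation of $R_{\mathrm{cycle}}[m]$ over a field whose characteristic matches that of the algebraic representation (or a suitable specialization), at which point the previous analysis applies verbatim. The main obstacle I expect is the characteristic-zero case, where naive derivations can lose the integer identity $m\equiv 0$; one must either work with a formal group structure on each special line (as in Gordon's proof \cite{Gordon}) or pass to a positive-characteristic reduction (as in Fl\'orez's approach \cite{rflc}) to recover the divisibility $\operatorname{char}\mid m$. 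Once that step is in place, the same pair of incompatible constraints forces $m$ to be prime.
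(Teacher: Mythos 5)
The first thing to note is that the paper supplies no proof of this lemma at all: it is imported verbatim from Gordon \cite{Gordon} and Fl\'orez \cite{rflc}, so the real comparison is with those sources. Your linear half is complete, correct, and is essentially the standard coordinatization argument. The representation $a_g=[1:0:g]$, $b_g=[0:1:g]$, $c_h=[-1:1:h]$ over $\bbF_p$ does realize $L_p$ (the relevant determinant is $g+h-k$ for the triple $a_g,b_k,c_h$, vanishing exactly on the prescribed lines), and in the converse your gauge count is right (the stabilizer of $D$ and the three special lines is the three-parameter group $\alpha\mapsto c\alpha+d$, $\beta\mapsto c\beta+e$), the two families of incidences $\{a_g,b_g,c_0\}$ and $\{a_g,b_{g+1},c_1\}$ do force $\alpha_g=g\cdot 1_{\bbF}$, the wrap-around line forces $m=0$ in $\bbF$, and simplicity forces $0,1,\dots,m-1$ to be distinct, whence $\operatorname{char}\bbF=m$ and $m$ is prime.

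The gap is in the algebraic half, which you leave as a plan, and the plan locates the difficulty in the wrong place. The derivation (Ingleton--Lindstr\"om) method is unproblematic precisely in characteristic $0$: separability is automatic there, algebraic dependence passes faithfully to linear dependence of differentials at a generic point, and one obtains a genuine linear representation over a characteristic-$0$ field --- which your linear analysis has already excluded, since it forces $\operatorname{char}=m>0$. The genuine obstruction is positive characteristic, where inseparability can annihilate the relevant derivations and algebraic representability does \emph{not} reduce to linear representability; so the claim that ``the previous analysis applies verbatim'' after differentiating is exactly the step that is not justified, and for composite $m$ one must still exclude every positive characteristic. Closing that case requires the actual content of the cited theorems --- Gordon's argument, or Fl\'orez's route of embedding in a full algebraic matroid (harmonic by Lindstr\"om \cite{lhc}) and extracting the primality of $m$ from the order of the resulting modular sequence, which is the mechanism this paper reuses in Proposition \ref{modularsequence}. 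As written, your algebraic direction ultimately defers to the same two references the lemma already cites, so it does not go beyond the paper on that half; the linear half, by contrast, is a genuine self-contained proof.
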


Fl\'orez \cite{rflc} also found that $L_p^k$ (recall that $p$ is prime) embeds in some harmonic matroid for every $k >0$.  More precisely, harmonic conjugation is involved when $k=1$:

\begin{lem} [{\cite[Lemma 9]{rflc}}] \label{mnembedding} Any embedding of $R_{\mathrm{cycle}}[p]$
in a harmonic matroid $M$ extends uniquely by harmonic conjugation to an embedding of $L_p$ in $M$.
\end{lem}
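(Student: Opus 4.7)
The plan is to show that iterated harmonic conjugation, seeded inside $R_{\mathrm{cycle}}[p]$, forces the existence in $M$ of points playing the roles of $c_2,\dots,c_{p-1}$ together with all the missing $L_p$ collinearities, and that this extension is forced.

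I would first exhibit, inside $R_{\mathrm{cycle}}[p]$ itself, modular conjugate sequences of order $p$ on each of the two special lines $A$ and $B$. For every $t\in\{1,\dots,p-1\}$ the diagram
\[
\HP(D,a_{t-1},a_t;\,b_{t+1},b_t,c_0,c_1)
\]
(with $b_p=b_0$) is a harmonic configuration whose three ordinary-line collinearities $\{a_{t-1},b_t,c_1\}$, $\{a_t,b_{t+1},c_1\}$, $\{a_t,b_t,c_0\}$ are already lines of $R_{\mathrm{cycle}}[p]$; its harmonic conjugate in $M$ is $\overline{b_{t+1}c_0}\cap A=a_{t+1}$, so $\Hc(D,a_t;a_{t-1},a_{t+1})$ holds in $M$ and $D,a_0,\dots,a_{p-1}$ is a modular sequence of order exactly $p$. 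The symmetric configuration $\HP(D,b_{t-1},b_t;\,a_t,a_{t-1},c_1,c_0)$ does the same for $B$. I would then apply Proposition~\ref{modularsequence} to
\[
T:=\HP(D,a_0,a_1;\,b_2,b_1,c_0,c_1)\subseteq R_{\mathrm{cycle}}[p]:
\]
Part~\eqref{modularsequence:i} produces a Desarguesian projective plane $\pi=\hi(T)\subseteq M$ whose order is the modular order of $\hi(D,a_0,a_1)$, namely $p$, and Part~\eqref{modularsequence:ii}, together with the two modular sequences already in hand, places all of $R_{\mathrm{cycle}}[p]$ inside $\pi$. Thus the line $C$ of $\pi$ already contains $D,c_0,c_1$ and has exactly $p-2$ further points waiting to be named.

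Inside $\pi$ I would define $c_t:=h(D,c_{t-1};c_{t-2})$ for $t=2,\dots,p-1$ and prove by induction on $t$ that $\{a_g,b_{g+t},c_t\}$ is a line of $\pi$ for \emph{every} $g\in\bbZ_p$, using as witness
\[
\HP(D,c_{t-2},c_{t-1};\,b_{g+t},b_{g+t-1},a_g,a_{g+1}).
\]
Its three ordinary-line collinearities are $L_p$ incidences with indices $t-2$ and $t-1$, available from the induction hypothesis (or from $R_{\mathrm{cycle}}[p]$ when $t=2$). Because $M$ is harmonic the conjugate is unique, so varying $g$ only rotates the witness but always returns the same point $c_t$; this forces $c_t$ to lie on every $\overline{a_g b_{g+t}}$, giving the desired $L_p$-line. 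Uniqueness of the extension is then automatic: any embedding of $L_p$ into $M$ agreeing with the given one on $R_{\mathrm{cycle}}[p]$ must send $c_t$ to $\overline{a_0 b_t}\cap C$, a point already determined in $M$.

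I expect the main obstacle to be precisely this ``rotation'' step in the induction: what makes it work is the uniqueness of harmonic conjugates in $M$ combined with the already-secured identification of $\pi$ as a Desarguesian plane of prime order $p$ in which $C$ meets each ordinary line in exactly one point. Everything else---identifying the right configurations in $R_{\mathrm{cycle}}[p]$, setting up the induction, and deducing uniqueness of the labeling---is then bookkeeping.
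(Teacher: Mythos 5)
The paper does not actually prove Lemma~\ref{mnembedding}; it is quoted from \cite[Lemma 9]{rflc}, and the only information the paper records about that proof is that the new points satisfy $\Hc(c_j,D;c_{j-1},c_{j+1})$ and the collinearities $\{a_i,b_{i+j},c_j\}$. Your reconstruction is correct and produces exactly that data: all the harmonic configurations you exhibit are genuine (each of the five required collinear triples is a line of $R_{\mathrm{cycle}}[p]$ or an already-established $L_p$-line), the conjugate $\overline{b_{t+1}c_0}\cap A=a_{t+1}$ is forced because two distinct lines of a simple matroid meet in at most one point, and the ``rotation'' over $g$ is exactly the uniqueness clause in the definition of a harmonic matroid, so $c_t$ lands on every $\overline{a_gb_{g+t}}$. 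The one criticism worth making is your detour through Proposition~\ref{modularsequence}: it is both unnecessary and mildly dangerous. Unnecessary, because the induction of your third paragraph needs only the uniqueness of harmonic conjugates in $M$ together with the line $\overline{Dc_0c_1}$, not the Desarguesian plane $\pi$ (your closing claim that the rotation step is powered by the identification of $\pi$ misattributes the source of the force). Dangerous, because the proof of Proposition~\ref{modularsequence} invokes \cite[Proposition 16]{rfhc}, which in the literature sits downstream of the very lemma you are proving, so leaning on it risks circularity in a from-scratch argument. Dropping $\pi$ entirely leaves a clean, self-contained proof; what remains to be written out is the routine bookkeeping you defer (the $c_t$ are pairwise distinct and distinct from $D$, and no unintended collinearities $\{a_g,b_{g'},c_h\}$ with $g'\neq g+h$ arise, both of which follow by intersecting with the special lines), plus the uniqueness statement, which you handle correctly.
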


The proof of Lemma \ref{mnembedding} shows that there are points
$c_2, c_3, \dots , c_{p-1}$ that extend $R_{\mathrm{cycle}}[p]$ to $L_p$.
Those points satisfy harmonicity, $H(c_{j},D; c_{j-1}, c_{j+1})$,  and that
$\{a_i,b_{i+j},c_j\}$ is a collinear triple for $i, j \in \bbZ_p$.

We wish to extend these results to $R_{\mathrm{cycle}}^k[p]$.  Our objectives are the following two theorems as well as Proposition \ref{lpnembedding}.

\begin{thm}\label{hrprimenon}
For every $k>0$, the higher Reid cycle matroid $R_{\mathrm{cycle}}^k[m]$ is linearly representable, and is algebraically representable, if and only if $m$ is a prime number.  The same holds true for $L_m^k$.
\end{thm}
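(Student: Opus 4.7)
The plan is to prove this biconditional in two directions, starting from the observation that linear representability implies algebraic representability (so the nontrivial content is: prime $\Rightarrow$ linearly representable and non-prime $\Rightarrow$ not algebraically representable).

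For the forward direction (if $m=p$ is prime), I would build an explicit linear representation of $L_p^k$ over the field $\bbF_{p^k}$. The additive group of $\bbF_{p^k}$ is $(\bbZ_p)^k$, so we can identify the index set of $L_p^k$ with $\bbF_{p^k}$. Embed $L_p^k$ in the Desarguesian plane $PG(2,p^k)$ by choosing three concurrent lines through $D=[0:0:1]$ and parametrizing their ordinary points as $a_g=[1:0:g]$, $b_g=[0:1:-g]$, $c_h=[1:1:-h]$. A $3\times3$ determinant check then verifies that $\{a_g, b_{g+h}, c_h\}$ is collinear in $PG(2,p^k)$ for every $g,h\in\bbF_{p^k}$, which shows that $L_p^k$ linearly embeds in $PG(2,p^k)$. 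Since $R_{\mathrm{cycle}}^k[p]$ is by definition a restriction of $L_p^k$, it inherits the linear representation (hence also algebraic representability).

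For the reverse direction (if $m$ is not prime), I would show that $R_{\mathrm{cycle}}[m]$ is isomorphic to a restriction of $R_{\mathrm{cycle}}^k[m]$. Pick any one generator $g_1\in G$; note that $g_1$ has order $m$ in $(\bbZ_m)^k$ when the generating set is taken to be the standard basis. Restrict $R_{\mathrm{cycle}}^k[m]$ to the set
\[
\{a_{ig_1}:i\in\bbZ_m\}\cup\{b_{ig_1}:i\in\bbZ_m\}\cup\{c_0,c_{g_1}\}\cup\{D\}.
\]
The surviving rank-2 flats are exactly $\{a_{ig_1},b_{ig_1},c_0\}$, $\{a_{ig_1},b_{(i+1)g_1},c_{g_1}\}$, together with the two special lines, and the map $i\leftrightarrow ig_1$ identifies this restriction with $R_{\mathrm{cycle}}[m]$. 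By Lemma \ref{Rembedding}, $R_{\mathrm{cycle}}[m]$ is not algebraically representable when $m$ is not prime. Since algebraic representability passes to restrictions (the same algebraic embedding works on a subset of the ground set), $R_{\mathrm{cycle}}^k[m]$ is not algebraically (and thus not linearly) representable. Because $R_{\mathrm{cycle}}^k[m]$ is itself a restriction of $L_m^k$, the same conclusion transfers to $L_m^k$.

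The part most likely to require care is the forward direction: although the determinant calculation verifying collinearity in $PG(2,p^k)$ is elementary, one must check that the parametrization is a \emph{matroid isomorphism}, i.e., that no spurious collinearities are introduced among ordinary points. This is easily handled by observing that distinct $a_g$ lie on the common line $Y=0$, so any three collinear ordinary points either lie on one of the three chosen lines or satisfy the equation $\gamma(h)=\alpha(g)+\beta(g+h)$, which over $\bbF_{p^k}$ is consistent only with the prescribed triples. The reverse direction is essentially a clean reduction once the sub-restriction isomorphism is identified, so the bulk of the proof is in making the forward linear representation precise.
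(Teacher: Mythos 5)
Your proposal is correct and follows essentially the same route as the paper: the composite case reduces to the embedded copy of $R_{\mathrm{cycle}}[m]$ together with Lemma \ref{Rembedding}, and the prime case rests on a linear representation of $L_p^k$ over a field containing $\bbF_{p^k}$, which the paper obtains by citing \cite[Theorem 4.1(a)]{b4} via Lemma \ref{hrembeds} and you instead make explicit with coordinates in $PG(2,p^k)$. Both of your directions check out, including the determinant verification (with the no-spurious-collinearity check) and the identification of the sub-copy of $R_{\mathrm{cycle}}[m]$ along the cyclic subgroup generated by $g_1$.
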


\begin{thm}\label{minimalmatroid}
Suppose that $L_p^k$ is embedded in a harmonic matroid $M$.  Then each $R_{\mathrm{cycle}}^k[p] \subseteq L_p^k$ is a submatroid of $L_p^k$ for which $\hi(R_{\mathrm{cycle}}^k[p])= \hi(L_p^k)$.  When $k=1$, $R_{\mathrm{cycle}}[p]$ is a minimal such matroid.
\end{thm}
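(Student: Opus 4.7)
The inclusion $\hi(R_{\mathrm{cycle}}^k[p]) \subseteq \hi(L_p^k)$ is immediate from monotonicity of $\hi$. For the reverse inclusion, since $\hi$ is a closure operator it suffices to show $L_p^k \subseteq \hi(R_{\mathrm{cycle}}^k[p])$; and because $R_{\mathrm{cycle}}^k[p]$ already contains $A \cup B \cup \{c_f : f \in G\}$, the real task is to produce $c_f$ for every $f \in (\bbZ_p)^k \setminus G$ via harmonic conjugation from $R_{\mathrm{cycle}}^k[p]$.

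The key step I plan to establish first is the following \emph{line lemma}: if $c_{f_1}, c_{f_2} \in \hi(R_{\mathrm{cycle}}^k[p])$ for distinct $f_1, f_2 \in (\bbZ_p)^k$, then $\{c_{f_1 + t(f_2 - f_1)} : t \in \bbZ_p\}$ lies in $\hi(R_{\mathrm{cycle}}^k[p])$. With $f_\Delta := f_2 - f_1$, form the submatroid
$$
N := L_p^k \big| \bigl(\{a_{tf_\Delta} : t \in \bbZ_p\} \cup \{b_{f_1 + tf_\Delta} : t \in \bbZ_p\} \cup \{c_{f_1}, c_{f_2}\} \cup \{D\}\bigr).
$$
The $a$-indices fill the subgroup $\langle f_\Delta\rangle$, the $b$-indices fill the coset $f_1 + \langle f_\Delta\rangle$, and both $c$-indices lie in $f_1 + \langle f_\Delta\rangle$, so the $L_p^k$ line condition $\{a_g, b_{g+h}, c_h\}$ is respected throughout; hence $N \cong R_{\mathrm{cycle}}[p]$. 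By Lemma \ref{mnembedding}, $N$ extends uniquely by harmonic conjugation in $M$ to an embedded $L_p$. But the natural restriction $L_p^k \big| \bigl(\{a_{tf_\Delta}, b_{f_1 + tf_\Delta}, c_{f_1 + tf_\Delta} : t \in \bbZ_p\} \cup \{D\}\bigr)$ is itself an $L_p$-extension of $N$ sitting inside $L_p^k \subseteq M$, and by uniqueness the two extensions coincide. Hence $c_{f_1 + tf_\Delta} \in \hi(N) \subseteq \hi(R_{\mathrm{cycle}}^k[p])$ for every $t \in \bbZ_p$.

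With the line lemma in hand, set $F := \{f \in (\bbZ_p)^k : c_f \in \hi(R_{\mathrm{cycle}}^k[p])\}$; then $G \subseteq F$, and the line lemma says $F$ is closed under affine lines in the $\bbZ_p$-affine space $(\bbZ_p)^k$. For odd $p$, iterated affine-line closure of any set recovers its affine span; since $0 \in G$ and $\{g_1, \ldots, g_k\}$ generates $(\bbZ_p)^k$ linearly, the affine span of $G$ is all of $(\bbZ_p)^k$, yielding $F = (\bbZ_p)^k$ and hence $\hi(R_{\mathrm{cycle}}^k[p]) = \hi(L_p^k)$. The case $p = 2$ requires separate handling since two-point lines do not enlarge $F$; the case $k = 1$ is trivial because $R_{\mathrm{cycle}}^1[2] = L_2^1$.

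For the minimality of $R_{\mathrm{cycle}}[p]$ when $k = 1$, every proper submatroid is contained in some single-element deletion $R_{\mathrm{cycle}}[p] \setminus \{x\}$, so by monotonicity it suffices to check $\hi(R_{\mathrm{cycle}}[p] \setminus \{x\}) \subsetneq \hi(L_p)$ for each $x$. If $x \in \{c_0, c_1\}$ the line $C$ retains only one ordinary point, so no harmonic configuration on $C$ can be formed and the missing $c_j$'s cannot appear on $C$; if $x \in A \cup B \cup \{D\}$, one tracks through the harmonic configurations used in the proof of Lemma \ref{mnembedding} to see that at least one necessary configuration becomes unavailable. I expect the main obstacle in the full proof to be the uniqueness step in the line lemma: one must pin down the harmonic conjugates produced from $N$ in $M$ as precisely the labelled points $c_{f_1 + tf_\Delta}$ of $L_p^k$ rather than a priori unknown points of $M$, and this identification rests on the uniqueness half of Lemma \ref{mnembedding} combined with the fact that $L_p^k$ itself furnishes a valid $L_p$-extension of $N$.
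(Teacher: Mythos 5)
Your argument for the equality $\hi(R_{\mathrm{cycle}}^k[p])=\hi(L_p^k)$ follows essentially the same route as the paper: its Proposition \ref{lpnembedding} likewise locates copies $R_p(\alpha,\delta,\gamma)$ of $R_{\mathrm{cycle}}[p]$ inside $L_p^k$ (your matroid $N$ is exactly such a copy, with $\delta=f_\Delta$ and $\gamma=f_1$), extends each by Lemma \ref{mnembedding}, and pins down the resulting harmonic conjugates as the labelled points $c_{\gamma+t\delta}$ via uniqueness of harmonic conjugates in $M$ --- precisely the identification you flag as the main obstacle, resolved the same way. Your iterated affine-line closure is somewhat more explicit bookkeeping than the paper's single union over $\gamma,\delta\in G$, and it has the virtue of making visible that $p=2$, $k>1$ is not actually covered (two-point ``lines'' generate nothing new); the paper is silent on that case as well.

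The genuine gap is in your minimality argument for $k=1$. To show $\hi(R_{\mathrm{cycle}}[p]\setminus\{x\})\subsetneq\hi(L_p)$ you argue that certain harmonic configurations ``become unavailable,'' but the unavailability of the configurations used in one particular construction does not show that no sequence of harmonic conjugations recovers the deleted point: the closure is an infinite iteration, later stages may use points created earlier, and a point generated as a conjugate on some other line can coincide with a missing $c_j$ (in $L_p$ the point $c_j$ lies on every line $\overline{a_ib_{i+j}}$, so it could in principle reappear as a conjugate computed along one of those lines rather than along $C$). A first-step obstruction therefore proves nothing about the closure. The paper avoids this entirely with a representability obstruction: by Gordon's theorem every proper minor of $R_{\mathrm{cycle}}[p]$ embeds in a full algebraic matroid over $\bbQ$, which is harmonic; if the harmonic closure of such a minor equalled $\hi(L_p)$ it would contain $L_p$, which is algebraically representable only in characteristic $p$ --- a contradiction. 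Note also that this frames minimality as the existence of \emph{some} harmonic ambient matroid in which the closure of the proper submatroid is smaller, which is all the statement requires; your approach attempts the much stronger (and here unproved) claim that it is smaller in \emph{every} ambient $M$.
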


\begin{qn}\label{Q:minimalmatroid}
What are the minimal submatroids $N$ of $R_{\mathrm{cycle}}^k[p]$ such that $\hi(N) = \hi(L_p^k)$?  It seems improbable that $R_{\mathrm{cycle}}^k[p]$ is minimal for $k>1$.
\end{qn}

We begin the proofs with the following lemma.

\begin{lem}\label{hrembeds}
The higher Reid cycle matroid $R_{\mathrm{cycle}}^k[p]$ is linearly and algebraically representable in characteristic $p$, and therefore embeds in some harmonic matroid, for every $k>0$.
\end{lem}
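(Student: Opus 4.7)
The plan is to exhibit a single linear representation of $L_p^k$ over the finite field $\bbF_{p^k}$; since $R_{\mathrm{cycle}}^k[p]$ is by definition a restriction of $L_p^k$, this will give linear representability of $R_{\mathrm{cycle}}^k[p]$ in characteristic $p$ at once, algebraic representability will follow by the standard passage from linear to algebraic, and the embedding into a harmonic matroid will come for free because the ambient Desarguesian plane $\pp(p^k)$ is little Desargues (as recalled in Section \ref{sec:harmonicmatroids}) and hence a harmonic matroid.

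Concretely, I would identify the additive group $(\bbZ_p)^k$ with the additive group of $\bbF_{p^k}$ and, in $\pp(p^k)$ with homogeneous coordinates $(x:y:z)$, assign
\[
D=(0:0:1), \qquad a_g=(1:0:-g), \qquad b_g=(0:1:g), \qquad c_g=(1:1:g),
\]
for every $g\in\bbF_{p^k}$. The special lines $A$, $B$, $C$ then lie on $y=0$, $x=0$, and $x-y=0$ respectively, all three passing through $D$. A $3\times3$ determinant computation confirms the collinearity of $\{a_g, b_{g+h}, c_h\}$ for every $g, h\in\bbF_{p^k}$, matching the defining rank-$2$ flats of $L_p^k$. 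A short case analysis over the remaining triples of types (two points on a single special line, and mixed triples such as two $a$-points with a $b$-point) shows that no extraneous dependencies are created, so the assignment is a faithful matroid embedding rather than a mere surjection onto the specified dependencies.

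Restricting the representation to the point set $A\cup B\cup\{c_f:f\in G\}$ yields the desired linear representation of $R_{\mathrm{cycle}}^k[p]$ over $\bbF_{p^k}$; linear representability implies algebraic representability by the standard device of sending each vector to the corresponding linear form in algebraically independent indeterminates; and the containment in the Desarguesian plane $\pp(p^k)$ delivers the embedding into a harmonic matroid. The only genuinely creative step is choosing the linear parameters so that the identity $\gamma_h = \alpha_g + \beta_{g+h}$, forced by the $3\times3$ determinant, holds uniformly in $g$ and $h$; the choice $\alpha_g=-g$, $\beta_g=g$, $\gamma_g=g$ works because the gain group $(\bbZ_p)^k$ sits naturally inside the additive group of $\bbF_{p^k}$, so gains and scalars live in the same field. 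Everything else is mechanical verification.
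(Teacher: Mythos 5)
Your proposal is correct and follows essentially the same route as the paper: linear representability over a field of characteristic $p$ whose additive group contains $(\bbZ_p)^k \cong \bbF_{p^k}^+$, then algebraic representability by the standard passage from linear to algebraic, then an embedding into a harmonic matroid via the ambient Desarguesian plane (the paper additionally notes full algebraic matroids of characteristic $p$). The only difference is that you write out the coordinate representation and the determinant check explicitly, whereas the paper simply cites this representability from \cite[Theorem 4.1(a)]{b4}; your explicit coordinates are a valid instance of that result.
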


\begin{proof}
The matroid $R_{\mathrm{cycle}}^k[p]$ is linearly representable in the vector space $\bF^3$ over any field $\bF$ that contains $\mathbb{F}_{p^k}$, because $\fG \cong \mathbb{F}_{p^k}^+ \subseteq \bF^+$, the additive group of the field \cite[Theorem 4.1(a)]{b4}.  
Therefore, it embeds in two kinds of harmonic matroid: every projective space of dimension $2$ or greater over such a field, and consequently also in every full algebraic matroid of characteristic $p$ and transcendence degree at least equal to $3 = \text{rank} (R_{\mathrm{cycle}}^k[p])$ \cite[Proposition 6.7.10]{Oxley}.
\end{proof}

For use in the next proposition we identify the Reid cycle matroids contained in $L_p^k$.

\begin{construction}[Reid Cycle Submatroids]\label{subreid}
A Reid cycle submatroid $R_{\mathrm{cycle}}[p] \subseteq L_p^k$ must have the following form: a point set $A_1 \cup B_1 \cup C_1$ where $A_1 = \{a_{\alpha_i} \mid i \in \bbZ_p\} \cup \{D\}$, $B_1 = \{b_{\beta_j} \mid j \in \bbZ_p\} \cup \{D\}$, and $C_1 = \{c_\gamma, c_{\gamma'}, D\}$, where $\alpha_i, \beta_j, \gamma, \gamma' \in \fG$; and lines $\{a_{\alpha_i},b_{\beta_i},c_{\gamma}\}$ and $\{a_{\alpha_i},b_{\beta_{i+1}},c_{\gamma'}\}$ as well as $A_1$, $B_1$, and $C_1$.  (The point indexing can always be chosen this way because the index set $\bbZ_p$ is prime cyclic; we omit the details.)  The 3-point lines mean that $\beta_i = \alpha_i +\gamma$ and $\beta_{i+1} = \alpha_i +\gamma'$.  It follows that $\alpha_i +\gamma = \alpha_{i-1} +\gamma'$, so $\alpha_i = \alpha_{i-1} +\gamma' -\gamma = \alpha_0 +i(\gamma' -\gamma)$.  Let $\delta = \gamma' -\gamma$, so that $\gamma' = \delta +\gamma$.  Then $\alpha_i = \alpha + i\delta$ (where $\alpha$ is short for $\alpha_0$) and $\beta_i = \alpha + i\delta + \gamma$.  
We denote this submatroid by $R_p(\alpha,\delta,\gamma)$; see Figure \ref{F:Rp}.

\begin{figure}[htbp]
\begin{center}
\includegraphics[scale=.8]{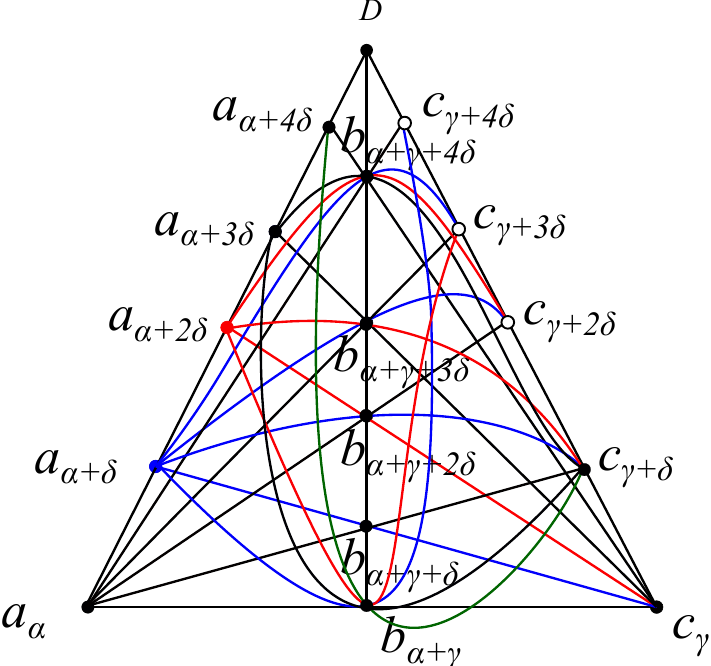}
\caption{The submatroid $R_p(\alpha,\delta,\gamma)$ (solid dots) and its harmonic extension $\bar R_p(\alpha,\delta,\gamma)$ (with the hollow dots).}
\label{F:Rp}
\end{center}
\end{figure}

Our description means there is a cyclic subgroup $\langle\delta\rangle = \{i\delta  \mid i \in \bbZ_p\} \cong \bbZ_p$, such that $\{\alpha_i\}_i = \alpha + \langle\delta\rangle$ and $\{\beta_i\}_i = \gamma +\alpha +\langle\delta\rangle$; that is, the group elements that label $A_1$ and $B_1$ are cosets of $\langle\delta\rangle$.  The choice of the parameters $\alpha, \gamma, \delta \in \fG$ is arbitrary as long as $\delta \neq 0$.  Every Reid cycle submatroid of $L_p^k$ has this form (up to changes of notation) if $p^k>2$ because the lines $A_1$ and $B_1$, having more than 3 points, must be contained in long lines of $L_p^k$.  (We note that the same Reid cycle submatroid may be constructed from different sets of parameters.)  
\end{construction}

\begin{prop}  \label{lpnembedding} Any embedding of
$R_{\mathrm{cycle}}^k[p]$ in a harmonic matroid $M$ extends uniquely by
harmonic conjugation  to an embedding of $L_p^k$ in $M$.
\end{prop}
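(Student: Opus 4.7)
The plan is to extend the embedding of $R_{\mathrm{cycle}}^k[p]$ step by step by repeatedly invoking Lemma~\ref{mnembedding} (the $k=1$ case) on Reid cycle submatroids identified by Construction~\ref{subreid}. The $C$-line on which the new $c$-points will lie is available from the start because $\{c_f : f \in G\} \cup \{D\}$ is a rank-$2$ flat of $R_{\mathrm{cycle}}^k[p]$, so its closure in $M$ is a line of $M$.

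First, for each generator $g_i$ and each $\alpha \in \fG$, the subset $R_p(\alpha, g_i, 0)$ of Construction~\ref{subreid} is a Reid cycle submatroid of $R_{\mathrm{cycle}}^k[p]$, because $A$ and $B$ are present in full and both $c_0$ and $c_{g_i}$ belong to $R_{\mathrm{cycle}}^k[p]$. Lemma~\ref{mnembedding} extends it uniquely to a copy of $L_p$ in $M$, producing the new points $c_{jg_i}$ for $j \in \{2,\dots,p-1\}$ on the $C$-line together with the three-point lines $\{a_{\alpha + ig_i}, b_{\alpha + (i+j)g_i}, c_{jg_i}\}$. Uniqueness of the harmonic conjugate in $M$ makes $c_{jg_i}$ independent of $\alpha$, and varying $\alpha$ over a transversal of $\langle g_i \rangle$ produces every three-point line $\{a_g, b_{g+jg_i}, c_{jg_i}\}$ with $g \in \fG$. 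I would then iterate: whenever $c_\gamma$ and $c_{\gamma+\delta}$ are already available in the current extension, the corresponding Reid cycle submatroids $R_p(\alpha, \delta, \gamma)$ yield, via Lemma~\ref{mnembedding}, all $c_{\gamma + i\delta}$ for $i \in \bbZ_p$ along with the required three-point lines.

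The central closure claim to be proved is that the smallest $F \subseteq \fG$ containing $G$ and closed under the operation $(\gamma, \gamma+\delta) \mapsto \{\gamma + i\delta : i \in \bbZ_p\}$ equals $\fG$. For odd $p$ this is an elementary exercise: $0 \in F$ forces $\{ix : i \in \bbZ_p\} \subseteq F$ for each $x \in F$, and the midpoint $(x+y)/2$ of any pair $x, y \in F$ is in $F$ (invertibility of $2$ in $\bbZ_p$ is used); together these operations generate the linear span of $F$, which contains $G$ and therefore equals $\fG$. Uniqueness of the whole extension then follows from the uniqueness in Lemma~\ref{mnembedding} and the uniqueness of harmonic conjugates in $M$.

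The main obstacle I anticipate is this closure step, together with the bookkeeping needed to confirm that every required three-point line appears, not merely every point $c_f$. The prime $p = 2$ is also genuinely different, since harmonic conjugation is trivial in characteristic $2$ and contributes no new points; for this case one must instead identify each missing $c_f$ as the unique intersection of the $C$-line with $\overline{a_g b_{g+f}}$ for some (equivalently, any) $g$, using coplanarity already supplied by the structure of $R_{\mathrm{cycle}}^k[2]$ and then verifying consistency of all the three-point lines.
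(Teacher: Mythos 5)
Your proposal follows the same basic route as the paper's proof: identify copies of $R_{\mathrm{cycle}}[p]$ inside $R_{\mathrm{cycle}}^k[p]$ (the submatroids $R_p(\alpha,\delta,\gamma)$ of Construction \ref{subreid}), extend each to a copy of $L_p$ by Lemma \ref{mnembedding}, use uniqueness of harmonic conjugates in $M$ to see that the new points $c_{\gamma+t\delta}$ depend only on $\gamma$ and $\delta$, and vary $\alpha$ over coset representatives of $\langle\delta\rangle$ to collect the required three-point lines. Where you differ is in the final assembly, and your version is the more careful one. The paper performs a single round of extensions, taking the union over $\gamma,\delta\in G$ of the index sets $\{\gamma+t\delta : t\in\bbZ_p\}$, and asserts that this already yields every $c_\omega$ with $\omega\in\fG$; but that one-round union is $\bigcup_i\langle g_i\rangle\cup\bigcup_{i,j}(g_i+\langle g_j\rangle)$, which for $k\ge2$ omits indices such as $2g_1+2g_2$ once $p\ge5$. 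Iteration is genuinely needed, with $\gamma$ and $\gamma+\delta$ allowed to range over indices already constructed, and your closure argument (the smallest subset of $\fG$ containing $G$ and closed under $(\gamma,\gamma+\delta)\mapsto\gamma+\bbZ_p\delta$ is closed under dilation through $0$ and under midpoints, hence under addition, hence is the span of $G$) supplies exactly that missing step for odd $p$. The line-by-line bookkeeping you flag is handled in the paper by the explicit flat lists $\cF(\alpha,\delta,\gamma)$ and is routine.

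Your worry about $p=2$ is also legitimate, and the paper's proof does not resolve it: when $p=2$ the extension of Lemma \ref{mnembedding} contributes no new points at all ($\bbZ_2\setminus\{0,1\}=\emptyset$, and the harmonic conjugate coincides with the original point wherever a Fano configuration is present), so iterated harmonic conjugation cannot manufacture $c_f$ for $f\in\fG\setminus G$ when $k\ge2$. Your proposed repair, obtaining $c_f$ as the intersection of the $C$-line with $\overline{a_0b_f}$, is not harmonic conjugation in the sense of $\hi$, and it is not automatically available either, since two coplanar lines in a general harmonic matroid need not meet in a point of the matroid. So for $p=2$, $k\ge2$ the statement requires either a separate argument or a restriction; you are right to isolate this case rather than fold it into the general induction.
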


\begin{proof}  By Lemma \ref{mnembedding} we may assume $k>1$.  Let $\alpha, \gamma, \delta$ be three fixed elements of $\fG$ with $\delta \ne 0$.  

We consider the simple matroid $R_p(\alpha,\delta,\gamma) \cong R_{\mathrm{cycle}}[p]$ of rank 3 on the point set 
$$E(R_p(\alpha,\delta,\gamma)) = \{a_{\alpha +t\delta}, b_{\gamma+\alpha +t\delta} \mid t\in \bbZ_p\} \cup \{ c_{\gamma}, c_{\gamma+\delta}, D \}.$$
Because $R_p(\alpha,\delta,\gamma)$ is a submatroid of $R_{\mathrm{cycle}}^k[p]$, it embeds in $M$. 
By Lemma \ref{mnembedding}, $R_p(\alpha,\delta,\gamma)$ extends by harmonic conjugation to an
embedding $\bar R_p(\alpha,\delta,\gamma)$ of $L_p$ in $M$, where the new points are denoted by
$c_{\gamma + t\delta}$ for $t \in \bbZ_p \setminus \{0,1\}$.
The existence of these new points does not depend on the choice of $\alpha$, but only on $\gamma$ and $\delta$, as we can see from their subscripts.
Moreover, the points satisfy $\HP (c_{\gamma \delta^{j}}, D;\ c_{\gamma + (j-1)\delta}, c_{\gamma + (j+1)\delta} )$ 
for every $ j \in \bbZ_p$ (as one can see in Figure \ref{F:Rp}), which shows directly that $\bar R_p(\alpha,\delta,\gamma)$ extends $R_p(\alpha,\delta,\gamma)$ by harmonic conjugation.

Identifying each point $a_i$, $b_i$ and $c_i$ in $L_p$ with
$a_{\alpha +i\delta}$, $b_{\gamma+\alpha +i\delta}$, $c_{\gamma +i\delta}$,
respectively, we can verify that the set of nontrivial flats of this
extension $\bar R_p(\alpha,\delta,\gamma)$ is
\begin{align*}\label{flats}
\cF(\alpha,\delta,\gamma)=\big\{ &\{ a_{\alpha +t\delta} \mid t\in \bbZ_p \} \cup \{ D \},
\{ c_{\gamma +t\delta} \mid t\in \bbZ_p \} \cup \{ D
\}, \\&\quad
\{ b_{\gamma +\alpha +t\delta} \mid t\in \bbZ_p \} \cup \{ D \},
\{a_{\alpha + i\delta}, b_{\gamma +\alpha +(i+j)\delta}, c_{\gamma +j\delta} \}
\text{ for } i, j \in \bbZ_p \big\}.
\end{align*}

Varying $\alpha$ in $\fG$ we obtain a family of matroids, $\{ R_p(\alpha,\delta,\gamma) \}_\alpha$.  
The matroid $M_p^k(\gamma, \delta)$ defined on ground set 
$$\bigcup_{\alpha \in \fG} E(\bar R_p(\alpha,\delta,\gamma)) = E (R_{\mathrm{cycle}}^k[p] ) \cup \{ c_{\gamma +t\delta} \mid t \in \bbZ_p \}$$ 
with the set of nontrivial flats
\begin{align*}
\cF^k(\gamma, \delta) 
&=\bigcup_{\alpha \in \fG} \cF(\alpha,\delta,\gamma)
= \{F \mid F \text{ is a flat of } R_p(\alpha,\delta,\gamma) \text{ for some } \alpha \in \fG \} 
\end{align*}
is a (harmonic) extension of the matroid $R_{\mathrm{cycle}}^k[p]$ and a submatroid of $L_p^k$.

The matroid $M_p^k$ with the ground set
$$
\bigcup_{\gamma, \delta \in G} E\big(M_p^k(\gamma,\delta)\big) =
E \big(R_{\mathrm{cycle}}^k[p] \big) \cup 
\{ c_{\omega} \mid \omega \in \fG\setminus G \}=A\cup B \cup C
$$
in the harmonic matroid $M$, defined by the set of nontrivial flats
\begin{align*}
\bigcup_{\gamma, \delta \in G} \cF^k(\gamma, \delta) &= \{F \mid F \text{ is
a flat of } M_p^k(\gamma,\delta) \text{ for some }\gamma, \delta \in G \}
\\
&=\big\{ \{ a_{\alpha} \mid \alpha \in \fG
\} \cup \{ D \}, \{ c_{\alpha} \mid \alpha \in \fG \} \cup \{D \},  \{
b_{\alpha} \mid \alpha \in \fG \} \cup \{ D \}, \\
&\qquad \{a_{\alpha}, b_{\alpha +\gamma+\delta}, c_{\gamma+\delta} \} \text{ for } \alpha, \gamma, \delta \text{ in } \fG \big\},
\end{align*}
is a (harmonic) extension of $R_{\mathrm{cycle}}[p]$ in $M$.  Its points and flats are the points and flats of $L_p^k$.  Thus, $M_p^k = L_p^k$, which proves the proposition.
\end{proof}

If $L_p^k$ is embedded in a harmonic matroid, then any submatroid $R_{\mathrm{cycle}}^k[p]$ naturally is also embedded.  The following corollary follows from Proposition \ref{lpnembedding} and the fact that $\hi$ is a closure operator.

\begin{cor}\label{MinimalMatroid} Suppose that $R_{\mathrm{cycle}}^k[p]$ is embedded in a harmonic matroid
$M$.  Then $\hi(R_{\mathrm{cycle}}^k[p])= \hi( L_p^k)$.
\end{cor}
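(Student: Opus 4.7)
The plan is to exploit Proposition \ref{lpnembedding}, which does essentially all of the substantive work, and then close things up using the formal properties of $\hi$ as a closure operator. Proposition \ref{lpnembedding} asserts that any embedding of $R_{\mathrm{cycle}}^k[p]$ into a harmonic matroid $M$ extends uniquely by iterated harmonic conjugation to an embedding of $L_p^k$ in $M$. In particular, every point of $L_p^k$ is obtained from points of $R_{\mathrm{cycle}}^k[p]$ by a finite sequence of harmonic conjugations performed inside $M$. Translating this into closure language, I would record the inclusion $L_p^k \subseteq \hi(R_{\mathrm{cycle}}^k[p])$.

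From this containment the conclusion follows in two short steps. First, applying $\hi$ to both sides and using monotonicity together with idempotence ($\hi \circ \hi = \hi$, since $\hi$ is a closure operator, as observed just before Lemma \ref{modmobius}) yields $\hi(L_p^k) \subseteq \hi\bigl(\hi(R_{\mathrm{cycle}}^k[p])\bigr) = \hi(R_{\mathrm{cycle}}^k[p])$. Second, because $R_{\mathrm{cycle}}^k[p] \subseteq L_p^k$ by construction, monotonicity of $\hi$ gives the reverse inclusion $\hi(R_{\mathrm{cycle}}^k[p]) \subseteq \hi(L_p^k)$. Combining the two containments yields the equality claimed in Corollary \ref{MinimalMatroid}.

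I do not anticipate any obstacle here. The genuine content, namely the verification that the harmonic extensions $\bar R_p(\alpha,\delta,\gamma)$ assemble across all parameter choices $(\alpha,\delta,\gamma) \in \fG^3$ with $\delta \neq 0$ to recover every point and every nontrivial flat of $L_p^k$, was already packaged inside the proof of Proposition \ref{lpnembedding} in the preceding pages. The corollary is thus a two-line consequence; the only thing to be careful about is to cite both the existence half of Proposition \ref{lpnembedding} (for the forward inclusion) and the closure-operator properties of $\hi$ (for extracting the equality from a single inclusion).
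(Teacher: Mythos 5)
Your proposal is correct and follows exactly the route the paper takes: the paper also derives this corollary directly from Proposition \ref{lpnembedding} together with the closure-operator properties of $\hi$, merely stating this without spelling out the two inclusions. Your write-up simply makes explicit the monotonicity and idempotence steps that the paper leaves implicit.
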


 \begin{proof}[Proof of Theorem \ref{hrprimenon}]
Because $R_{\mathrm{cycle}}^k[m]$ contains a copy of $R_{\mathrm{cycle}}[m]$, it can only be linearly or algebraically representable when $R_{\mathrm{cycle}}[m]$ is.  By Lemma \ref{Rembedding}, the latter is not embeddable when $m$ is a composite number.  That rules out embedding of $R_{\mathrm{cycle}}^k[m]$ unless $m$ is prime.  When $m$ is prime we call upon Lemma \ref{hrembeds}.
 \end{proof}

\begin{proof}[Proof of Theorem \ref{minimalmatroid}] 
From Proposition \ref{lpnembedding}, Corollary \ref{MinimalMatroid}, and Theorem \ref{ThmHarmonicPR} we conclude that $\hi(R_{\mathrm{cycle}}^k[p])= \hi( L_p^k)$. 

Regarding $R_{\mathrm{cycle}}[p]$, Gordon \cite[Proof of Proposition 12]{Gordon} proved that any proper minor embeds in a full algebraic matroid over the rational numbers $\bbQ$.  (He derived this from the similar result of Lindstr\"om for linear representability.)  Since a full algebraic matroid is harmonic, $\hi(R)$ is algebraically representable over $\bbQ$.  If $\hi(R) = \hi(R_{\mathrm{cycle}}[p])$, then it equals $\hi(L_p)$, which contains $L_p$, so $L_p$ would be algebraically representable over $\bbQ$.  However, $L_p$ is algebraically representable only over fields of characteristic $p$ \cite[Theorem 7]{rflc}.  This contradiction proves that $\hi(R)$ is smaller than $\hi(L_p)$.
\end{proof}

Define the \emph {algebraic characteristic set} $\chi_{A} (M)$ to be the set of field characteristics over which the matroid $M$ is algebraic. 

\begin{prop} \label{CharSet}  The algebraic characteristic sets $\chi_{A}(\hi(L_p^k))=R_{\mathrm{cycle}}^k[p]=\{p\}$.
\end{prop}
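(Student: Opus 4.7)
The statement (correcting the evident typo) asserts $\chi_A(\hi(L_p^k)) = \chi_A(R_{\mathrm{cycle}}^k[p]) = \{p\}$. My plan is to prove two inclusions for each matroid, leveraging the chain of submatroids $R_{\mathrm{cycle}}[p] \subseteq R_{\mathrm{cycle}}^k[p] \subseteq L_p^k \subseteq \hi(L_p^k)$ together with the representability results already established.

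\textbf{The inclusion $\{p\} \subseteq \chi_A$.} For $R_{\mathrm{cycle}}^k[p]$ this is precisely the content of Lemma \ref{hrembeds}, which exhibits an embedding into any full algebraic matroid of characteristic $p$ of transcendence degree at least $3$. For $\hi(L_p^k)$ I would combine this with Proposition \ref{lpnembedding}: pick such a full algebraic matroid $M$ of characteristic $p$, which is harmonic; embed $R_{\mathrm{cycle}}^k[p]$ in $M$ and extend uniquely by harmonic conjugation to an embedding of $L_p^k$ in $M$; then $\hi(L_p^k) \subseteq M$, and since restricting an algebraic embedding of $M$ to a subset of its ground set gives an algebraic embedding of the corresponding submatroid, $\hi(L_p^k)$ is algebraically representable in characteristic $p$.

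\textbf{The inclusion $\chi_A \subseteq \{p\}$.} Here the essential point is that both matroids contain a copy of $L_p$ ``after harmonic closure.'' Suppose $R_{\mathrm{cycle}}^k[p]$ is algebraically representable over a field of characteristic $q$. By Construction \ref{subreid}, $R_{\mathrm{cycle}}^k[p]$ contains a copy of $R_{\mathrm{cycle}}[p]$, so restricting the embedding gives an algebraic embedding of $R_{\mathrm{cycle}}[p]$ into a full algebraic matroid $M'$ of characteristic $q$. Since $M'$ is harmonic, Lemma \ref{mnembedding} extends this uniquely by harmonic conjugation to an embedding of $L_p$ in $M'$. Thus $L_p$ is algebraically representable over characteristic $q$; by the result cited in the proof of Theorem \ref{minimalmatroid} (\cite[Theorem 7]{rflc}), this forces $q = p$. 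For $\hi(L_p^k)$ the argument is even more direct: $L_p \subseteq L_p^k \subseteq \hi(L_p^k)$, so any algebraic representation in characteristic $q$ restricts to one of $L_p$, again forcing $q = p$.

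The proof is essentially a bookkeeping combination of previously established machinery, so there is no serious obstacle. The only point requiring a moment's care is the downward inheritance of algebraic representability: from the definition of algebraic embedding recalled in Section \ref{sec:harmonicmatroids}, the restriction of an algebraic embedding $f : M \to \bK$ to the ground set of any submatroid $N \subseteq M$ is automatically an algebraic embedding of $N$, which is what makes the restriction arguments above go through.
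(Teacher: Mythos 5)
Your proof is correct and follows essentially the same route as the paper: both directions sandwich the matroids in the chain $R_{\mathrm{cycle}}[p] \subseteq R_{\mathrm{cycle}}^k[p] \subseteq L_p^k \subseteq \hi(L_p^k)$, obtain $p \in \chi_A$ from linear (hence algebraic) representability in characteristic $p$ plus the fact that full algebraic matroids are harmonic, and obtain $\chi_A \subseteq \{p\}$ by restricting to a small submatroid with known characteristic set. The only cosmetic difference is in the upper bound, where the paper cites Gordon's result $\chi_A(R_{\mathrm{cycle}}[p])=\{p\}$ directly, whereas you re-derive it by extending $R_{\mathrm{cycle}}[p]$ to $L_p$ via Lemma \ref{mnembedding} and invoking \cite[Theorem 7]{rflc}; both are legitimate.
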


\begin{proof} 
From \cite{b4} we know that $L_p^k$ is linearly representable over a field of characteristic $p$. Therefore, by \cite[Proposition 6.7.10]{Oxley}, $L_p^k$ is algebraically representable over a field of characteristic $p$. Consequently, it embeds in a full algebraic matroid $M$ over a field of characteristic $p$, which is a harmonic matroid by Lindstr\"om \cite{lhc}.  We deduce that $\hi(L_p^k)$ is algebraically representable over a field of characteristic $p$.  This proves that $p \in \chi_{A}(\hi(L_p^k))$.

On the other hand, from \cite[Theorem 2]{Gordon} we know that $\chi_{A}(R_{\mathrm{cycle}}[p])=\{p\}$. Since $R_{\mathrm{cycle}}[p] \subseteq R_{\mathrm{cycle}}^k[p] \subseteq \hi(L_p^k)$, we obtain the desired conclusion.
\end{proof}

\sectionpage


\begin{thebibliography}{99}

\bibitem{rflc}
Rigoberto~Fl\'orez, Lindstr\"om's conjecture on a class of algebraically non-representable matroids. 
\emph{Europ. J. Combin.} {\bf 27} (2006), 896--905.

\bibitem{rfhc} Rigoberto~Fl\'orez, Harmonic conjugation in harmonic matroids. 
\emph{Discrete Math.} {\bf 309} (2009), 2365--2372.

\bibitem{pr1}
Rigoberto~Fl\'orez and Thomas~Zaslavsky, Projective rectangles:  A new kind of incidence structure.  Submitted.  arXiv:2307.04079.

\bibitem{Gordon}
Gary~Gordon, Algebraic characteristic sets of matroids. \emph{J. Combin. Theory Ser. B} {\bf 44} (1988), 64--74.

\bibitem{HP}
Daniel~R.~Hughes and Fred~C.~Piper,  \emph{Projective Planes}.
Grad.\ Texts in Math., Vol.\ 6.  Springer-Verlag, New York, 1973.
MR 48 \#12278.  Zbl 267.50018.

\bibitem{jk}
Joseph~P.~S.~Kung, Extremal matroid theory. \emph {Graph Structure Theory}
(Seattle, Wash., 1991), pp.\ 21--61. Contemp. Math., Vol. 147. 
Amer. Math. Soc., Providence, R.I., 1993.

\bibitem{ldt}
Bernt~Lindstr\"om, A Desarguesian theorem for algebraic combinatorial geometries. 
\emph{Combinatorica} {\bf 5} (1985), no. 3, 237--239.

\bibitem{lhc}
Bernt~Lindstr\"om, On harmonic conjugates in full algebraic combinatorial geometries. 
\emph{Europ. J. Combin.} {\bf 7} (1986), 259--262.

\bibitem{Oxley} J.G.\ Oxley, \emph{Matroid Theory}.  
Second ed.
Oxford Univ.\ Press, Oxford, 2011.

\bibitem{st}
F.~W.~Stevenson, \emph{Projective Planes}. W.~H.~Freeman, San Francisco, 1972.

\bibitem{b2}
Thomas~Zaslavsky, Biased graphs. II. The three matroids. 
\emph{J. Combin. Theory Ser. B} {\bf 51} (1991), 46--72.

\bibitem{b4}
Thomas~Zaslavsky, Biased graphs IV: Geometrical realizations.
\emph{J. Combin. Theory Ser. B} {\bf 89} (2003), 231--297.

\end{thebibliography}
\end{document}